\numberwithin{equation}{section}
\DeclareMathOperator{\vdiv}{div}
\newcommand{\dualp}[1]{\left\langle #1 \right\rangle} 
\DeclareMathOperator*{\argmin}{argmin}
\DeclareMathOperator*{\argmax}{argmax}
\newcommand{\dist}{\mathop{\rm max\,dist}}
\newcommand{\truth}{\mathcal{N}}
\newcommand{\outn}{H_b}
\newcommand{\bHs}{\bar{Y}_\mu}
\newcommand{\opOut}{\bar{B}_\mu}
\newcommand{\bo}{B}
\newcommand{\bbo}{\boldsymbol{B}}
\newcommand{\sur}{R_n(\mu)}
\newcommand{\mnew}{\hat{X}_\mu}
\newcommand{\hf}{V}
\newcommand{\mf}{W}
\newcommand{\oldHs}{Y_\mu}
\newcommand{\Ps}{P_{\oldHs,\HH_n}}
\newcommand{\HH}{Y}
\newcommand{\MM}{X}
\newcommand{\MnOne}{\MM_n^1}
\newcommand{\e}{\epsilon}
\newcommand{\lsim}{\raisebox{-1ex}{$~\stackrel{\textstyle<}{\sim}~$}}
\renewcommand{\t}{\tilde}
\newcommand{\N}{\mathbb{N}}
\newcommand{\R}{\mathbb{R}}
\newtheorem{lemma}{Lemma}[section]
\newtheorem{prop}[lemma]{Proposition}
\newtheorem{cor}[lemma]{Corollary}
\newtheorem{theorem}[lemma]{Theorem}
\newtheorem{rem}[lemma]{Remark}
\newtheorem{definition}[lemma]{Definition}
\newtheorem{assumption}[lemma]{Assumption}
\newtheorem{summary}[lemma]{Summary}
\begin{document}
\title{Double Greedy Algorithms: Reduced Basis Methods for Transport Dominated Problems}
\thanks{This work has been supported in
part by the DFG Special Priority Program SPP-1324, by the DFG SFB-Transregio 40, and by the DFG Research Group 1779}
\author{Wolfgang Dahmen}\address{Institut f\"ur Geometrie und Praktische Mathematik, RWTH Aachen, Germany, e-mail:  dahmen@igpm.rwth-aachen.de}
\author{Christian Plesken}\address{Institut f\"ur Geometrie und Praktische Mathematik, RWTH Aachen, Germany, e-mail: plesken@igpm.rwth-aachen.de}
\author{Gerrit Welper}\address{Institut f\"ur Geometrie und Praktische Mathematik, RWTH Aachen, Germany, e-mail: welper@igpm.rwth-aachen.de}
\date{February 20, 2013}
\begin{abstract}
The central objective of this paper is to develop reduced basis methods for parameter dependent
 transport dominated problems
that are rigorously proven to exhibit rate-optimal performance when compared with the Kolmogorov $n$-widths
of the solution sets. The central ingredient is the construction of computationally feasible ``tight'' surrogates
which in turn are based on deriving a suitable well-conditioned variational formulation for the parameter dependent
problem. The theoretical results are illustrated by numerical experiments for convection-diffusion and pure transport
equations. In particular, the latter example sheds some light on the smoothness of the dependence of the solutions on the parameters.
\end{abstract}
\subjclass{65J10, 65N12, 65N15, 35B30}
\keywords{Tight surrogates, stable variational formulations, saddle point problems, double greedy schemes, greedy stabilization,
rate-optimality, transport equations, convection-diffusion equations.}
\maketitle
\section{Introduction}\label{intro}
Over the past few years {\em model order reduction}
has become an indispensable constituent of large scale design or optimization problems. In particular, the {\em Reduced Basis Method} (RBM) is perhaps by now one of the most
important paradigms   for highly complex  {\em frequent query problems}
  involving parameter dependent PDEs, see e.g. \cite{RHP,SVHDNP, PR}.
Among other things, at least under certain   circumstances,
modeling errors are rigorously controlled and can be upgraded if necessary. 

While the development of RBMs has been a very active area with impressive success stories in
by now a variety of important application fields, it is fair to say that a theoretical underpinning
of what one might call ``near-optimal  performance''  - in a sense to be made precise later -   is still confined to a 
relatively narrow problem class. The central purpose of this paper is therefore to extend the scope
of problems for which RBMs can be developed and rigorously proven to perform in that near optimal sense.
The focus of the present work is on performance in terms of the {\em accuracy} offered by the reduced  model, roughly speaking, centering around the question how to ensure any certified target tolerance
 of the reduced model by a possibly small number of reduced basis functions, of course,
 always insisting on the standard {\em offline-online}  division of
 the overall computational work.
 
\subsection{General Framework}\label{sect1.1}
Suppose that $B_\mu:X\to Y'$, $\mu\in \mathcal{P}$, is a family of (linear) operators
from a Hilbert space $X$ onto the dual $Y'$ of another Hilbert space $Y$, depending on parameters $\mu$
from a compact set $\mathcal{P}\subset \R^p$. Under appropriate conditions on $\{B_\mu\}_{\mu\in\mathcal{P}}$  the solution set
\begin{equation}
\mathcal{M} := \{p(\mu)=B_\mu^{-1}f: \mu\in\mathcal{P}\} \subset X
  \label{eq:solution-set}
\end{equation}
for the family of operator equations 
\begin{equation}
\label{opeq}
B_\mu p(\mu)=f,\quad \mu\in \mathcal{P},
\end{equation} 
is a compact subset of $X$. In the context of frequent query problems, like steering a functional $\ell(p(\mu))$ of the solution towards a 
target value,
RBMs try to exploit the fact that $\mathcal{M}$ may be a very thin subset of $X$. In fact, compactness of $\mathcal{M}$
means that
  the {\em Kolmogorov $n$-widths}
\begin{equation}
\label{Kolm}
d_n(\mathcal{M})_X := \inf_{{\rm dim}\,V\leq n} \dist\,(\mathcal{M},V)_X,
\end{equation}
 tend to zero as $n\to \infty$, where $V$ is taken from the set of {\em all} $n$-dimensional subspaces of $X$ and
\begin{equation*}
    \dist\,(\mathcal{M},X_n)_X := \sup_{p\in \mathcal{M}}\inf_{q\in X_n}\|p-q\|_X. 
\end{equation*}
The objective is then to construct
(problem dependent) subspaces $X_n\subset X$ of possibly small dimension $n$ such that for a given 
{\em target accuracy} ${\rm tol}$, say, 
\begin{equation}
\label{certify}
\dist\,(\mathcal{M},X_n)_X \leq {\rm tol}
\end{equation}
is {\em guaranteed} to hold. 
In particular, this implies that
for any $p\in \mathcal{M}$ and any bounded linear functional $\ell\in X'$, a trivial estimate immediately gives   $|\ell(p)-\ell(P_{X,X_n}p)|\leq \|\ell\| {\rm tol}$ (which could even be improved by duality arguments, see e.g. \cite{SVHDNP}), where  $P_{X,X_n}$   is the $X$-orthogonal projection onto $X_n$. 

Of course, a key question is how to practically construct spaces $X_n$ warranting \eqref{certify} for possibly small $n$.
A common strategy of essentially all RBMs is the following. Given $X_n$, find a   {\em surrogate} $R(\mu,X_n)$, $\mu\in\mathcal{P}$,
such that
\begin{equation}
\label{surr1}
\|p(\mu) -P_{X,X_n}p(\mu)\|_X \leq C_R R(\mu,X_n)
\end{equation}
holds for some constant $C_R$ independent of $\mu$ and $n$.
Here it is crucial that  the evaluation of $R(\mu,X_n)$   {is} sufficiently efficient so that the maximization of $R(\mu,X_n)$ over 
$\mu\in\mathcal{P}$ is computationally  feasible. Then perform the {\em greedy algorithm} {\bf GA} based on this surrogate, as described in Algorithm \ref{alg:greedy}

\begin{algorithm}[htb]
  \caption{greedy algorithm}
  \label{alg:greedy}
  \begin{algorithmic}[1]
    \Function{{\bf GA}}{}
  \State Set $X_0 := \{0\}$, $n=0$,
  \While{$\argmax_{\mu\in\mathcal{P}} R(\mu,X_n) \ge tol$}
    \State
    \begin{equation}
      \begin{aligned}
        \mu_{n+1} & := \argmax_{\mu\in\mathcal{P}} R(\mu,X_n), \\
        p_{n+1} & := p(\mu_{n+1}), \\
        X_{n+1}& := {\rm span}\,\big\{X_n,\{p(\mu_{n+1})\}\big\} = {\rm span}\,\{p_1, \dots, p_{n+1}\}
      \end{aligned}
      \label{greedy1}
    \end{equation}
  \EndWhile
  \EndFunction
  \end{algorithmic}
\end{algorithm}

We have ignored for the moment   the fact that the snapshots $p(\mu_{n })$
can, of course, not be computed exactly but only approximately within some sufficiently large but finite dimensional ``truth space''.

To see whether such a greedy space search produces good reduced models one can compare them with
the ``best possible'' spaces.  Clearly, the $n$-width $d_n(\mathcal{M})_X$ from \eqref{Kolm}
is a {\em lower bound} for the accuracy attainable by any RBM, i.e.,
\begin{equation}
\label{greedyerror}
d_n(\mathcal{M})_X\leq  \sigma_n(\mathcal{M})_X := \sup_{\mu\in\mathcal{P}}\|p(\mu) -P_{X,X_n}p(\mu)\|_X  .
\end{equation}
 Unfortunately,  in general it seems to be impossible to compute the precise
 {\em optimal} subspaces  for which the $n$-width is attained.
Nevertheless, the closer $\sigma_n(\mathcal{M})_X$ is to $d_n(\mathcal{M})_X$
the better the choice of $X_n$.  

To see what can be achieved in this regard, recall from  \cite{BCDDPW,Buffaetal} that
even when $R^*(\mu,X_n):=\|p(\mu)-P_{X,X_n}p(\mu)\|_X$ is the {\em ideal} surrogate,
in a direct comparison  $\sigma_n(\mathcal{M})_X\leq K_n d_n(\mathcal{M})_X$
the constant $K_n$ can be as large as $2^n$.  Nevertheless, the following more favorable
results in terms of {\em convergence rates} hold for surrogates that are {\em tight}, i.e.,  
if in addition to the upper bound \eqref{surr1} it uniformly sandwiches the exact distance.

\begin{definition}
\label{def:tightsurr}
We call the surrogate $R(\mu,X_n)$,
{\em tight} if
 there exist  positive constants $c_R, C_R$ such that 
\begin{equation}
\label{surr2}
c_R R(\mu,X_n)\leq   \|p(\mu) -P_{X,X_n}p(\mu)\|_X \leq C_R R(\mu,X_n), 
\end{equation}
  uniformly in $\mu\in\mathcal{P}$. Moreover, we call
\begin{equation}
\label{condR}
\kappa(R):= \inf\,\{ C_R/c_R : c_R,\, C_R\,\,\mbox{satisfiy}\,\, \eqref{surr2}\,\, \mbox{for all}\,\,\mu\in\mathcal{P},\,  n\in\N\},
\end{equation}
the {\em condition} of the surrogate $R$.
\end{definition}
\begin{rem}
\label{remweakgreedy}
As   already observed in \cite{BCDDPW}  whenever the surrogate is tight, i.e.  \eqref{surr2})  holds,
then the snapshots $p_n=p(\mu_n)$ from \eqref{greedy1}   satisfy the {\em weak greedy condition}
\begin{equation}
\label{gamma}
\|p_n - P_{X,X_n}p_n\|_X \geq \kappa(R)^{-1} 
\dist\,(\mathcal{M},X_n)_X, 
\quad n\in \N,
\end{equation}
where $\kappa(R)$ is given by \eqref{condR}. 
\end{rem}

The following statements are then readily derived  from the results in \cite{BCDDPW,DPW}.
\begin{theorem}
\label{thrates}
Assume that the spaces $X_n$ are obtained through a greedy algorithm {\bf GA}, \eqref{greedy1} based on
tight surrogates. 
Then,
if $d_n(\mathcal{M})_X=O(n^{-\alpha})$, for some
$\alpha >0$ or if $d_n(\mathcal{M})_X=O(e^{-cn^\alpha})$, for some
$c, \alpha >0$, one has 
\begin{align}
\label{rates}
\dist\,(\mathcal{M},X_n)_X & = O(n^{-\alpha}), & \dist\,(\mathcal{M},X_n)_X & = O(e^{-\t cn^{ {\alpha} }}),& n& \to \infty,
\end{align}
respectively, where the constants depend on $\alpha, c$, and $\kappa(R)$ 
with exact specification given in \cite{BCDDPW,DPW}.
Moreover, these bounds remain valid up to the tolerance ${\rm tol}^*$ when all computations are carried
out within this accuracy.
\end{theorem}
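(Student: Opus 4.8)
The plan is to reduce the statement to the abstract convergence theory for \emph{weak greedy} algorithms. The first step is to note that tightness of the surrogate has already been converted, in Remark \ref{remweakgreedy}, into exactly the weak greedy condition: the snapshots $p_n=p(\mu_n)$ selected by {\bf GA} satisfy
\begin{equation*}
\|p_{n+1}-P_{X,X_n}p_{n+1}\|_X\;\ge\;\gamma\,\dist\,(\mathcal{M},X_n)_X,\qquad n\in\N,\quad \gamma:=\kappa(R)^{-1}\in(0,1].
\end{equation*}
Thus {\bf GA} driven by a tight surrogate \emph{is} a weak greedy algorithm whose parameter $\gamma$ depends only on the condition $\kappa(R)$ of the surrogate, in particular not on $n$ or $\mu$.

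The second step is to invoke the abstract rate-optimality results for weak greedy algorithms, \cite{BCDDPW} for the algebraic regime $d_n(\mathcal{M})_X=O(n^{-\alpha})$ and \cite{DPW} for the sub-exponential regime $d_n(\mathcal{M})_X=O(e^{-cn^\alpha})$. Since $\dist\,(\mathcal{M},X_n)_X=\sigma_n(\mathcal{M})_X$ by \eqref{greedyerror}, those results say precisely that a weak greedy algorithm with parameter $\gamma$ preserves the algebraic rate, $\sigma_n(\mathcal{M})_X=O(n^{-\alpha})$ with a constant depending on $\alpha$ and $\gamma$, and preserves the sub-exponential rate \emph{up to a reduction of the constant in the exponent}, $\sigma_n(\mathcal{M})_X=O(e^{-\tilde c n^\alpha})$ with $\tilde c=\tilde c(c,\alpha)<c$; the explicit dependence of all constants on $\alpha,c,\gamma$ is the one recorded in those references, and substituting $\gamma=\kappa(R)^{-1}$ gives \eqref{rates}. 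For orientation, the mechanism behind the cited results is a linear-algebra argument on the snapshots: writing $a_n:=\|p_n-P_{X,X_{n-1}}p_n\|_X$, the weak greedy inequality together with the monotonicity of $n\mapsto\sigma_n(\mathcal{M})_X$ yields $\gamma\,\sigma_n(\mathcal{M})_X\le a_{n+1}\le\sigma_n(\mathcal{M})_X$ and the quasi-monotonicity $a_{n+1}\le\gamma^{-1}a_n$; expanding the $p_n$ in an orthonormal basis adapted to the nested spaces $X_n$ and comparing the resulting triangular matrix with an arbitrary $K$-dimensional subspace nearly realizing $d_K(\mathcal{M})_X$ bounds products $\prod_{j=N+1}^{N+m}a_j^2$ in terms of $d_K(\mathcal{M})_X$, $\gamma$, and combinatorial factors, and optimizing over $K,m$ turns the decay of $d_K(\mathcal{M})_X$ into the claimed decay of the $a_n$, hence of $\sigma_n(\mathcal{M})_X$.

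The third step handles the qualifier that all computations — the snapshots $p(\mu_n)$ and the evaluation of $R(\mu,X_n)$ — are carried out only within accuracy ${\rm tol}^*$. One replaces $p_n$ by a ``truth'' approximation $\tilde p_n$ with $\|\tilde p_n-p_n\|_X\lesssim{\rm tol}^*$ and $\mu_{n+1}$ by the maximizer of an inexactly evaluated surrogate; a short estimate then shows that the perturbed greedy errors still obey $\|\tilde p_{n+1}-P_{X,X_n}\tilde p_{n+1}\|_X\ge c\,\gamma\,\dist\,(\mathcal{M},X_n)_X-C\,{\rm tol}^*$, i.e.\ a weak greedy condition with a parameter comparable to $\gamma=\kappa(R)^{-1}$ \emph{as long as} $\dist\,(\mathcal{M},X_n)_X$ has not yet descended to the order of ${\rm tol}^*$. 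Since the abstract estimates of the second step are stable under exactly this additive perturbation (a standard modification, cf.\ \cite{BCDDPW}), the bounds \eqref{rates} persist until $\dist\,(\mathcal{M},X_n)_X$ reaches the level ${\rm tol}^*$, as asserted.

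The only delicate point in this program is the bookkeeping in the last step: one must fix what ``within accuracy ${\rm tol}^*$'' means (truth discretization error versus relative/absolute error in evaluating $R(\mu,X_n)$) and track how each contribution enters the effective weak greedy parameter and the additive term. The first two steps are a direct application of Remark \ref{remweakgreedy} together with the quoted theorems of \cite{BCDDPW,DPW}, which is why the statement can reasonably be said to be ``readily derived''.
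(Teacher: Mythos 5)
Your proposal matches the paper's intended route: it relies on Remark \ref{remweakgreedy} to convert tightness into the weak greedy condition with parameter $\gamma=\kappa(R)^{-1}$, then invokes the abstract rate-preservation theorems of \cite{BCDDPW,DPW}, and finally appeals to the robustness of those theorems under additive perturbations of size ${\rm tol}^*$. The paper itself gives no proof beyond this chain of citations (``readily derived from the results in \cite{BCDDPW,DPW}''), so you have simply made explicit what it leaves implicit; the sketch of the internal mechanism (quasi-monotonicity $a_{n+1}\le\gamma^{-1}a_n$ and the flat-greedy/orthonormalization argument) is accurate and consistent with those references.

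One minor remark on bookkeeping: Remark \ref{remweakgreedy} in the paper writes the weak greedy condition as $\|p_n - P_{X,X_n}p_n\|_X \geq \kappa(R)^{-1}\dist(\mathcal{M},X_n)_X$, whereas you write $\|p_{n+1}-P_{X,X_n}p_{n+1}\|_X\ge\gamma\,\dist(\mathcal{M},X_n)_X$; since $p_n\in X_n$, the paper's form is a harmless indexing slip and your version is the one that actually corresponds to Algorithm \ref{alg:greedy} and to the convention in \cite{BCDDPW}. Your caveat in the final step — that ``within accuracy ${\rm tol}^*$'' needs a precise meaning (truth discretization error versus surrogate evaluation error) before the perturbed weak-greedy inequality can be written down cleanly — is also the right thing to flag; the paper does not elaborate on it either and points to the robustness analysis in \cite{BCDDPW}.
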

We call an RBM {\em rate-optimal} if the generated spaces $X_n$ satisfy ``Kolmogorov optimal'' 
 bounds of the type \eqref{rates}.

There are two important points to be drawn from these results that guide the subsequent developments. 
The first one is: although dispensing with   the (infeasible)  ideal surrogate $R^*(\mu,X_n):=\|p(\mu)-P_{X,X_n}p(\mu)\|_X$, a tight surrogate still ensures that the accuracy provided by the reduced bases is in terms of rates
still essentially as good as that of the ``Kolmogorov-best'' subspaces. The second point is {\em quantitative}.
It is absolutely vital to make sure that the condition $\kappa(R)$  stays as small  as possible. In fact, a look at the dependence of the constants in \eqref{rates} on $\kappa(R)$ (see  \cite{BCDDPW,DPW}) reveals that the closer $\kappa(R)$ is kept to one, the better is
the accuracy of the reduced spaces, in comparison with the best spaces,  already for a {\em small reduced dimension},
which is at the heart of model reduction.

Hence, the central objective of this paper is to develop a rigorous conceptual framework to obtain 
practically feasible tight surrogates  whose 
{\em condition} $\kappa(R) \leq C_R/c_R$ is  as close to one as possible, in particular, for problem
classes for which this is currently not known.

\subsection{Objectives and Layout}\label{sect-layout}
To provide an orientation for subsequent developments the corresponding {\em ideal scenario} and
the corresponding basic mechanisms are briefly recalled in Section \ref{sect1.2}.
It is by and large confined to problems that are uniformly elliptic with respect to the parameters.
The perhaps next best understood case is the reduction of a parabolic problem
to a sequence of elliptic problems \cite{Haasdonk,Grepl1,Grepl2}, where however, the lower bound in \eqref{surr2} - and hence tightness - seems to be missing. This has been recently significantly improved in \cite{PU} using a space-time variational formulation.
Moreover,
important progress has   been made in  \cite{GV,GV2,Rozza} developing RBMs for specific saddle
point - hence indefinite - problems such as the Stokes system. In particular, in the present terminology  {stability and, as a consequence, tight surrogates
are obtained by enriching the velocity spaces by {\em supremizers}. More precisely, there are two
approaches. For standard affine parameter dependence of the involved bilinear forms one can 
determine {\em a priori} an enrichment, depending on the number of terms in the bilinear forms, that ensures
that the infinite dimensional inf-sup-constant is preserved, see in \cite{GV,Rozza}.
Since the number of these supremizers is possibly quite large, as an alternative, it is proposed in \cite{GV2}
to {\em adaptively}
add supremizers until a desired inf-sup-stability is reached. It is observed experimentally that in the
tested examples this adaptive enrichment results in an overall much smaller number of stabilizing functions
although the actual guaranteed termination of such a procedure has apparently not been discussed.
Although termination in the context treated in \cite{GV,GV2,Rozza} is apparent, we shall encounter situations where this
is no longer the case. Nevertheless, relating also the stabilizing enrichments to greedy approximations
allows us to treat this case as well, see Section \ref{sec:term}.
 
Although the present paper addresses a rather different problem class the treatment of saddle point problems 
turns out to be an important point of contact. In fact,
the  stabilizing enrichment of the reduced velocity spaces by adding supremizers can be viewed as a special instance
of the interior loop of what we call here {\em double greedy schemes}, presented first at a workshop in Paris, 2011 \cite{paris}. 
The central objective of this paper is in fact  to develop rate-optimal RBMs - viz.   identify well-conditioned 
 tight surrogates -  for 
a much wider scope of problems, in particular, to those that are at present notoriously not covered by current RBM
theory, namely 
{\em transport dominated} problems.  Two model problems are formulated in Sections \ref{ssect:con-diff},
\ref{ssect:transport}, exhibiting increasing levels of obstructions.
We emphasize though that the general methodology presented below is
not restricted to those problems at all. 
 A key role in this context is played by deriving {\em stable variational formulations} for such  problems that are
 necessarily of {\em Petrov-Galerkin} type. The main features of this approach,
 being valid for a wide range of
 problems including indefinite, unsymmetric  {and} singularly perturbed problems, are shown in Section
 \ref{sect:stabvar}. They can  be summarized as    follows:
 \begin{itemize}
 \item[(i)]
 Tight a-posteriori bounds for the truth spaces as well as reduced spaces warrant certification. In particular, truth and reduced spaces can be upgraded without discarding prior computations, see the robustness results in \cite{BCDDPW}.
 \item[(ii)]
 While remaining feasible in the sense of an online/offline decomposition through a built in stabilization loop, the scheme 
 automatically gives rise to
 stability constants that can, in principle,  be made arbitrarily close to one, see also  \eqref{eq:disc-res} and Section \ref{sect:appl}.
 \item[(iii)]
   Viewing time as an additional ``spatial''  variable, the results can be applied to time dependent problems
 through corresponding space-time discretizations, which is one reason to focus on transport problems, see \cite{DHSW}.
 \end{itemize}
In summary,   the particular variational formulations presented in Section \ref{sect:stabvar} combined
 with certain stabilization techniques optimally  inherits the analytic structure of the underlying infinite dimensional
 problem to the reduced model. 
 
 Section \ref{sec:DG} is then devoted to the algorithmic development and analysis of a {\em double greedy} scheme
 giving rise to rate-optimal RBMs.
 
 The theoretical findings are then applied in Section \ref{sect:appl} to the two model problems 
 concerning convection-diffusion and pure transport equations. First numerical experiments quantify the results
 and highlight several particular obstructions.
 
 In Section \ref{sectsaddle} we apply the (slightly modified) scheme to other types of saddle point problems not
 necessarily stemming from the generation of well-conditioned variational formulations. As a simple consequence
 we obtain rate-optimality also for the problems considered in \cite{GV,GV2,Rozza}.

To simplify the exposition we write $a\lsim b\,$ to express that $a$ is bounded by some constant multiple of $b$
independent on any parameters $a,b$ may depend on. Likewise $a\sim b$ means $a\lsim b$ and $b\lsim a$.

\section{Conceptual Preview}\label{sectpreview}
\subsection{Feasibility}\label{sect2.1}

In all subsequent developments  we will be dealing
exclusively with {\em affine} parameter dependence, see e.g. \cite{RHP}. Under this assumption we insist on the
usual division of the computational work into an {\em offline} and {\em online} mode.
Solving a problem in the full space $X$, which is typically computationally very intense, 
 happens only in offline mode where it is understood that
actual computations   take place
in some sufficiently large but finite dimensional subspace $X_\mathcal{N}$ of $X$ which is commonly 
referred to as  the ``truth space''.  Typically $X_\mathcal{N}$ is chosen so as to guarantee 
\begin{equation}
\label{truth}
\sup_{p\in \mathcal{M}}\inf_{v\in X_\mathcal{N}}\|p-v\|_X\leq {\rm tol}^*,
\end{equation}
for some tolerance ${\rm tol}^*$ that   is sufficiently small for the application at hand.
The subscript $\mathcal{N}$ is sometimes supressed when there is no risk of confusion.
The greedy search for the reduced basis functions falls therefore into the offline mode.
This requires evaluating the surrogate for a sufficiently large {\em training set} of parameters which for simplicity
we also denote by $\mathcal{P}$.
In what follows, we call the surrogate {\em feasible} if each evaluation of the surrogate 
requires solving only a problem in the small current reduced space $X_n$.  We sometimes say then that the offline mode
is (computationally offline) feasible. 

Likewise, the {\em online evaluation} is called  {\em feasible} if each reduced basis approximation of some $p(\mu)$ requires solving only
a ``small'' problem of dimension $n$ in the reduced space $X_n$. In this mode solving a ``large'' problem in $X_\mathcal{N}$
is prohibited.

Note that a feasible surrogate is not allowed to   explicitly contain the true solution
$p(\mu)$ (in the truth space). This is why one is essentially forced to resort to {\em residuals} to estimate the true error,
which in turn requires a tight {\em error-residual} relation.

\subsection{The Ideal Setting}
\label{sect1.2}
 The following ``ideal setting'' reveals the basic mechanisms leading to {\em residual based} tight surrogates.

To this end, let $b_\mu(\cdot,\cdot):X\times X\to \R$ be a symmetric uniformly $X$-elliptic bilinear form and $\ell\in X'$,
i.e.
\begin{equation}
\label{1.2.1}
c_a \|q\|_X^2\leq b_\mu(q,q), \quad b_\mu(p,q)\leq C_a\|p\|_X \|q\|_X,\,\,  p,q \in X,\, \mu\in \mathcal{P},
\end{equation}
holds uniformly in $\mu\in \mathcal{P}$. 
For compact $\mathcal{P}$ one obtains a compact
solution set $\mathcal{M}\subset X$ for: given $\ell\in X'$, find $p(\mu)\in X$ such that
\begin{equation*}
b_\mu(p(\mu),q)=\langle \ell,q\rangle, \quad q\in X.
\end{equation*}
There are two key properties that ensure rate-optimality in this setting:\\

 \noindent
{\bf (MP)  Mapping property:} 
the operator
 $B_\mu$, defined by $\langle B_\mu p,q\rangle = b_\mu(p,q)$, $p,q \in X$,
is   for each $\mu\in\mathcal{P}$ an isomorphism from $X$ onto $X'$, i.e.
\begin{equation}
\label{iso}
\|p\|_X \sim \|B_\mu p\|_{X'},\quad \mbox{uniformly in}\,\, \mu\in \mathcal{P}.
\end{equation}
In other words, errors measured in the ``energy norm'' $\|\cdot\|_X$ are
equivalent to residuals in the dual norm $\|\cdot\|_{X'}$.\\
{\bf (BAP) Best Approximation Property:} The Galerkin projection to the current reduced space, which can be done in online mode,  produces, up to constants, a best approximation with respect to the $X$-norm.

In fact, denoting by  $\Pi_{\mu,X_n}$  the Galerkin-projector onto $X_n$ defined by
\begin{equation*}
b_\mu(p(\mu),q)=b_\mu(\Pi_{\mu,X_n}p(\mu),q),\quad q\in X_n,
\end{equation*}
combining  Cea's Lemma with {\bf MP} provides 
 for $p_n(\mu):= \Pi_{\mu,X_n}p(\mu)$
\begin{eqnarray}
\label{res1}
\|p(\mu)- P_{X,X_n}p(\mu)\|_X &\sim & 
\sup_{q\in X}\frac{\langle \ell, q\rangle - 
b_\mu(p_n(\mu),q)}{\|q\|_X} 
:= R (\mu, X_n).
\end{eqnarray}
Thus, {\bf MP} and {\bf BAP} imply that the residual based surrogate, defined by \eqref{res1}, is tight, while the computation
of $p(\mu)$ is completely avoided but traded against the cheap computation of the Galerkin projection in $X_n$.
However, the condition $\kappa(R) $ of the surrogate (see \eqref{condR}) depends on the 
{\em condition number} $\kappa_{X,X'}(B_\mu)\leq C_a/c_a$ (see \eqref{1.2.1}) of the operator $B_\mu$, which should 
therefore be of moderate size.

Finally, feasibility of the surrogate in \eqref{res1} is well known (see e.g. \cite{RHP}) to be ensured when the parameter dependence
of $b_\mu(\cdot,\cdot)$ is {\em affine}, see \eqref{affine} below.

\subsection{Two Model Problems}\label{sect:modelpr}

As soon as one leaves the elliptic setting {\bf MP}, {\bf BAP},   tightness  of residual based
surrogates, are no longer for free.
In particular, so far {\em well-conditioned}  tight surrogates do {\em not} seem to be available yet for  {many unsymmetric PDEs like convection dominated or pure transport problems.} We shall discuss two model problems that bring out several principal obstructions.
 The first example concerns convection-diffusion equations for which, in principle,
classical variational formulations are available. The second example concerns pure transport for which
a ``natural''  variational formulation is less obvious and for which the parameter dependence of the solutions turns out to be
 less regular. Perhaps more importantly, the two examples represent two different scenarios regarding the {\em spaces} associated
 with the bilinear form $b_\mu(\cdot,\cdot)$, an issue that has apparently not been addressed in the RBM context.

\subsubsection{Convection-Diffusion Equations}\label{ssect:con-diff}
As a first example we consider  the linear {\em convection-diffusion} equation
\begin{equation}
\label{eq:cd}
-{\rm div}(\e  \nabla p(x)) + b(\mu)\cdot \nabla p(x) +c p(x)= f(x),\quad \mbox{in}\,\, \Omega, \quad p=0\,\, \mbox{on}\,\, \partial\Omega,
\end{equation}
where for simplicity   we assume for now that only the convection $b(\mu)$ depends on a parameter $\mu$ while $\e$ could be {\em arbitrarily small}.  We could as well include the viscosity and the reaction term varying in suitable regimes.
Its classical weak formulation is
\begin{equation}
  \begin{aligned}
    b_\mu(p,q):=\epsilon (\nabla p, \nabla q) + (b(\mu) \cdot \nabla p, q) + (c p,q) & = \dualp{f,q}, & 
    q \in \MM = H^1_0(\Omega).
  \end{aligned}
  \label{eq:condiff}
\end{equation}
It is well known that
$  b(\mu)  \in W^{1,\infty}(\Omega)^d$,  $c  \in L_\infty(\Omega),\, \mu\in \mathcal{P}$,
such that 
\begin{equation}
  \label{eq:assumption-2}
  -\frac{1}{2} \vdiv b(\mu) + c \ge 0,
\end{equation}
implies {\em well-posedness} of \eqref{eq:condiff} in the sense that the induced operator $B_\mu: H^1_0(\Omega)\to (H^1_0(\Omega))' $  is an isomorphism, i.e., there exists for each $\mu\in \mathcal{P}$
 a unique solution $p(\mu)$ 
to \eqref{eq:condiff} in $H^1_0(\Omega)$.
However, although \eqref{1.2.1} is still valid, the condition number $\kappa_{H^1_0(\Omega),H^1_0(\Omega)}(B_\mu)$
behaves like the P\'{e}clet number $|b(\mu)|_\infty/\e$ and hence is inacceptably large for strongly
dominating convection. As a consequence, in this case the condition   $\kappa(R)$ of the corresponding surrogate  \eqref{res1}
based on the  $H^{-1}(\Omega)$-residual grows with the P\'{e}clet number. Hence, although such a surrogate is theoretically
tight, as long as $\epsilon\geq \epsilon_0$ where $\epsilon_0 >0$ is fixed, the condition $\kappa(R)$ (see \eqref{condR}) is so large,
that, due to the constants in \eqref{rates}, one can  expect essentially no control of the quality of the reduced spaces  for very small $\epsilon_0$ and moderate $n$.
   
Therefore, we are mainly interested here in a robust treatment of {\em arbitrarily} large P\'{e}clet numbers $|b(\mu)|_\infty/\e$
which to our knowledge is currently not well covered by RBM methodology. 

Unfortunately, an easy cure based on  the standard (mesh-dependent) stabilization methods  such as SUPG (see e.g.  \cite{RST} for a survey) does not give rise to an error-residual relation that stays independent of the P\'{e}clet number  
$|b|_\infty/\epsilon$ either.

Instead we pursue here a different line based on stabilizing the problem on the {\em infinite dimensional} level which,
in particular, involves {\em unsymmetric} variational formulations, i.e., $b_\mu(\cdot,\cdot)$ is viewed
as a bilinear form on a pair of (possibly) {\em different} and {\em parameter dependent} Hilbert spaces
 $\MM_\mu,  \HH_\mu$,   $\mu\in \mathcal{P}$.
 
\subsubsection{Linear Transport Equations}\label{ssect:transport}
 
In some sense the situation is even aggrivated when the diffusion vanishes completely as 
in pure {\em parametric  transport equations} forming the core ingredient of Boltzmann equations and related kinetic models 
as well as kinetic formulations of conservation laws.
Already the simplest version  {of a (stationary)  linear transport equation
\begin{equation}
    \mu \cdot \nabla p + cp = f, \quad \text{in } \Omega = [0,1]^d,\quad
    p  = p_b, \quad \text{on } \Gamma_-(\mu),
  \label{eq:transport}
\end{equation}
will be seen to represent the
 ``worst scenario'' from the RBM perspective,
where, denoting by $n(x)$ the outward normal at the point $x$,
\begin{equation*}
  \Gamma_-(\mu) := \{x \in \partial \Omega: \, n(x) \cdot \mu < 0 \},
\end{equation*}
is the inflow boundary for the given convection vector $\mu$.} An example of a
parameter domain would be the sphere $S^{d-1}$ appearing in radiative transfer models, see \cite{KMRW, Sch}.  
It will be seen that the two examples differ in a subtle but essential way, in particular,
regarding smoothness of  the dependence of the solutions
on the parameter. 

 {A possible variational formulation of \eqref{eq:transport} can be found in \cite{EG04}. In order to eventually apply the $n$-width benchmark, it is preferable to measure all parameter dependent solutions in a single reference norm. Therefore we employ here a slightly different variational formulation from \cite{DHSW}:}
 multiplying \eqref{eq:transport} by a test function and integrating by parts, yields 
\begin{equation*}
  ( p, -\mu \cdot \nabla q + c q ) + \int_{\partial \Omega \setminus \Gamma_-} n \cdot \mu p q = \langle f, q \rangle - \int_{\Gamma_-} n \cdot \mu p q .
\end{equation*}
If we now take test functions $q$ that vanish on $\partial \Omega \setminus \Gamma_-$ the boundary integral on the left hand side is zero. Furthermore, we may replace the function $p$ in the boundary integral on the right hand side by the boundary condition $p_b$ so that we obtain
\begin{equation}
\label{btransport}
b_\mu(p,q):= 
\langle p, -\mu \cdot \nabla q + c q \rangle = \langle f, q \rangle - \int_{\Gamma_-} n \cdot \mu p_b q.
\end{equation}
For this variational formulation it is natural to define the function spaces
\begin{equation} 
\label{transspaces}
  \HH_\mu := \operatorname{clos}_{\|\cdot\|_{\HH_\mu}} \left\{ q \in C^\infty(\Omega) : \, q|_{\partial \Omega \setminus \Gamma_-} = 0 \right\}, \quad
  \MM_\mu  := L_2(\Omega)
\end{equation} 
endowed with the norms
\begin{equation} 
\label{transnorms}
  \|q\|_{\HH_\mu}  := \|\bo_\mu^* q \|_{L_2}, \quad
  \|p\|_{\MM_\mu}  := 
   \|p\|_{L_2}.
\end{equation} 
It is shown in \cite{DHSW} that the operator $B_\mu$ induced by $b_\mu(\cdot,\cdot)$, is an isomorphism
 $B_\mu: X_\mu \to Y_\mu'$ so that   \eqref{transnorms}  indeed defines a norm.
 \begin{rem}
 \label{remYdiffer}
 Notice that the spaces $\HH_\mu$ differ even as sets for different $\mu$. Moreover, in contrast to the previous example we
must have $\MM_\mu\neq \HH_\mu$ here.
 \end{rem}

\section{Robust Error-Residual Mappings for Unsymmetric Problems}\label{sect:stabvar}

\subsection{The Basic Principle for {\bf MP}}\label{sec:recipe}
 {In the following, we consider general bilinear forms $b_\mu(\cdot,\cdot) : \MM_\mu \times \HH_\mu\to \R$ for possibly parameter dependent Hilbert spaces $\MM_\mu$ and $\HH_\mu$ giving 
rise to} what one may call an
infinite dimensional  {\em Petrov-Galerkin} formulation where the  {\emph{trial space}} $\MM_\mu$ generally differs from the  {\emph{test space}}
$\HH_\mu$. Thus, the operator 
$B_\mu$ given by
$
\langle \bo_\mu q,v\rangle = b_\mu(q,v),\quad q\in \MM_\mu,\,\ v\in \HH_\mu,
$
is now viewed as a mapping from $\MM_\mu$ to $\HH_\mu'$. In accordance with the preceding examples,
we shall assume that this operator is
actually an {\em  isomorphism},  i.e.,  the operator equation
\begin{equation}
  \bo_\mu  p(\mu) = f,
  \label{eq:op-eq}
\end{equation}
has for any $f\in \HH_\mu'$ a unique solution in $\MM_\mu$.  However, $B_\mu$ may possibly have an unacceptably  large $\mu$-dependent condition which can be quantified with the aid of 
 Babuska's Theorem: 
 if there exist constants $0 < \beta(\mu), C_b(\mu) < \infty$ such that
\begin{equation}
\label{babuska}
\inf_{q\in \MM_\mu}\sup_{v\in \HH_\mu}\frac{b_\mu(q,v)}{\|q\|_{\MM_\mu}\|v\|_{\HH_\mu}}\geq \beta(\mu), \quad
\sup_{q\in \MM_\mu} \sup_{v\in \HH_\mu}\frac{|b_\mu(q,v)|}{\|q\|_{\MM_\mu}\|v\|_{\HH_\mu}} \leq C_b(\mu)
\end{equation}
and for every $v\in \HH_\mu$ there exists a $q\in \MM_\mu$ such that $b_\mu(q,v)\neq 0$, 
then one has   $\kappa_{\MM_\mu,\HH_\mu}(B_\mu)\leq C_b(\mu)/\beta(\mu)$.

As in the case of convection dominated convection-diffusion equations $\kappa_{\MM_\mu,\HH_\mu}(B_\mu)$
could be very large, severely degrading a greedy selection of snapshots in a RBM. The goal of this subsection is
to describe how to ``stabilize'' the problem on the {\em infinite dimensional} level which could be viewed as
preconditioning \eqref{eq:op-eq}. 
The underlying basic principle  has been used before in several works for different purposes, see e.g. \cite{DJ1,DJ2,MMRS,W}. Here our main orientation is taken from 
\cite{cdw,DHSW,W}. We briefly rehash the essential facts in order to bring in an additional {\em new} element, namely the interrelation 
of  {\em  Petrov-Galerin} schemes and {\em associated saddle point problems}, which plays an essential role for eventually constructing 
well-conditioned tight surrogates.

We begin with collecting a few useful preliminaries.
It will be usefull to identify for a given $q\in \MM_\mu$ the {\em supremizer} $v_q$ for which $\sup_{v\in \HH_\mu}b_\mu(q,v)/\|v\|_{\HH_\mu}$ is attained, see e.g. \cite{PR, GV}.

\begin{rem}
\label{remcond2}
For every $q\in \MM_\mu$ the optimal test function is given by
\begin{equation}
\label{supremizer}
v_q:=\argmax_{v\in \HH_\mu}\frac{b_\mu(q, v)}{\|v\|_{\oldHs} }= R_{\oldHs}^{-1}\bo_\mu  q,
\end{equation}
where $R_{\HH_\mu}: \HH_\mu\to \HH_\mu'$ is the Riesz-map defined by
\begin{equation}
  \langle R_{\HH_\mu} v,w\rangle  = (v,w)_{\HH_\mu}, \quad v,w  \in \HH_\mu, \quad\quad\quad \|\cdot\|_{\HH_\mu}^2  = (\cdot,\cdot)_{\HH_\mu},
  \label{eq:riesz-Hs}
\end{equation}
Hence, in particular, one has
\[
\inf_{q\in \MM_\mu}\frac{\|R_{\HH_\mu}^{-1}B_\mu q\|_{\HH_\mu}}{\|q\|_{\MM_\mu}} = 
\inf_{q\in \MM_\mu}\sup_{v\in \HH_\mu}\frac{b_\mu(q,v)}{\|q\|_{\MM_\mu}\|v\|_{\HH_\mu}}.
\]
\end{rem}

For convenience we recall the simple argument. 
Written in variational form, the supremizer is defined by $(v_q, w)_{\HH_\mu} = b_\mu(q, w)$ for all $w \in \HH_\mu$, which yields
\begin{equation*}
  \sup_{v \in \HH_\mu} \frac{b_\mu(q, v)}{\|v\|_{\HH_\mu}} = 
  \sup_{v \in \HH_\mu} \frac{(v_q, v)_{\HH_\mu}}{\|v\|_{\HH_\mu}} = \|v_q\|_{\HH_\mu},
\end{equation*}
which readily confirms the claim.\\

Although for most of the following considerations  the   dependence of the involved bilinear forms on the parameter $\mu\in\mathcal{P}$
is irrelevant it will be convenient for later purposes to retain the parameter dependence in the notation.\\

\paragraph*{\bf Renormation:}
The possible ill-conditioning reflected by a very  large $\kappa_{\MM,\HH}(B)\leq C_b/\beta$ in \eqref{babuska} can be remedied by properly modifying 
one of the two norms $\|\cdot\|_{\HH_\mu}$ or $\| \cdot\|_{\MM_\mu}$ while keeping the other one fixed. 
Specifically, we wish to choose an equivalent but possibly different norm $\|\cdot\|_{\hat\MM_\mu}$ for $\MM$ so that   ideally   $C_b(\mu)=\beta(\mu)=1$,
which then means that
\begin{equation}
\label{err-res}
\| p(\mu) - q\|_{\hat\MM_\mu} = \| f -  B_\mu q\|_{\HH_\mu'},\quad q \in \MM_\mu,\,\, \mu\in \mathcal{P}.
\end{equation}
In this event the {\em residual} of a best approximation would be an {\em ideal}  surrogate even sparing one the computation of stability constants for
the error certification.

Our starting point is exactly this latter {\em ideal error-residual relation}. Specifically,
given $\|\cdot\|_{\HH_\mu}$, we endow now $\MM_\mu$ with a new norm $\|\cdot\|_{\hat\MM_\mu}$, defined by
\begin{equation}
\label{Mu2}
  \|p\|_{\hat\MM_\mu}:= \sup_{v\in \HH_\mu}\frac{b_\mu(p,v)}{\|v\|_{\HH_\mu}}=  \|\bo_\mu  p\|_{\oldHs'}= \|R_{\oldHs}^{-1}\bo_\mu  p\|_{\oldHs}   ,\quad p\in \MM_\mu,\, \mu\in \mathcal{P}.
\end{equation}
 {
Note that this is indeed a well-defined norm because $R_{\oldHs}^{-1} \bo_\mu: \MM_\mu \to \HH_\mu$ is an isomorphism, hence injective, and that because of 
$
  \|R_{\oldHs}^{-1}\bo_\mu q\|_{\HH_\mu}^2 = \langle \bo_\mu^* R_{\oldHs}^{-1}\bo_\mu q, q \rangle,
$  
the corresponding Riesz map $R_{\hat\MM_\mu }: \MM_\mu \to \MM_\mu'$ is given by $R_{\hat\MM_\mu } := \bo_\mu^* R_{\oldHs}^{-1}\bo_\mu$.
In addition, this shows that the $\hat\MM_\mu$-norm is equivalent to the original norm, i.e., there are $c_M, C_M >0$ such that
\begin{equation}
  c_M \|q\|_{\MM_\mu} \le \|q\|_{\mnew} \le C_M \|q\|_{\MM_\mu}, \quad q\in \MM_\mu,\,\, \mu\in \mathcal{P}.
  \label{eq:M-equiv}
\end{equation}
}
Note also that \eqref{transnorms} is a special case of \eqref{Mu2}, where
\begin{equation}
\label{specialRiesz}
R_{\HH_\mu} = B_\mu B_\mu^*.
\end{equation}
and thus $\|\cdot\|_{\hat\MM_\mu} = \|(B_\mu B_\mu^*)^{-1} B_\mu \cdot\|_{\HH_\mu} = \|\cdot\|_{L_2}$.

\begin{rem}
\label{remcond3}
For the $\|\cdot\|_{\hat\MM_\mu}$ norm one has optimal continuity and stability constants
$C_b(\mu)= \beta(\mu)=1$,  $\mu\in \mathcal{P}$, i.e.
\begin{equation}
\label{infsup2}
 \sup_{q\in \mnew}\sup_{v\in \HH_\mu}\frac{b_\mu(q, v)}{\|v\|_{\oldHs}\|q\|_{\mnew}}=
 \inf_{q\in \mnew}\sup_{v\in H_\mu}\frac{b_\mu(q, v)}{\|v\|_{\oldHs}\|q\|_{\mnew}} = 1.
\end{equation}
Hence, $\kappa_{\hat\MM_\mu,\HH_\mu'}(\bo_\mu)=1$, i.e., $B_\mu$ is an isometry for these norms,  which is  the desired robust - in fact optimal -  error-residual relation
\eqref{err-res}
 {\bf MP}.  
\end{rem}
\begin{proof}  The first relation follows from
\begin{eqnarray*}
|b_\mu(q, v)|& = & |\langle R_{\oldHs}^{-1}\bo_\mu q, R_{\oldHs}v\rangle|\leq  
\|R_{\oldHs}^{-1}\bo_\mu q\|_{\oldHs} \|R_{\oldHs}v\|_{\oldHs'}
= \|q\|_{\mnew} \|v\|_{\oldHs}.
\end{eqnarray*}
On the other hand,
note that
for any $q\in \MM_\mu$ its supremizer $v_q:= R_{\HH_\mu}^{-1}B_\mu q\in \HH_\mu$ gives
 by \eqref{Mu2}, \eqref{eq:riesz-Hs}, $b_\mu(q,v_q)= \langle \bo_\mu q,R_{\HH_\mu}^{-1}\bo_\mu q\rangle =
\|q\|_{\hat\MM_\mu}^2$ and $\|v_q\|_{\HH_\mu}= \|q\|_{\hat\MM_\mu}$ so that 
\begin{equation}
\label{infsup1}
\inf_{q\in \MM_\mu}\sup_{v\in \HH_\mu}\frac{b_\mu(q,v)}{\|q\|_{\hat\MM_\mu}\|v\|_{\HH_\mu}}\geq 
\inf_{q\in \MM_\mu}\frac{\langle \bo_\mu q,R_{\HH_\mu}^{-1}\bo_\mu q\rangle}{\|q\|_{\hat\MM_\mu}^2}=1,
\end{equation}
which completes the proof.
 \end{proof}

\subsection{Petrov-Galerkin and Saddle Point Problems}\label{sect:petgal-saddle}
 The validity of {\bf BAP} is no longer automatic for unsymmetric formulations.
In principle, it can be approached through contriving suitable  {\em Petrov Galerkin} discretizations.
A central issue in this section is to relate such Petrov-Galerkin schemes to {\em equivalent  saddle-point problems}. In particular, this avoids the explicit computation of the respective test spaces which could be parameter dependent.
 
To this end let $W\subset \MM_\mu$ be a ``generic'' trial space which will play several different roles.
It may stand for the full infinite dimensional space,  or for the truth space, or eventually for the reduced space. 
Notice first that the {\em best approximation} $p_W(\mu) \in W$ for $p(\mu)=\bo_{\mu}^{-1}f$ is 
\begin{equation}
\label{err-res2}
p_W(\mu) :=\argmin_{q\in W}\|p(\mu)-q\|_{\hat\MM_\mu} = \argmin_{q\in W}\|f- \bo_\mu q\|_{\HH_\mu'} ,
\end{equation}
which is therefore given by the {\em normal equation}: find $p_W(\mu) \in \mf$ such that
\begin{equation}
  \begin{aligned}
    (f - \bo_\mu  p_W(\mu), \bo_\mu  q)_{\oldHs'} & = 0, &  q& \in \mf.
  \end{aligned}
  \label{eq:lsq}
\end{equation}
What keeps us from using this as the basis for a variational discretization,
 is the fact that the $\oldHs'$-scalar product is usually hard to evaluate numerically. Noting that  $R_{\oldHs'} = R_{\oldHs}^{-1}$   the last equation is equivalent to
\begin{equation}
  \begin{aligned}
    \dualp{R_{\oldHs}^{-1} \left( f - \bo_\mu  p_W(\mu) \right), \bo_\mu  q} & = 0, &  q & \in \mf.
  \end{aligned}
  \label{res4-n}
\end{equation}
Introducing the auxiliary variable $u(\mu) := R_{\oldHs}^{-1} \left( f-\bo_\mu  p_W(\mu) \right)$, or rather
\begin{equation}
  \begin{aligned}
    \dualp{R_{\oldHs} u(\mu), v} & = \dualp{f - \bo_\mu  p_W(\mu), v}, & v & \in \oldHs,
  \end{aligned}
\label{auxr-n}
\end{equation}
in weak form, the relation \eqref{res4-n} and hence \eqref{err-res2} can be equivalently written as
\begin{equation}
\label{system-n}
\begin{array}{lccl}
  \langle R_{\oldHs} u(\mu),v\rangle  + b_\mu(p_W(\mu),v) 
   & = & \langle f,v\rangle, & v\in \oldHs,\\
b_\mu(q, u(\mu)) 
&=& 0,& q \in \mf,
\end{array}
\end{equation}
which now just involves standard $L_2$-inner products.
Of course, in particular for $W=\MM_\mu$
\begin{equation}
\label{system-full}
\begin{array}{lccl}
  \langle R_{\oldHs} u(\mu),v\rangle  + b_\mu(p(\mu),v) 
   & = & \langle f,v\rangle, & v\in \oldHs,\\
b_\mu(q, u(\mu)) 
&=& 0,& q \in \MM_\mu,
\end{array}
\end{equation}
is equivalent to the original problem \eqref{eq:op-eq}, which now takes the form of
a {\em saddle point problem}. Bijectivity of $B_\mu$ readily shows that
\begin{equation}
\label{umu}
u(\mu)=0,\quad \mu\in\mathcal{P}.
\end{equation}
Hence, the solution manifold  of the saddle point
problem \eqref{system-full} in   $\bigcup_{\mu\in\mathcal{P}}\MM_\mu\times \HH_\mu$ can be identitied according to
\begin{equation}
\label{sol-M}
\mathcal{M} = \mathcal{M}_{\MM}\times \{0\},\quad \mathcal{M}_{\MM}:= \{p(\mu): p(\mu) \, \mbox{solves \eqref{eq:op-eq}}\},
\end{equation}
as it should, with   the one  for the original problem \eqref{eq:solution-set}.

\begin{rem}
\label{rem-sol-M}
Even when the spaces $\HH_\mu$ differ as sets when $\mu$ varies, as e.g. in \eqref{transnorms} for
the transport equation, the solution manifold is
still compact as long as the norms $\|\cdot\|_{\MM_\mu}$ are all equivalent to a reference norm. Hence,
the greedy errors are guaranteed to tend to zero and the $n$-widths benchmark is applicable. The issue of parameter dependence of the involved spaces will be taken up in Section \ref{sect:pardep} again.
\end{rem}

Now given a finite dimensional subspace $W$, we cannot treat \eqref{system-n} yet, since we cannot
test by all $v\in \HH_\mu$. The following interpretation of this idealized situation is  {immediate from the normal equation \eqref{res4-n}.}

\begin{rem}
\label{remoptpg}
The problem  \eqref{system-n} is equivalent to the Petrov-Galerkin scheme: find $p_W(\mu)$ such that
\begin{equation}
\label{petgalopt}
b_\mu(p_W(\mu),v)=\langle f,v\rangle,\quad v\in Y_W ,
\end{equation}
where
\begin{equation}
\label{opttestspace}
Y_W:= R_{\HH_\mu}^{-1}\bo_\mu W, 
\end{equation} 
is the {\em optimal test space} associated with $W$ and $p_W(\mu)$ is the best $\MM_\mu$-approximation to 
$p(\mu)$ in $\MM_\mu$.
\end{rem}

Since \eqref{petgalopt} is practically infeasible  a natural strategy is to replace $\HH_\mu$ by a sufficiently large 
finite dimensional subspace
$V\subset \HH_\mu$ that inherits ``sufficient'' stability. 
The following observation, which plays a crucial role in what follows, explains the interrelation between a practically
feasible version of \eqref{petgalopt} and a fully finite dimensional version of \eqref{system-n}.   
\begin{prop}
\label{propPG}
The solution component $p_{W,V}(\mu)$ of the saddle point problem 
\begin{equation}
\label{system-disc}
\begin{array}{lccl}
  \langle R_{\oldHs} u_{V,W}(\mu),v\rangle  + b_\mu( p_{W,V}(\mu),v) 
  & = & \langle f,v\rangle, & v \in \hf,\\
b_\mu(q,u_{V,W}(\mu)) 
&=& 0,& q\in \mf.
\end{array}
\end{equation}
solves the Petrov-Galerkin problem \eqref{petgalopt} with
the optimal test space $\HH_W$ replaced by
$\tilde Y_W= P_{\HH_\mu,V}(R_{\HH_\mu}^{-1}B_\mu(W))$ where $P_{\HH_\mu,V}$ denotes the $\HH_\mu$-orthogonal projection.
\end{prop}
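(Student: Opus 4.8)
The plan is to show that the two equations in the saddle point problem \eqref{system-disc} are, together, equivalent to a Petrov--Galerkin statement of the form \eqref{petgalopt} but with the test space $\tilde Y_W := P_{\HH_\mu,V}(R_{\HH_\mu}^{-1}B_\mu W)$. First I would observe that the second equation of \eqref{system-disc}, namely $b_\mu(q,u_{V,W}(\mu))=0$ for all $q\in W$, says precisely that $u_{V,W}(\mu)$ is $\HH_\mu$-orthogonal to the optimal test space $Y_W = R_{\HH_\mu}^{-1}B_\mu W$; indeed $b_\mu(q,u) = \langle B_\mu q, u\rangle = (R_{\HH_\mu}^{-1}B_\mu q, u)_{\HH_\mu}$, so the vanishing of $b_\mu(q,u_{V,W})$ over $q\in W$ is exactly $u_{V,W}\perp_{\HH_\mu} Y_W$. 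Combined with $u_{V,W}\in V$, this means $u_{V,W}\perp_{\HH_\mu} P_{\HH_\mu,V}Y_W = \tilde Y_W$ (orthogonality to $Y_W$ within $V$ is the same as orthogonality to its $\HH_\mu$-projection onto $V$, since $u_{V,W}\in V$).

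Next I would take an arbitrary $\tilde v\in\tilde Y_W$, write $\tilde v = P_{\HH_\mu,V}v_0$ with $v_0 = R_{\HH_\mu}^{-1}B_\mu q$ for some $q\in W$, and test the first equation of \eqref{system-disc} with the element $\tilde v \in V$. This gives $\langle R_{\HH_\mu}u_{V,W},\tilde v\rangle + b_\mu(p_{W,V},\tilde v) = \langle f,\tilde v\rangle$. The key point is that $\langle R_{\HH_\mu}u_{V,W},\tilde v\rangle = (u_{V,W},\tilde v)_{\HH_\mu} = 0$ by the orthogonality just established. Hence $b_\mu(p_{W,V},\tilde v)=\langle f,\tilde v\rangle$ for all $\tilde v\in\tilde Y_W$, which is exactly \eqref{petgalopt} with $Y_W$ replaced by $\tilde Y_W$. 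Conversely, given a solution $p_{W,V}$ of this Petrov--Galerkin problem one recovers $u_{V,W}$ as the Riesz representation in $V$ of the residual $f-B_\mu p_{W,V}$ restricted to $V$, i.e. $\langle R_{\HH_\mu}u_{V,W},v\rangle = \langle f - B_\mu p_{W,V},v\rangle$ for $v\in V$ (this is the first equation), and one checks the second equation holds because for $q\in W$ one has $b_\mu(q,u_{V,W}) = (R_{\HH_\mu}^{-1}B_\mu q,u_{V,W})_{\HH_\mu}$, and since $u_{V,W}\in V$ this equals $(P_{\HH_\mu,V}R_{\HH_\mu}^{-1}B_\mu q, u_{V,W})_{\HH_\mu}$, a pairing of $u_{V,W}$ against an element of $\tilde Y_W$; but $u_{V,W}$, being the Galerkin-type residual representative, is orthogonal to $\tilde Y_W$ precisely when $p_{W,V}$ solves the Petrov--Galerkin equation — so the two formulations are genuinely equivalent.

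The one point that needs a little care — and which I expect to be the main (though modest) obstacle — is the bookkeeping around the two projections: one must be careful that ``$u\in V$ and $u\perp_{\HH_\mu}Y_W$'' is genuinely the same condition as ``$u\in V$ and $u\perp_{\HH_\mu}\tilde Y_W$,'' and that testing the first equation only against $\tilde Y_W\subset V$ (rather than all of $V$) still suffices to characterize $p_{W,V}$ together with the second equation. This is a standard fact about saddle point systems — the first block equation tested on $V$ splits, via the $\HH_\mu$-orthogonal decomposition $V = \tilde Y_W \oplus (\tilde Y_W)^{\perp_V}$, into the Petrov--Galerkin equation on the $\tilde Y_W$-component and the defining relation for $u_{V,W}$ on the complement — but it should be spelled out so that the stated equivalence (``solves \ldots with $\HH_W$ replaced by $\tilde Y_W$'') is unambiguous. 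No deep estimate is required; the argument is purely algebraic, relying only on the definition \eqref{eq:riesz-Hs} of $R_{\HH_\mu}$, the identity $b_\mu(q,v) = (R_{\HH_\mu}^{-1}B_\mu q, v)_{\HH_\mu}$, and the self-adjointness of the $\HH_\mu$-orthogonal projector $P_{\HH_\mu,V}$.
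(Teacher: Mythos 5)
Your proof is correct and uses essentially the same computation as the paper: test the first equation with $v_q = P_{\HH_\mu,V}R_{\HH_\mu}^{-1}B_\mu q$, then use the self-adjointness of $P_{\HH_\mu,V}$ together with $u_{V,W}\in V$ and the second equation to kill the $\langle R_{\HH_\mu}u_{V,W},v_q\rangle$ term. The converse direction you sketch is not needed (the proposition claims only that $p_{W,V}$ solves the Petrov--Galerkin problem, not that the Petrov--Galerkin problem uniquely characterizes it), so the worries you raise at the end are beside the point for this statement.
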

\begin{proof}
For any $q\in W$, consider $v_q:= P_{\HH_\mu,V}(R_{\HH_\mu}^{-1}B_\mu q)\in V$ and note that, by
the first equation \eqref{system-disc},
\begin{eqnarray*}
b_\mu(p_{W,V}(\mu),v_q)&=& \langle B_\mu p_{W,V}(\mu), v_q\rangle = \langle f, v_q\rangle - \langle R_{\HH_\mu}u_{V,W},v_q\rangle.
\end{eqnarray*}
Since
\begin{eqnarray*}
\langle R_{\HH_\mu}u_{V,W},v_q\rangle &= &(u_{V,W}(\mu),v_q)_{\HH_\mu}=(u_{V,W}(\mu),R_{\HH_\mu}^{-1}B_\mu q)_{\HH_\mu}
=  b_\mu(q,u_{V,W}(\mu))=0,
\end{eqnarray*}
where we have used the second equation in \eqref{system-disc}. \end{proof}

Clearly, the larger $V$ the closer $\tilde Y_W$ is to $\HH_W$ so that the choice of $V$ can be viewed as
a {\em stabilization}. 
 To quantify this observation, we call \emph{$V$ is $\delta$-proximal for $W$} if 
\begin{equation}
\label{delta-prox}
\|(I- P_{\HH_\mu,V})R_{\HH_\mu}^{-1}B_\mu q\|_{\HH_\mu} \leq \delta \|R_{\HH_\mu}^{-1}B_\mu q\|_{\HH_\mu},
\quad q\in W, 
\end{equation}
holds for some fixed $0\leq \delta <1$, see \cite{DHSW,W}.
\begin{prop}
\label{prop:stab2}
 Assume that for given $W\times V \subset \MM_\mu\times \HH_\mu$ the test space $V$ is $\delta$-proximal for $W$, i.e. \eqref{delta-prox} is satisfied. Then, one has
\begin{equation}
\label{BAP1}
\|p(\mu)- p_{W,V}(\mu)\|_{\hat\MM_\mu}\leq \frac{1}{1-\delta}\inf_{q\in W}\|p(\mu)-q\|_{\hat\MM_\mu}.
\end{equation}
and 
\begin{equation}
\label{BAP2}
\|p(\mu)- p_{W,V}(\mu)\|_{\hat\MM_\mu}+ \|u(\mu)- u_V(\mu)\|_{\HH_\mu}\leq \frac{2 }{1-\delta} 
\inf_{q\in W}\|p(\mu)-q\|_{\hat\MM_\mu}.
\end{equation}
Moreover, one has
\begin{equation}
\label{inf-sup-0}
\inf_{q\in \mf} \sup_{v\in \hf}\frac{b_\mu(q, v)}{\|v\|_{\HH_\mu}\|q\|_{\hat\MM_\mu}} \geq  {\sqrt{1-\delta^2}} .
\end{equation}
\end{prop}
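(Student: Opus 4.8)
The plan is to prove the three estimates in Proposition~\ref{prop:stab2} in order, using the characterization of $p_{W,V}(\mu)$ as the solution of the Petrov--Galerkin scheme \eqref{petgalopt} with perturbed test space $\tilde Y_W = P_{\HH_\mu,V}(R_{\HH_\mu}^{-1}B_\mu W)$ from Proposition~\ref{propPG}, together with the isometry property $\|q\|_{\hat\MM_\mu} = \|R_{\HH_\mu}^{-1}B_\mu q\|_{\HH_\mu}$ from \eqref{Mu2} and Remark~\ref{remcond3}. The key observation is that $\delta$-proximality \eqref{delta-prox} says that for each $q \in W$ the \emph{optimal} test function $v_q^{\mathrm{opt}} := R_{\HH_\mu}^{-1}B_\mu q$ and its projection $P_{\HH_\mu,V}v_q^{\mathrm{opt}}$ differ by at most $\delta \|v_q^{\mathrm{opt}}\|_{\HH_\mu}$, so the perturbed test space is uniformly close to the optimal one.

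First I would establish the discrete inf-sup bound \eqref{inf-sup-0}, since the quasi-optimality estimates follow from it by the standard Babuska argument. Fix $q \in W$, set $v_q^{\mathrm{opt}} = R_{\HH_\mu}^{-1}B_\mu q \in \HH_\mu$ and $v_q = P_{\HH_\mu,V}v_q^{\mathrm{opt}} \in V$. Using that the $\HH_\mu$-orthogonal projection is self-adjoint and idempotent, one has $(v_q^{\mathrm{opt}}, v_q)_{\HH_\mu} = (v_q^{\mathrm{opt}}, P_{\HH_\mu,V}v_q^{\mathrm{opt}})_{\HH_\mu} = \|v_q\|_{\HH_\mu}^2$, hence $b_\mu(q,v_q) = (v_q^{\mathrm{opt}}, v_q)_{\HH_\mu} = \|v_q\|_{\HH_\mu}^2$. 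On the other hand, $\delta$-proximality together with the Pythagorean identity $\|v_q^{\mathrm{opt}}\|_{\HH_\mu}^2 = \|v_q\|_{\HH_\mu}^2 + \|(I-P_{\HH_\mu,V})v_q^{\mathrm{opt}}\|_{\HH_\mu}^2 \le \|v_q\|_{\HH_\mu}^2 + \delta^2\|v_q^{\mathrm{opt}}\|_{\HH_\mu}^2$ gives $\|v_q\|_{\HH_\mu} \ge \sqrt{1-\delta^2}\,\|v_q^{\mathrm{opt}}\|_{\HH_\mu} = \sqrt{1-\delta^2}\,\|q\|_{\hat\MM_\mu}$. Combining,
\begin{equation*}
\sup_{v\in \hf}\frac{b_\mu(q,v)}{\|v\|_{\HH_\mu}} \ge \frac{b_\mu(q,v_q)}{\|v_q\|_{\HH_\mu}} = \|v_q\|_{\HH_\mu} \ge \sqrt{1-\delta^2}\,\|q\|_{\hat\MM_\mu},
\end{equation*}
which is \eqref{inf-sup-0} after dividing by $\|q\|_{\hat\MM_\mu}$.

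Next I would derive \eqref{BAP1}. Since $p_{W,V}(\mu)$ solves \eqref{petgalopt} on the test space $\tilde Y_W$ and $p(\mu)$ solves the exact equation, Galerkin orthogonality $b_\mu(p(\mu)-p_{W,V}(\mu),v)=0$ holds for all $v \in \tilde Y_W$. For arbitrary $q \in W$ write $e := p_{W,V}(\mu)-q \in W$; then by \eqref{inf-sup-0} applied to $e$ and the orthogonality,
\begin{equation*}
\sqrt{1-\delta^2}\,\|e\|_{\hat\MM_\mu} \le \sup_{v\in \hf}\frac{b_\mu(e,v)}{\|v\|_{\HH_\mu}} = \sup_{v\in \hf}\frac{b_\mu(p(\mu)-q,v)}{\|v\|_{\HH_\mu}} \le \|p(\mu)-q\|_{\hat\MM_\mu},
\end{equation*}
where the last step uses the continuity constant $C_b=1$ from \eqref{infsup2}; here, to use orthogonality one checks $v_e := P_{\HH_\mu,V}(R_{\HH_\mu}^{-1}B_\mu e) \in \tilde Y_W$ realizes the sup up to the factor $\sqrt{1-\delta^2}$ exactly as above. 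A triangle inequality $\|p(\mu)-p_{W,V}(\mu)\|_{\hat\MM_\mu} \le \|p(\mu)-q\|_{\hat\MM_\mu} + \|e\|_{\hat\MM_\mu}$ then yields a bound with constant $1 + (1-\delta^2)^{-1/2}$; to sharpen it to $(1-\delta)^{-1}$ one instead uses the standard refined argument (choosing $q$ to be the best approximation and using that $p_{W,V}(\mu)-q$ is $\hat\MM_\mu$-orthogonal in the appropriate sense, or the sharper Babuska--Aziz constant), giving \eqref{BAP1}. Finally, for \eqref{BAP2} I would bound $\|u(\mu)-u_V(\mu)\|_{\HH_\mu} = \|u_V(\mu)\|_{\HH_\mu}$ using $u(\mu)=0$ from \eqref{umu}: from the first equation of \eqref{system-disc}, $\langle R_{\HH_\mu}u_V(\mu),v\rangle = \langle f - B_\mu p_{W,V}(\mu),v\rangle$ for $v \in V$, so $\|u_V(\mu)\|_{\HH_\mu} = \|P_{\HH_\mu,V}R_{\HH_\mu}^{-1}(f-B_\mu p_{W,V}(\mu))\|_{\HH_\mu} \le \|R_{\HH_\mu}^{-1}B_\mu(p(\mu)-p_{W,V}(\mu))\|_{\HH_\mu} = \|p(\mu)-p_{W,V}(\mu)\|_{\hat\MM_\mu}$, and adding this to \eqref{BAP1} gives the factor $2/(1-\delta)$.

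The main obstacle is obtaining the \emph{sharp} constant $1/(1-\delta)$ in \eqref{BAP1} rather than the cruder $1 + 1/\sqrt{1-\delta^2}$ that the naive triangle-inequality route produces. The sharp bound requires the more careful argument, essentially the one underlying the Xu--Zikatanov identity for Petrov--Galerkin methods: one writes $p(\mu)-p_{W,V}(\mu) = (I-\Pi)(p(\mu)-q)$ for the (oblique) projection $\Pi$ onto $W$ along the annihilator of $\tilde Y_W$, and bounds $\|I-\Pi\|$ by $1/\sqrt{1-\delta^2}$ — but in fact the geometry here, where the perturbed test space is a projection of the optimal one, allows the improvement to $1/(1-\delta)$ via estimating the error $p(\mu)-p_{W,V}(\mu)$ directly through its residual and using $\delta$-proximality once more on the near-optimal test function; I would follow the argument in \cite{DHSW,W}, which is exactly tailored to this situation. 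The inf-sup bound \eqref{inf-sup-0} and the $u$-component estimate are routine once the projection identities are set up; only the constant-chasing in \eqref{BAP1}--\eqref{BAP2} needs care.
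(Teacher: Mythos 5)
Your argument for the inf-sup estimate \eqref{inf-sup-0} is correct and is essentially the computation in Proposition~\ref{propstab}, to which the paper delegates that part of the claim. Your treatment of the $u$-component estimate in \eqref{BAP2} is also sound, using the projection characterization $u_{V,W}(\mu) = P_{\HH_\mu,V}R_{\HH_\mu}^{-1}(f-B_\mu p_{W,V}(\mu))$ rather than the paper's Cauchy--Schwarz route, but arriving at the same bound.

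Your argument for \eqref{BAP1}, however, contains a gap that you yourself flag but never close. The triangle-inequality argument you actually carry out yields the constant $1 + (1-\delta^2)^{-1/2}$, which does \emph{not} imply the claimed $(1-\delta)^{-1}$: for small $\delta$ one has $1 + (1-\delta^2)^{-1/2} > (1-\delta)^{-1}$, so your explicit bound is strictly weaker. You then gesture at a Xu--Zikatanov-type projection bound $\|I-\Pi\| \le (1-\delta^2)^{-1/2}$ and describe $(1-\delta)^{-1}$ as a further ``improvement'' of it, finally deferring to the references \cite{DHSW,W}. But the comparison runs the other way: since $\sqrt{1-\delta^2} \ge 1-\delta$, the Xu--Zikatanov constant $(1-\delta^2)^{-1/2}$ is \emph{sharper} than $(1-\delta)^{-1}$, so had you actually carried that projection argument through, \eqref{BAP1} would already follow with room to spare. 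As written, no executed step establishes the claimed constant.

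The paper's own proof of \eqref{BAP1} is simpler and avoids the discrete inf-sup constant altogether. Let $p_W(\mu)$ be the best $\hat\MM_\mu$-approximation, so $p(\mu)-p_W(\mu) \perp_{\hat\MM_\mu} W$. For any $q\in W$, the isometry \eqref{Mu2} and the Petrov--Galerkin orthogonality from Proposition~\ref{propPG} give
\begin{equation*}
\bigl(p_W(\mu)-p_{W,V}(\mu),q\bigr)_{\hat\MM_\mu}
= b_\mu\bigl(p(\mu)-p_{W,V}(\mu),\,(I-P_{\HH_\mu,V})R_{\HH_\mu}^{-1}B_\mu q\bigr).
\end{equation*}
Since $p_W(\mu)-p_{W,V}(\mu)\in W$, taking the supremum over $q\in W$, using continuity with constant one, $\delta$-proximality \eqref{delta-prox}, and $\|R_{\HH_\mu}^{-1}B_\mu q\|_{\HH_\mu} = \|q\|_{\hat\MM_\mu}$, yields $\|p_W(\mu)-p_{W,V}(\mu)\|_{\hat\MM_\mu}\le \delta\|p(\mu)-p_{W,V}(\mu)\|_{\hat\MM_\mu}$; the triangle inequality then gives $(1-\delta)\|p(\mu)-p_{W,V}(\mu)\|_{\hat\MM_\mu}\le \|p(\mu)-p_W(\mu)\|_{\hat\MM_\mu}$. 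This single duality step, which replaces the Babuska/Cea machinery entirely, is the idea your proposal is missing.
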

\begin{proof} Let $p_W(\mu)$ denote the best $\hat\MM_\mu$-approximation to the exact solution $p(\mu)$ of 
\eqref{system-full}. Then, for any $q\in W$ one has, on account of Remark \ref{remcond3},
\begin{eqnarray*}
(p_W(\mu)-p_{W,V}(\mu),q)_{\hat\MM_\mu}&=& (p(\mu)-p_{W,V}(\mu),q)_{\hat\MM_\mu}
= (B_\mu(p(\mu)-p_{W,V}(\mu)),B_\mu q)_{\HH_\mu'}\\
&=& \langle B_\mu(p(\mu)-p_{W,V}(\mu)), R_{\HH_\mu}^{-1}(B_\mu q)\rangle
 =  b_\mu(p(\mu)-p_{W,V}(\mu), R_{\HH_\mu}^{-1}(B_\mu q))\\
&=& b_\mu(p(\mu)-p_{W,V}(\mu), (I- P_{\HH_\mu,V}) R_{\HH_\mu}^{-1}(B_\mu q)),
\end{eqnarray*}
where we have used Petrov-Galerkin orthogonality, asserted by Proposition \ref{propPG}, in the last step.
By duality, Remark \ref{remcond3}, \eqref{err-res}, respectively \eqref{Mu2}, and \eqref{delta-prox},
we conclude that
\begin{eqnarray*}
\|p_W(\mu)-p_{W,V}(\mu)\|_{\hat\MM_\mu}&=& \sup_{q\in W}\frac{b_\mu(p(\mu)-p_{W,V}(\mu), 
(I- P_{\HH_\mu,V}) R_{\HH_\mu}^{-1}(B_\mu q))}{\|q\|_{\hat\MM_\mu}}\\
&\leq & \frac{ \|p(\mu)-p_{W,V}(\mu)\|_{\hat\MM_\mu}\delta\|R_{\HH_\mu}^{-1}(B_\mu q)\|_{\HH_\mu}}{\|q\|_{\hat\MM_\mu}}
= \delta \|p(\mu)-p_{W,V}(\mu)\|_{\hat\MM_\mu},
\end{eqnarray*}
from which \eqref{BAP1} follows by triangle inequality.

Next  recall from \eqref{umu} that, in view of the first relation in \eqref{system-disc},
\begin{eqnarray*}
  \|u(\mu)- u_{V,W}(\mu)\|^2_{\HH_\mu}& = & \| u_{V,W}(\mu)\|^2_{\HH_\mu} = (u_{V,W}(\mu),u_{V,W}(\mu))_{\HH_\mu}
 =  \langle f- B_\mu p_{W,V}(\mu), u_{V,W}(\mu)\rangle\\
&\leq & \|f- B_\mu p_{W,V}(\mu)\|_{\HH_\mu'}\|u_{V,W}(\mu)\|_{\HH_\mu}
 =  \|p(\mu) - p_{W,V}(\mu)\|_{\hat\MM_\mu} \|u(\mu)-u_{V,W}(\mu)\|_{\HH_\mu},
\end{eqnarray*}
which together with \eqref{BAP1} confirms \eqref{BAP2}.

{Finally, the inf-sup estimate \eqref{inf-sup-0} is an immediate consequence of the more general Proposition \ref{propstab} below.} \end{proof}

We shall use the saddle point formulations to contrive rate-optimal RBMs, namely, on one hand,  for computing {\em truth snapshots} in $W=\MM_\mathcal{N}$
with a suitable $\delta$-proximal test space $\HH_\mathcal{N}$, and on the other hand,  for computing Galerkin projections in reduced spaces 
$W=\MM_n$ again with an associated $\delta$-proximal test space $\HH_n$, whose construction will be discussed in the next section.

To put this into proper perspective,
given any $W\subset \MM_\mu$, the condition \eqref{delta-prox} on a $V\subset \HH_\mu$ implies the
best approximation property {\bf BAP} for the Galerkin solution component $p_{W,V}(\mu)$ of \eqref{system-disc}
with a constant that becomes the closer to one the smaller the relative error becomes in \eqref{delta-prox}.
Moreover, \eqref{BAP2} says that the accuracy of the second ``auxiliary'' component $u_{V,W}(\mu)$ is
automatically  completely
governed by the accuracy of the first component $p_{W,V}(\mu)$.
Finally, \eqref{delta-prox} {\em implies}  {\em  inf-sup stability} of \eqref{system-disc}. It will be shown
below (for later purposes in a little more generality)  that conversely inf-sup stability \eqref{inf-sup-0} {\em implies} $\delta$-proximality.
In fact, since the bilinear form $a_\mu(v,w):= \langle R_{\HH_\mu}v,w\rangle = (v,w)_{\HH_\mu}$ is trivially $\HH_\mu$-elliptic
with coercivity and continuity constants $c_a(\mu)= C_a(\mu)=1$ (see \eqref{1.2.1}), we could have
derived the best approximation property {\bf BAP} \eqref{BAP2} {\em directly} from a uniform inf-sup condition
 from standard facts about  general saddle point problems,
see e.g. \cite{BF}. We have presented the relatively short self-contained derivation in order to identify the
precise constants and to bring out the particular role of the $\delta$-proximality condition \eqref{delta-prox}.
As we shall show later both conditions \eqref{delta-prox} and \eqref{inf-sup-0} can be used algorithmically 
to ensure stability of the saddle point problem and hence tightness of corresponding residual based surrogates.

The above discussion draws essentially on the use of the particular norm \eqref{Mu2}. In the context of
classical  saddle point problems, such as the Stokes system, it is more convenient to work
with the ``original'' $\MM_\mu$-norm related to $\|\cdot\|_{\hat\MM_\mu}$ by \eqref{eq:M-equiv}.
The following proposition clarifies the announced  interrelation between an inf-sup condition and $\delta$-proximality.

\begin{prop}
\label{propstab}
As before assume that $V\subset \HH_\mu$ and let $W\subset \MM_\mu$,   $0 \le \delta < 1$, and $\lambda > 0$. Consider the two   conditions:
\begin{enumerate}
  \item \label{stab1}
  \begin{equation}
  \label{stabcond}
  \|(I-\Ps) R_{\oldHs}^{-1}\bo_\mu  q\|_{\oldHs}\leq \delta \|R_{\oldHs}^{-1}\bo_\mu q\|_{\oldHs},\quad \forall\,\, q\in W,
  \end{equation}
  \item 
  \begin{equation}
  \label{infsupmu}
  \inf_{q\in W} \sup_{v\in V}\frac{b_\mu(q, v)}{\|v\|_{\oldHs}\|q\|_{\MM_\mu}} \geq \lambda.
  \end{equation}
\end{enumerate}
Then (i) implies (ii) with constant $\lambda = c_M \sqrt{1-\delta^2}$. Conversely, (ii) implies (i) with constant
$\delta = \sqrt{1-C_M^2 \lambda^2}$ i.e., $\lambda = C_M^{-1} \sqrt{1-\delta^2}$,
where $c_M$ and $C_M$ are the constants form the norm equivalence \eqref{eq:M-equiv}.
\end{prop}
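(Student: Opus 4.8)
The plan is to work entirely in the Hilbert space $V\subset\HH_\mu$ equipped with the $\HH_\mu$-inner product, exploiting that for each $q\in W$ the relevant object is the $\HH_\mu$-orthogonal decomposition of the supremizer $v_q:=R_{\oldHs}^{-1}\bo_\mu q$ into its $V$-component $\Ps v_q$ and its orthogonal complement $(I-\Ps)v_q$. The key algebraic identity I would establish first is that, for every $q\in W$,
\begin{equation*}
\sup_{v\in V}\frac{b_\mu(q,v)}{\|v\|_{\oldHs}}=\|\Ps v_q\|_{\oldHs}.
\end{equation*}
This follows exactly as in the short argument after Remark \ref{remcond2}: by the variational characterization of the supremizer, $b_\mu(q,v)=(v_q,v)_{\oldHs}=(\Ps v_q,v)_{\oldHs}$ for all $v\in V$ (since $v\in V$ annihilates the orthogonal part), so the restricted supremum is $\|\Ps v_q\|_{\oldHs}$. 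Combining with the Pythagorean identity $\|\Ps v_q\|_{\oldHs}^2=\|v_q\|_{\oldHs}^2-\|(I-\Ps)v_q\|_{\oldHs}^2$ and recalling from \eqref{Mu2} that $\|v_q\|_{\oldHs}=\|q\|_{\mnew}$, I get the exact formula
\begin{equation*}
\sup_{v\in V}\frac{b_\mu(q,v)}{\|v\|_{\oldHs}\|q\|_{\mnew}}=\sqrt{1-\left(\frac{\|(I-\Ps)v_q\|_{\oldHs}}{\|v_q\|_{\oldHs}}\right)^2}.
\end{equation*}

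From here both implications are immediate. For (i)$\Rightarrow$(ii): condition \eqref{stabcond} says the ratio inside the square root is at most $\delta$, so the right-hand side is at least $\sqrt{1-\delta^2}$; then I pass from the $\mnew$-norm to the $\MM_\mu$-norm using the lower bound $\|q\|_{\mnew}\ge c_M\|q\|_{\MM_\mu}$ from \eqref{eq:M-equiv}, which costs a factor $c_M$ and gives $\lambda=c_M\sqrt{1-\delta^2}$. For (ii)$\Rightarrow$(i): I first convert \eqref{infsupmu} into the $\mnew$-norm using $\|q\|_{\MM_\mu}\ge C_M^{-1}\|q\|_{\mnew}$, obtaining a lower bound $C_M^{-1}\lambda$ for the $\mnew$-normalized supremum; by the exact identity this forces $\|(I-\Ps)v_q\|_{\oldHs}/\|v_q\|_{\oldHs}\le\sqrt{1-C_M^2\lambda^2}$, which is precisely \eqref{stabcond} with $\delta=\sqrt{1-C_M^2\lambda^2}$. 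The stated equivalence $\lambda=C_M^{-1}\sqrt{1-\delta^2}$ is just a rearrangement. (As a byproduct this also proves \eqref{inf-sup-0} in Proposition \ref{prop:stab2}, since there $\MM_\mu$ is already the $\mnew$-norm and $c_M=C_M=1$.)

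I do not expect a genuine obstacle here; the whole proposition is essentially the observation that $\HH_\mu$-orthogonal projection of the supremizer is the bridge between proximality (a statement about the orthogonal complement being small) and inf-sup stability (a statement about the projected part being large), linked by Pythagoras. The only point requiring minor care is bookkeeping of the two norm-equivalence constants $c_M,C_M$ and making sure they land on the correct side in each direction — one must not mix up which inequality in \eqref{eq:M-equiv} is used for the forward versus the converse implication. A secondary subtlety is that one should note $v_q\neq 0$ for $q\neq 0$, so that the normalization is legitimate; this is guaranteed because $R_{\oldHs}^{-1}\bo_\mu$ is injective, which was already recorded in the discussion following \eqref{Mu2}.
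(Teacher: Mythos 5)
Your argument is the same as the paper's: both proofs rest on the identity $\sup_{v\in V}b_\mu(q,v)/\|v\|_{\HH_\mu}=\|P_{\HH_\mu,V}R_{\HH_\mu}^{-1}B_\mu q\|_{\HH_\mu}$ (coming from Remark \ref{remcond2}), combine it with the Pythagorean decomposition $\|v_q\|^2=\|\Ps v_q\|^2+\|(I-\Ps)v_q\|^2$, note $\|v_q\|_{\HH_\mu}=\|q\|_{\mnew}$, and then pass between the $\hat\MM_\mu$- and $\MM_\mu$-norms by \eqref{eq:M-equiv}. The paper reformulates each condition into the form $\|\Ps v_q\|\ge(\cdots)\|q\|_{\mnew}$ (resp.\ $\|q\|_{\MM_\mu}$) before comparing, whereas you phrase it as one exact formula $\sup(\cdots)/\|q\|_{\mnew}=\sqrt{1-r^2}$; that is cosmetically cleaner but substantively identical. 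One small bookkeeping remark: your intermediate step correctly yields $\sqrt{1-r^2}\ge C_M^{-1}\lambda$, from which $r\le\sqrt{1-C_M^{-2}\lambda^2}$, i.e.\ $\delta=\sqrt{1-\lambda^2/C_M^2}$ and $\lambda=C_M\sqrt{1-\delta^2}$ — not $\sqrt{1-C_M^2\lambda^2}$ and $\lambda=C_M^{-1}\sqrt{1-\delta^2}$ as you (and the proposition's statement) wrote in the last line. The paper's proof ends by appealing to norm equivalence without spelling out this final constant, so the slip in the statement propagated silently; your explicit derivation actually corrects it up to the very last substitution.
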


 Note that  when $c_M = C_M = 1$, e.g. in case we use the $\mnew$-norm for $\MM_\mu$, both stability conditions are equivalent.

\noindent
\begin{proof} We reformulate (i) and (ii) in terms of equivalent conditions that can be more easily compared. First, squaring   \eqref{stabcond}  and using that $P_{\HH_\mu,V}$ is the $\oldHs$-orthogonal projector, we obtain
\begin{align*}
  \|R_{\oldHs}^{-1}\bo_\mu  q\|_{\oldHs}^2 - \|P_{\HH_\mu,V} R_{\oldHs}^{-1}\bo_\mu  q\|_{\oldHs}^2 & \le \delta^2 \|R_{\oldHs}^{-1}\bo_\mu q\|_{\oldHs}^2, & \forall q & \in W,
\end{align*}
which is equivalent to
\begin{align*}
  \sqrt{1 - \delta^2} \|R_{\oldHs}^{-1}\bo_\mu  q\|_{\oldHs} & \le \|P_{\HH_\mu,V} R_{\oldHs}^{-1}\bo_\mu  q\|_{\oldHs}, & \forall q & \in W.
\end{align*}
By the definition \eqref{Mu2} of the graph norm $\|\cdot\|_{\mnew}$, this is equivalent to 
\begin{align}
  \|P_{\HH_\mu,V} R_{\oldHs}^{-1}\bo_\mu  q\|_{\oldHs} & \ge \sqrt{1 - \delta^2} \|q\|_{\mnew}, & \forall q & \in W.
  \label{eq:infsup-proof-2}
\end{align}

Next, to reformulate \eqref{infsupmu}. Obviously the inf-sup condition is equivalent to
\begin{align}
  \sup_{v\in V}\frac{b_\mu(q, v)}{\|v\|_{\oldHs}} & \geq \lambda \|q\|_{\MM_\mu}, & \forall q & \in W.
  \label{eq:infsup-proof-1}
\end{align}
From  \eqref{supremizer} in Remark \ref{remcond2} we know that the left hand side is maximized by the function $v = P_{\HH_\mu,V} R_{\oldHs}^{-1}\bo_\mu  q$ which yields
\begin{eqnarray}
\label{supb}
\sup_{v\in V}\frac{b_\mu(q, v)}{\|v\|_{\oldHs}} &=& \frac{\langle P_{\HH_\mu,V} R_{\oldHs}^{-1}\bo_\mu  q, \bo_\mu  q \rangle}{\|P_{\HH_\mu,V} R_{\oldHs}^{-1}\bo_\mu  q\|_{\oldHs}} 
=  \frac{\langle R_{\oldHs}P_{\HH_\mu,V} R_{\oldHs}^{-1}\bo_\mu  q, R_{\oldHs}^{-1} \bo_\mu  q \rangle}{\|P_{\HH_\mu,V} R_{\oldHs}^{-1}\bo_\mu  q\|_{\oldHs}} 
= \|P_{\HH_\mu,V} R_{\oldHs}^{-1}\bo_\mu  q\|_{\oldHs}.
\end{eqnarray}
Substituting the right hand side in the left hand side of the condition \eqref{eq:infsup-proof-1}, yields
\begin{align*}
  \|P_{\HH_\mu,V} R_{\oldHs}^{-1}\bo_\mu  q\|_{\oldHs} & \geq \lambda \|q\|_{\MM_\mu}, & \forall q & \in W.
\end{align*}
We see that this is condition is identical to \eqref{eq:infsup-proof-2} up to an equivalence of the $\|\cdot\|_{\MM_\mu}$ and $\|\cdot\|_{\mnew}$ norms, which proves the assertion. \end{proof}

In summary, given trial space $W\subset \MM_\mu$,
a suitable $V\subset \HH_\mu$ such that the Galerkin problem \eqref{system-disc} has the best approximation property
{\bf BAP}, thereby warranting tight residual based surrogates, can be obtained by realizing
\begin{equation}
\label{eq:unsym-infsup-VW}
\inf_{q\in \mf} \sup_{v\in \hf}\frac{b_\mu(q, v)}{\|v\|_{\HH_\mu}\|q\|_{\MM_\mu}}\geq   \beta,
\end{equation}
where $\beta := \min_{\mu\in\mathcal{P}}\beta(\mu)>0$, see \eqref{babuska}.

\subsection{Parameter Dependence, Truth Spaces, and Feasibility}\label{sect:pardep}

Before applying the above findings to the construction of well-conditioned tight surrogates, we need to be a bit more precise about the
{\em parameter dependence} in order to distinguish eventually several relevant scenarios. Notice that
the spaces $\HH_\mu, \MM_\mu$ are allowed to depend on $\mu\in\mathcal{P}$ in a way that they even differ as sets and no parameter independent
reference norm may exist, see Remark \ref{remYdiffer}.
Let
\begin{align}
  \HH & := \bigcap_{\mu\in\mathcal{P}} \HH_\mu, & 
  \MM & := \bigcap_{\mu\in\mathcal{P}} \MM_\mu,
\label{H}
\end{align}
where the intersection is understood in the sense of sets. It is clear that $Y$ and $X$ are linear spaces. 
Although in general, we do {\em not} insist though, that  $\HH$  and $\MM$ are 
 endowed with  norms that are equivalent to all $\|\cdot\|_{\HH_\mu}$ and $\|\cdot\|_{\MM_\mu}$, respectively. 
 However, we do assume in what follows that $\HH, \MM$ are dense in $\HH_\mu$, $\MM_\mu$, respectively,  for all $\mu \in \mathcal{P}$. 
 Moreover, on account of the compactness of $\mathcal{P}$, we can always define (possibly stronger) norms
 \begin{equation}
 \label{supnorms}
\|v\|_{\HH}:= \sup_{\mu\in \mathcal{P}} \|v\|_{\HH_\mu},\quad \|q\|_{\MM}:= \sup_{\mu\in\mathcal{P}} \|q\|_{\MM_\mu}
\end{equation}
for $\HH,\MM$, respectively. 
Moreover,   since $\HH = \bigcap_{\mu \in \mathcal{P}} \HH_\mu$ is assumed to be dense in $\HH_\mu$, for the inf-sup condition \eqref{babuska} it suffices to take for $V=\HH_\mu$ the supremum over $\HH$ instead of   $\HH_\mu$, i.e.,
there exist subspaces $\hf \subset \HH$ for which the  discrete inf-sup condition \eqref{infsupmu} holds uniformly in the parameter $\mu$.

Of course, this setting covers, in particular, the special situation - usually considered in the RBM context -  that all the spaces 
$\HH_\mu, \MM_\mu$, $\mu\in \mathcal{P}$, agree
 as sets, respectively, and where the respective norms are uniformly equivalent, i.e., there exist
 constants $0< c_\circ, C_\circ<\infty$ such that
\begin{equation}
\label{allHequiv}
c_\circ \|v\|_{\HH} \leq \|v\|_{\HH_\mu}\leq C_\circ \|v\|_{\HH},\quad \mu\in \mathcal{P},\,\, v\in \HH,
\end{equation}
and
\begin{equation}
\label{allMequiv}
c_\circ \|q\|_{\MM} \leq \|q\|_{\MM_\mu}\leq C_\circ \|q\|_{\MM},\quad \mu\in \mathcal{P},\,\, q\in \MM.
\end{equation}
Recall from Remark \ref{remYdiffer} that for parametric  transport equations \eqref{allMequiv} is valid but \eqref{allHequiv} does {\em not} hold.

At any rate, due to the denseness of $\MM$ and $\HH$,  we can find sufficiently large but {\em finite dimensional truth spaces} $\HH_\truth \subset \HH, \MM_\truth \subset \MM$, typically finite element spaces, that can provide a desired target accuray of the {\em truth model}.
Since we are dealing here with problems for which standard tight a posteriori bounds are not available, we comment first 
on the {\em truth certification}.  {Note that this is particularly  important for convection dominated convection diffusion equations when
a complete resolution of very steep layers is prohibitively expensive 
  even for the truth solution.} We know that $\|p(\mu)- q\|_{\hat\MM_\mu}=\|f- B_\mu q\|_{\HH_\mu'}$. In order to be able to
accurately evaluate the residual in the dual norm $\|\cdot\|_{\HH_\mu'}$ one needs in {\em any setting} suitable assumptions on 
{\em data oscillation}, see e.g. \cite{CKNS, V2, DHSW}. One way to express this is to require that the projection of $R_{\HH_\mu}^{-1}f $ into the test space $\HH_\mathcal{N}$
captures enough of $R_{\HH_\mu}^{-1}f$.  To this end, we make use of the following simple
observation.

\begin{rem}
\label{WMX}
Assume that \eqref{allMequiv} holds.
Given $W\subseteq \MM_\mathcal{N}$ and any $\delta\in (0,1)$, there exists a finite dimensional test space $V \subset \HH$ such that
\begin{equation}
\label{WVdelta}
\inf_{v\in V}\|q - R_{\hat\MM_\mu}^{-1}B_\mu^* v\|_{\hat\MM_\mu} \leq \delta \|q\|_{\hat\MM_\mu},\quad q\in \mathcal{M}_{\MM} + W, \,\,\mu\in \mathcal{P},
\end{equation}
 {which implies}
\begin{equation}
\label{res-est-W}
(1-\delta^2)^{1/2} \| f- B_\mu p\|_{\HH_\mu'} \leq \| P_{\HH_\mu,V}R_{\HH_\mu}^{-1}(f- B_\mu p)\|_{\HH_\mu} \leq \| f- B_\mu p\|_{\HH_\mu'},
\quad p\in W.
\end{equation}
 In the following, we denote by $\mathcal{V}(W, \delta)$ all test spaces in $\HH_\mathcal{N}$ which satisfy the stability condition \eqref{WVdelta}.
\end{rem}

\begin{proof} {Since $\mathcal{M}_{\MM}$ is compact there is a linear space $V_\mathcal{M}$ such that
\[
\inf_{v\in V_\mathcal{M}}\|q - R_{\hat\MM_\mu}^{-1}B_\mu^* v\|_{\hat\MM_\mu} \leq \delta \inf_{w \in W} \|q + w\|_{\hat\MM_\mu},\quad q\in \mathcal{M}_{\MM}, \,\,\mu\in \mathcal{P}.
\]
It follows that the space $V_\mathcal{M} + W$ satisfies \eqref{WVdelta}.} Furthermore, since
\begin{equation}
\label{thesame}
\inf_{v\in V}\|q - R_{\hat\MM_\mu}^{-1}B_\mu^* v\|_{\hat\MM_\mu} \leq \delta \|q\|_{\hat\MM_\mu}\quad\Longleftrightarrow\quad
\inf_{v\in V}\|  R_{\hat\MM_\mu}^{-1}B_\mu q-  v\|_{\hat\MM_\mu} 
 \leq \delta \|R_{\hat\MM_\mu}^{-1}B_\mu  q\|_{ \HH_\mu},
\end{equation}
and since $\inf_{v\in V}\|  R_{\hat\MM_\mu}^{-1}B_\mu q-  v\|_{\hat\MM_\mu}  = \|(I- P_{\HH_\mu,V})R_{\hat\MM_\mu}^{-1}B_\mu  q\|_{\HH_\mu}$,
the assertion follows. \end{proof}

We shall comment later how \eqref{WVdelta} can be realized, see also \cite{DHSW,W} for a more detailed  discussion.
Since 
$ r_{V,W}(p,f):= P_{\HH_\mu,V}R_{\HH_\mu}^{-1}(f- B_\mu p)$ is given by
\begin{equation}
\label{res-truth-2}
\langle R_{\HH_\mu} r_{V,W}(p,f), z\rangle = \langle f- B_\mu p,z\rangle,\quad z\in \HH_\mathcal{N},
\end{equation}
the middle term in \eqref{res-est-W} is computable.

 \begin{rem}
 \label{rem:XYchoice}
 In what follows we shall always assume that for some fixed $\delta_\mathcal{N}<1$ and any given $\MM_\mathcal{N} \subset \MM$,
 the finite dimensional space $\HH_\mathcal{N}$ is  contained in $\mathcal{V}(\MM_\mathcal{N},\delta_\mathcal{N})$ satisfying \eqref{WVdelta}.
 Therefore, 
 abbreviating the solution of
\eqref{system-disc} for $W=\MM_\mathcal{N}, V=\HH_\mathcal{N}$, as $p_{\MM_\mathcal{N},\HH_{\mathcal{N}}} =:  p_\mathcal{N}(\mu)\in \MM_\mathcal{N}$,
$u_\mathcal{N}(\mu):= u_{\HH_\mathcal{N},\MM_\mathcal{N}}(\mu)$, we immediately conclude that
\begin{equation}
\label{X-bound}
\|p(\mu)- p_\mathcal{N}(\mu)\|_{\hat\MM_\mu}\leq  (1-\delta^2)^{-1/2} 
\| u_\mathcal{N}(\mu)\|_{\HH_\mu},
\quad \mu\in \mathcal{P}.
\end{equation}
\end{rem}

\begin{rem}
\label{rem:truth}
(i) For any desired target tolerance $\tau$, as soon as the computable quantity $\| u_\mathcal{N}(\mu)\|_{\HH_\mu}$ drops below 
$(1-\delta^2)^{-1/2} \tau$ we know that the truth solution has guaranteed accuracy $\leq \tau$ which can be achieved by the
refinement scheme in \cite{DHSW}. \\
(ii) The above choice of $\HH_\mathcal{N}$ guarantees, by Propositions \ref{prop:stab2},  \ref{propstab}, in particular, that
\begin{equation}
 \inf_{q\in \MM_\mathcal{N}} \sup_{v \in \HH_\mathcal{N}}\frac{b_\mu(q, v)}{\|v\|_{\HH_\mu}\|q\|_{\bar\MM_\mu}}\geq  \xi \sqrt{1-\delta_\mathcal{N}^2}=:  \beta_\mathcal{N} > 0,\quad \mu\in\mathcal{P},
 \,\,\mbox{where}\,\, \xi :=\left\{
 \begin{array}{ll}
 1, & \|\cdot\|_{\bar\MM_\mu}=  \|\cdot\|_{\hat\MM_\mu},\\
 \beta, &  \|\cdot\|_{\bar\MM_\mu}=  \|\cdot\|_{\MM_\mu} ,
 \end{array}\right.
 \label{eq:inf-sup}
\end{equation}
and where $\beta$ is the inf-sup constant from \eqref{eq:unsym-infsup-VW}.
Hence, $\beta_\mathcal{N}$ can, in principle be driven as close as one wishes to one or $\beta$, depending on the 
choice of norm for $\MM_\mu$. 
\end{rem}

 Note that the above statements do not contradict the possible case that the norms $\|\cdot\|_{\HH_\mu}$ or $\|\cdot \|_{\MM_\mu}$, $\mu\in\mathcal{P}$, are {\em not} equivalent to a single reference norm.

  In the following, we shall often not distinguish
for simplicity of exposition between truth and full spaces unless explicitly stated. In particular, whenever we speak of a 
computation in $\HH_\mu, \MM_\mu$ we refer to the truth spaces endowed with the norms 
$\|\cdot\|_{\HH_\mu}, \|\cdot\|_{\hat\MM_\mu}$, respectively.

Finally, the way how the bilinear forms depend on $\mu$ is important for practical {\em feasibility}.
We assume that the dependence of the bilinear forms on $\mu$ is {\em affine} in the usual sense,
i.e.
\begin{equation}
\label{affine}
b_\mu(\cdot,\cdot)= \sum_{k=1}^{m_B}\Theta^b_k(\mu)b_k(\cdot,\cdot),
\end{equation}
with parameter independent bilinear forms 
$b_k(\cdot,\cdot)$, $k=1,\ldots,m_B$,
and smooth functions $ \Theta_k^b$.

\section{Stabilization}
\label{sec:stab}
Suppose we are given a pair  $W= \MM_n\subset \MM_\mathcal{N}$, $\HH_n\subset \HH_\mathcal{N}$ of finite dimensional spaces
with bases   $\Phi_n=\{\phi_j\}_{j=1}^{n}$  and $\Psi_n=\{\psi_j\}_{j=1}^{m(n)}$, respectively. Our convention will always be that the index $n$ reflects the dimension of $\MM_n$ while generally ${\rm dim}\,\HH_n =m(n)\geq n$. While the purpose of $\MM_n$ is to approximate
$\mathcal{M}_{\MM}$ the role of $\HH_n$ is, in view of Proposition \ref{prop:stab2},  to guarantee uniform inf-sup stability.
More precisely, whenever $\HH_n$ is $\delta$-proximal for $\MM_n$ \eqref{delta-prox} for some $\delta<1$, one has
\begin{equation}
\label{eq:unsym-infsup-XnYn}
\inf_{q\in \MM_n} \sup_{v\in \HH_n}\frac{b_\mu(q, v)}{\|v\|_{\HH_\mu}\|q\|_{\hat\MM_\mu}}\geq \sqrt{1-\delta^2}, 
\quad \mu\in\mathcal{P}.
\end{equation}  
 Hence, a natural strategy is to choose a constant $0< \zeta < 1$, replace the right hand side of \eqref{eq:unsym-infsup-XnYn} by $\zeta \sqrt{1-\delta^2}$ and {\em enrich} the space $\HH_n$ until this relaxed inf-sup condition is   valid. The closer one whishes  $p_{\MM_n,\HH_n}(\mu)$ to be to the best $\hat\MM_\mu$-approximation $P_{\hat\MM_\mu,\MM_n}
p(\mu)$, the closer $\zeta$ should be chosen to one, see \eqref{BAP1}. In particular, {\em any} $\zeta <1$ is in principle feasible.

We shall formulate actually two variants of such a {\em stabilization scheme} which apply under slightly different
assumptions.
\subsection{Inf-sup stabilization}

The first   natural idea which  has already been used in \cite{Rozza,GV,GV2} is to enrich $\HH_n$ by the supremizer for the infimizing 
parameter $\bar\mu$. More precisely,   we first search for a parameter $\bar{\mu} \in \mathcal{P}$ and a function $\bar{q} \in \MM_n$ for which the inf-sup condition \eqref{infsupmu} is worst, i.e. 
\begin{equation}
\label{arginfsup}
\sup_{v\in \HH_n}\frac{b_{\bar{\mu}}(\bar{q}, v)}{\|v\|_{\HH_{\bar{\mu}}}\|\bar{q}\|_{\MM_{\bar{\mu}}}} = \inf_{\mu\in \mathcal{P}} \left( \inf_{q\in \MM_n} \sup_{v\in \HH_n}\frac{b_\mu(q, v)}{\|v\|_{\oldHs}\|q\|_{\MM_\mu}} \right).
\end{equation}
If this worst case inf-sup constant does not exceed yet a desired uniform lower bound, $\HH_n$ does not contain
an effective supremizer  {for $\bar{\mu}, \bar{q}$,} yet.  However, since the truth space satisfies the uniform inf-sup condition \eqref{eq:inf-sup}  there exists a good supremizer in the truth space which, on account of  Remark \ref{remcond2}, is given by
\[
\bar{v} = R_{\HH_{\bar{\mu}}}^{-1}\bo_{\bar{\mu}} \bar{q} = \argmax_{v \in \HH_{\bar{\mu}}}\frac{b_{\bar{\mu}}(\bar{q}, v)}{\|v\|_{\HH_{\bar{\mu}}} \|\bar{q}\|_{\MM_{\bar{\mu}}}},
\]
and provides the enrichment
\begin{equation}
  \HH_n \to {\rm span} \{ \HH_n, R_{\oldHs}^{-1}\bo_\mu  \bar{q}\}.
  \label{eq:update}
\end{equation}
This strategy can now be applied recursively until we reach a satisfactory uniform inf-sup condition for the reduced spaces.

Of course, three questions  immediately arise:
\begin{itemize}

\item[(i)] Is the computation of $\bar{\mu}$ and $\bar{q}$ feasible?

\item[(ii)]
Does this process terminate after finitely many  steps?
\item[(iii)]
If so, is the number of  necessary stabilization steps affordable?
\end{itemize}

 Assuming for the moment
to have positive answers to (ii) and (iii), we first derive
  a suitable {\em offline/online} strategy for an efficient implementation of \eqref{eq:update}. First note that for given $\bar{\mu}$ and $\bar{q}$ the new test function $\bar v:=R_{\HH_{\bar\mu}}^{-1}\bo_{\bar\mu}  \bar{q}$ can be computed by a standard Galerkin scheme
\begin{equation*}
(\bar v,v)_{\HH_{\bar\mu}}= b_{\bar\mu}(\bar q,v),\quad v\in \HH_{\bar\mu}, 
\end{equation*}
so that it remains to solve the optimization problem \eqref{arginfsup} to find $\bar{\mu}$ and $\bar{q}$. To this end, we first rewrite the inf-sup condition in terms of the coefficient vectors with respect to the reduced bases. To describe this, we denote the corresponding Gramians, respectively cross-Gramians as
\begin{equation}
\label{matrices}
\begin{array}{ccccc}
\bf{R}_{\oldHs}&:=& (\Psi,\Psi)_{\oldHs}&:= &\big((\psi_i,\psi_j)_{\oldHs}\big)_{i,j=1}^{m},\\
\bf{R}_{\MM_\mu} &:=& ( \Phi,\Phi)_{\MM_\mu}&:= & \big(( \phi_i,\phi_j)_{\MM_\mu}\big)_{i,j=1}^{n},\\
\bbo_\mu & :=& b_\mu(\Phi,\Psi)& := &\big(b_\mu(\phi_i,\psi_j)\big)_{j,i=1}^{m,n}.
\end{array}
\end{equation}
Practical feasibility relies on the following
\begin{assumption}
\label{assR}
In addition to $\bo_\mu$ the Riesz maps $R_{\HH_\mu}$ and $R_{\MM_\mu}$ depend affinely on the parameter $\mu$.
\end{assumption}
\begin{rem}
\label{remR}
Under Assumption \ref{assR} all the matrices in \eqref{matrices} can be computed online. 
Thus, by rewriting the left hand side of the inf-sup condition as
\begin{equation*}
\inf_{\boldsymbol{q} \in \R^n} \sup_{\boldsymbol{v} \in \R^{m(n)}} \frac{\boldsymbol{v}^T \bbo_\mu \boldsymbol{q}}{ \left(\boldsymbol{v}^T \bf{R}_{\oldHs} \boldsymbol{v}\right)^{1/2}  \left(\boldsymbol{q}^T \bf{R}_{\MM_\mu} \boldsymbol{q}\right)^{1/2} }
\label{eq:lpp-problem}
\end{equation*}
 we are left for each parameter $\mu$ with an optimization problem only of the size of the dimensions $m(n), n$ of $\HH_n$ and $\MM_n$, respectively.
\end{rem}
 In order to find an infimizing  $\boldsymbol{q} \in \R^{n}$ we eliminate the discrete Riesz maps in the denominator by factoring them as 
\begin{align}
\label{Rchol}
  \bf{R}_{\oldHs} & = \boldsymbol{L}_{\HH_\mu}^T \boldsymbol{L}_{\HH_\mu} &
  \bf{R}_{\MM_\mu} & = \boldsymbol{L}_{\MM_\mu}^T \boldsymbol{L}_{\MM_\mu}.
\end{align}
Here, one can think of a Cholesky factorization or of a spectral decomposition $ \boldsymbol{L}_{\HH_\mu}
=\Lambda_{\MM_\mu}^{1/2} \bf{Q}_{\MM_\mu}$ where the columns of $\bf{Q}_{\MM_\mu}$ form an eigenbasis and $\Lambda_{\MM_\mu}$
is the diagonal matrix with the eigenvalues (in descending order) on the diagonal.
 Replacing $\boldsymbol{v}$ by $\boldsymbol{L}_{H_\mu} \boldsymbol{v}$ and $\boldsymbol{q}$ by $\boldsymbol{L}_{\MM_\mu} \boldsymbol{q}$ and defining $\boldsymbol{D}_\mu := \boldsymbol{L}_{H_\mu}^{-T} \bbo_\mu \boldsymbol{L}_{\MM_\mu}^{-1}$ we find that
\begin{equation}
\label{infDmu}
  \inf_{\boldsymbol{q} \in \R^n} \sup_{\boldsymbol{v} \in \R^{m(n)}} \frac{\boldsymbol{v}^T \bbo_\mu \boldsymbol{q}}{ \left(\boldsymbol{v}^T \bf{R}_{\oldHs} \boldsymbol{v}\right)^{1/2}  \left(\boldsymbol{q}^T \bf{R}_{\MM_\mu} \boldsymbol{q}\right)^{1/2} }
  = \inf_{\boldsymbol{q} \in \R^n} \sup_{\boldsymbol{v} \in \R^{m(n)}} \frac{\boldsymbol{v}^T  \boldsymbol{D}_\mu \boldsymbol{q}}{\| \boldsymbol{v} \|_{\ell_2} \| \boldsymbol{q} \|_{\ell_2} }.
\end{equation}
and hence, one easily verifies the following fact.
\begin{rem}
\label{remSVD}
For any given $\mu$ the corresponding inf-sup constant is the smallest singular value of $\boldsymbol{D}_\mu$ and the optimal $\boldsymbol{q}$ is the corresponding right singular vector. Since the computational cost of the singular value decomposition  is polynomial in the dimensions of the reduced bases, we can afford to compute all the inf-sup constants for a sufficiently large sample set $\mathcal{S}\subset \mathcal{P}$ of parameters, yielding the optimal $\bar{\mu}$.
\end{rem}
 The complete scheme is summarized in Algorithm \ref{alg:update-inf-sup} which we formulate for the general norms $\MM_\mu$
 in \eqref{babuska} and the inf-sup constant $\beta_\mathcal{N}$ from \eqref{eq:inf-sup}.

\begin{algorithm}[htb]
  \caption{Update to achieve inf-sup stability.}
  \label{alg:update-inf-sup}
  \begin{algorithmic}[1]
    \Function{Update-inf-sup}{$\HH_n$, $\MM_n$}
    \State Choose $0 < \zeta < 1$.
    \State Select a sufficiently large sample $\mathcal{S} \subset \mathcal{P}$.
    \Repeat
      \For{$\mu \in \mathcal{S}$}
        \State Assemble the Gramians and cross-Gramians $\bf{R}_{\oldHs}$, $\bf{R}_{\MM_\mu}$, $\bbo_\mu$.
        \State Compute the Cholesky decompositions
          \begin{align*}
            \bf{R}_{\oldHs} & = \boldsymbol{L}_{\HH_\mu}^T \boldsymbol{L}_{\HH_\mu} &
            \bf{R}_{\MM_\mu} & = \boldsymbol{L}_{\MM_\mu}^T \boldsymbol{L}_{\MM_\mu}.
          \end{align*}
        \State Determine the smallest singular value $\sigma(\mu)$ and corresponding 
        \Statex \hskip\algorithmicindent \hskip\algorithmicindent \hskip\algorithmicindent right singular vector $\bar{\boldsymbol{q}}_\mu$ of the matrix $\boldsymbol{D}_\mu = \boldsymbol{L}_{\HH_\mu}^{-T} \bbo_\mu \boldsymbol{L}_{\MM_\mu}$.
         
      \EndFor
      \State Set $\bar{\mu} = \min \{ \sigma(\mu) : \mu \in \mathcal{S} \}$
      \State Update $\HH_n \leftarrow {\rm span} \{ \HH_n, R_{\HH_{\bar{\mu}}}^{-1}\bo_{\bar{\mu}}  \bar{q}_{\bar{\mu}}\}$ with $\bar{q}_{\bar{\mu}} = \sum_{i=1}^{n} (\bar{\boldsymbol{q}}_{\bar{\mu}})_i \phi_i$.
    \Until{$\sigma(\bar\mu) \ge  \zeta\beta_\mathcal{N} $} \label{stop1}
    \State \Return{$\HH_n$}
    \EndFunction
  \end{algorithmic}
\end{algorithm}

\subsection{Stabilization based on \texorpdfstring{$\delta$}{delta}-Proximality }

We shall now formulate an alternative stabilizing scheme. It is related to greedy approximation and  will shed some
light on the above stabilization algorithms regarding the questions (ii), (iii).
The idea is to enrich the space $\HH_n$ to obtain stability based on the equivalent criterion \eqref{stabcond} which 
can be rephrased as
\begin{equation}
\label{eq:inf}
\inf_{\phi \in \HH_n} \|R_{\oldHs}^{-1}\bo_\mu  q - \phi\|_{\oldHs}\leq \delta \|R_{\oldHs}^{-1}\bo_\mu q\|_{\oldHs},\quad \forall\,\, q\in \MM_n,\, \mu\in \mathcal{P}.
\end{equation}
Defining 
\begin{equation}
\MnOne(\mu) := \left\{ q \in \MM_n : \, \|q\|_{\mnew}=\|R_{\oldHs}^{-1}\bo_\mu q\|_{\oldHs}=1 \right\},
\label{eq:manifold1}
\end{equation}
this is equivalent to
\begin{equation}
\sup_{\mu \in \mathcal{P}} \sup_{q \in \MnOne(\mu)} \inf_{\phi \in \HH_n} \|R_{\oldHs}^{-1}\bo_\mu  q - \phi\|_{\oldHs}\leq \delta.
\label{eq:manifold2}
\end{equation}
We can again employ a greedy strategy to search for  {the parameter $\mu \in \mathcal{P}$ and} the element in $\MnOne:= \bigcup_{\mu\in\mathcal{P}}\MnOne(\mu)$ for which the error is worst:
\begin{equation}
(\bar{\mu}, \bar{q}) = \argmax_{\mu \in \mathcal{P}; q \in \MnOne(\mu)} \inf_{\phi \in Y_n} \|R_{\oldHs}^{-1}\bo_\mu  q - \phi\|_{\oldHs}.
\label{eq:stab-greedy}
\end{equation}
As long as the approximation error for $\bar{\mu}$ and $\bar{q}$ exceeds some fixed $\delta \in (0,1)$, we add the best approximation from the full truth space to the reduced basis:
\begin{equation*}
  \HH_n \to {\rm span} \{ \HH_n, R_{\HH_{\bar\mu}}^{-1}\bo_{\bar\mu}  \bar{q}\}.
\end{equation*}
Since, as pointed out below \eqref{Mu2}, $R_{\HH_\mu}^{-1}= B_\mu^{-*}R_{\hat\MM_\mu}B_\mu^{-1}$, we see
that
$(R_{\HH_\mu}^{-1})^{-1} = B_\mu R_{\hat\MM_\mu}^{-1}B_\mu^*$.
Hence, in view of \eqref{Mu2}, we conclude that
\begin{equation}
(\bar{\mu}, \bar{q}) = \argmax_{\mu \in \mathcal{P}; q \in \MnOne(\mu)} \big(\inf_{\phi\in \HH_{n }}\|q -  R_{\hat\MM_\mu}^{-1}B_\mu^*\phi
\|_{\hat\MM_\mu}\big),
\label{eq:stab-greedy2}
\end{equation}
which implies the following observation.
 
\begin{rem}
\label{rem-greedy}
If \eqref{allMequiv} holds so that all the spaces $\MM_\mu$ agree with a parameter independent reference space
$\MM$, the output  $(\bar{\mu}, \bar{q})$ is the result of a greedy approximation   to the compact 
 set $\MM_n^1:= \bigcup_{\mu\in\mathcal{P}} \MM_n^1(\mu)$.
 Therefore, in principle,  the scheme fits into the standard greedy theory in \cite{Buffaetal, BCDDPW, DPW}.
 In fact, by \eqref{allMequiv}, \eqref{affine}, and the fact that $B_\mu^*: \HH_\mu\to (\hat\MM_\mu)'$ is an isometry, 
 the set $R_{\MM}^{-1}B_\mu^* \HH_n$ is a finite dimensional subspace of $\MM$.  
\end{rem}

It remains to find a fast algorithm for the solution of the maximization problem \eqref{eq:stab-greedy}
which will make use of the $\|\cdot\|_{\mnew}$-norm \eqref{Mu2}  for $\MM_\mu$. 
\begin{lemma}
\label{lemstabcond}
 Let  $q= \sum_{j=1}^{n}q_j\phi_j =: \bf{q}^\top \Phi$. Referring to the matrices $\bbo_\mu, \bf{R}_{\oldHs}$ from \eqref{matrices},
 and defining $\bf{R}_{\mnew}:= (\Phi,\Phi)_{\mnew}$, 
one has
\begin{equation}
\label{matrixcond}
\|(I-\Ps)R_{\oldHs}^{-1}\bo_\mu  q\|^2_{\oldHs}=\bf{q}^\top \big(\bf{R}_{\hat \MM_\mu} - \bbo_\mu^\top \bf{R}_{\oldHs}^{-1} \bbo_\mu\big)\bf{q}  .
\end{equation}
\end{lemma}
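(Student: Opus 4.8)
\textit{Proof strategy.} The plan is to exploit that $\Ps$ is the $\oldHs$-orthogonal projector onto $\HH_n$ and to split the left-hand side of \eqref{matrixcond} by the Pythagorean identity
\[
\|(I-\Ps)R_{\oldHs}^{-1}\bo_\mu q\|_{\oldHs}^2 = \|R_{\oldHs}^{-1}\bo_\mu q\|_{\oldHs}^2 - \|\Ps R_{\oldHs}^{-1}\bo_\mu q\|_{\oldHs}^2 ,
\]
after which each of the two terms on the right is rewritten in terms of the coefficient vector $\bf{q}$ and the (cross-)Gramians of \eqref{matrices}.

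For the first term I would simply invoke the definition \eqref{Mu2} of the $\|\cdot\|_{\mnew}$-norm, namely $\|q\|_{\mnew}=\|R_{\oldHs}^{-1}\bo_\mu q\|_{\oldHs}$, together with bilinearity, to get $\|R_{\oldHs}^{-1}\bo_\mu q\|_{\oldHs}^2 = \|q\|_{\mnew}^2 = \bf{q}^\top \bf{R}_{\mnew}\bf{q}$, which is exactly the first summand on the right-hand side of \eqref{matrixcond}. For the second term, set $w:=R_{\oldHs}^{-1}\bo_\mu q$ and compute its $\oldHs$-orthogonal projection onto $\HH_n={\rm span}\,\Psi_n$: its coefficient vector $\bf{c}$ solves the normal equations $\bf{R}_{\oldHs}\bf{c}=\bf{g}$ with $g_j=(\psi_j,w)_{\oldHs}$. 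The key small observation is that, by the defining property \eqref{eq:riesz-Hs} of the Riesz map, $(\psi_j,w)_{\oldHs}=(\psi_j,R_{\oldHs}^{-1}\bo_\mu q)_{\oldHs}=\langle \bo_\mu q,\psi_j\rangle=b_\mu(q,\psi_j)$, which — using the index convention $(\bbo_\mu)_{j,i}=b_\mu(\phi_i,\psi_j)$ from \eqref{matrices} — equals $(\bbo_\mu\bf{q})_j$. Hence $\bf{c}=\bf{R}_{\oldHs}^{-1}\bbo_\mu\bf{q}$, and, using the symmetry of $\bf{R}_{\oldHs}$,
\[
\|\Ps w\|_{\oldHs}^2 = \bf{c}^\top\bf{R}_{\oldHs}\bf{c} = \bf{q}^\top\bbo_\mu^\top\bf{R}_{\oldHs}^{-1}\bbo_\mu\bf{q}.
\]
Subtracting this from the first term yields \eqref{matrixcond}.

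There is no genuine obstacle in this lemma; it is a bookkeeping exercise whose two delicate points are (a) matching the index ordering of the cross-Gramian in \eqref{matrices} so that $\bbo_\mu$, and not its transpose, appears in the normal equations for the projection, and (b) the passage from the abstract quantities $(\psi_j, R_{\oldHs}^{-1}\bo_\mu q)_{\oldHs}$ to the concrete bilinear-form entries $b_\mu(\phi_i,\psi_j)$, which is precisely where the identity $\langle R_{\oldHs}v,w\rangle = (v,w)_{\oldHs}$ of \eqref{eq:riesz-Hs} enters.
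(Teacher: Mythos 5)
Your proof is correct and follows essentially the same route as the paper's: Pythagorean decomposition via the orthogonal projector $\Ps$, identification of $\|R_{\oldHs}^{-1}\bo_\mu q\|_{\oldHs}^2$ with $\bf{q}^\top\bf{R}_{\mnew}\bf{q}$ via \eqref{Mu2}, and computation of the projection's coefficient vector $\bf{R}_{\oldHs}^{-1}\bbo_\mu\bf{q}$ from the normal equations. The only cosmetic difference is that you evaluate $\|\Ps w\|_{\oldHs}^2$ as $\bf{c}^\top\bf{R}_{\oldHs}\bf{c}$ whereas the paper writes it as $\langle \bo_\mu q,\Ps R_{\oldHs}^{-1}\bo_\mu q\rangle$; both reduce to the same matrix expression.
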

\begin{proof}   By orthogonality of $\Ps$ and  \eqref{Mu2},
  we have
\begin{equation*}
  \|(I-\Ps)R_{\oldHs}^{-1}\bo_\mu  q\|^2_{\oldHs} = \|q\|_{\mnew}^2 - \|\Ps R_{\oldHs}^{-1}\bo_\mu  q\|^2_{\oldHs}.
\end{equation*}
By definition, we have $\|q\|_{\mnew}^2=\bf{q}^\top\bf{R}_{\mnew}\bf{q}$.
As for  the second term, note that $\Ps R_{\oldHs}^{-1} \bo_\mu q$ is the Galerkin solution of $R_{\oldHs} z = \bo_\mu q$.
Since for any $w\in \oldHs'$  the coefficient vector $\bf{z}$ of $\Ps R_{\oldHs}^{-1}w$ is given by 
\begin{equation*}
\bf{R}_{\oldHs} \bf{z}=\langle w,\Psi\rangle =: \big(\langle w,\psi_j\rangle\big)_{j=1}^{m},
\end{equation*}
we conclude that for $w:= \bf{q}^\top\bo_\mu \Phi$
one has 
\begin{equation*}
\bf{z} = \bf{R}_{\oldHs}^{-1} \langle \Psi,\bo_\mu \Phi\rangle\bf{q}=  \bf{R}_{\oldHs}^{-1}\bbo_\mu\bf{q}.
\end{equation*}
Hence
\begin{equation*}
  \|\Ps R_{\oldHs}^{-1}\bo_\mu  q\|_{\oldHs}^2 = \langle \bo_\mu q, \Ps R_{\oldHs}^{-1} \bo_\mu  q \rangle_{\oldHs} = \bf{q}^\top \bbo_\mu^\top \bf{R}_{\oldHs}^{-1} \bbo_\mu \bf{q}.
\end{equation*}
which confirms the  claim. \end{proof}

Similarly, by the definition \eqref{Mu2} of the $\hat \MM_\mu$-norm, we have $\|q\|_{\hat \MM_\mu} = \|R_{\oldHs}^{-1}\bo_\mu  q\|_{\oldHs}$ so that
\[
  \MnOne(\mu) = \left\{ q \in \MM_n: \, q = \bf{q}^\top \Phi, \, \bf{q}^\top \bf{R}_{\hat \MM_\mu} \bf{q} = 1 \right\}.
\]
It follows that the optimization problem \eqref{eq:stab-greedy} is equivalent to
\begin{equation}
\label{matrixopt}
(\bar{\mu}, \bar{\bf{q}}) = \argmax_{\mu \in \mathcal{P}; \bf{q} \in \R^n} \frac{\bf{q}^\top \big(\bf{R}_{\hat \MM_\mu} - \bbo_\mu^\top \bf{R}_{\oldHs}^{-1} \bbo_\mu\big)\bf{q}}{\bf{q}^\top \bf{R}_{\hat \MM_\mu} \bf{q}},
\end{equation}
where $\bar{\bf{q}}$ is the coefficient vector of $\bar{q}$. 
This problem can be solved analogously to the corresponding optimization problem \eqref{arginfsup}, \eqref{eq:lpp-problem} of the inf-sup condition so that we obtain the alternative algorithm \textsc{Update-$\delta$} for updating $\HH_n$.
\begin{algorithm}[htb]
  \caption{Update to achieve $\delta$-proximality.}
  \begin{algorithmic}[1]
    \Function{Update-$\delta$}{$\HH_n$, $\MM_n$}
    \State Choose $0 < \delta < 1$.
    \State Select a sufficiently large sample $\mathcal{S} \subset \mathcal{P}$.
    \Repeat
    \State Assemble the Gramians and cross-Gramians $\bf{R}_{\oldHs}$, $\bf{R}_{\hat \MM_\mu}$, $\bbo_\mu$ (see \eqref{matrices}).
    \State Compute
      \begin{align*}
        \delta_{max} & = \max_{\mu \in \mathcal{S}; \bf{q} \in \R^n} \frac{\bf{q}^\top \big(\bf{R}_{\hat \MM_\mu} - \bbo_\mu^\top \bf{R}_{\oldHs}^{-1} \bbo_\mu\big)\bf{q}}{\bf{q}^\top \bf{R}_{\hat \MM_\mu} \bf{q}}, \\
        (\bar{\mu}, \bar{\bf{q}}) & = \argmax_{\mu \in \mathcal{S}; \bf{q} \in \R^n} \frac{\bf{q}^\top \big(\bf{R}_{\hat \MM_\mu} - \bbo_\mu^\top \bf{R}_{\oldHs}^{-1} \bbo_\mu\big)\bf{q}}{\bf{q}^\top \bf{R}_{\hat \MM_\mu} \bf{q}}.
      \end{align*}
      \State Update $\HH_n \leftarrow {\rm span} \{ \HH_n, R_{\HH_{\bar{\mu}}}^{-1}\bo_{\bar{\mu}}  \bar{q}_{\bar{\mu}}\}$ with $\bar{q}_{\bar{\mu}} = \sum_{i=1}^{n} (\bar{\boldsymbol{q}}_{\bar{\mu}})_i \phi_i$.
    \Until{$\delta_{max} \le \delta$} \label{stop2}
    \State \Return{$\HH_n$}
    \EndFunction
  \end{algorithmic}
\end{algorithm}

The efficient practical execution of Algorithm \textsc{Update-$\delta$} requires assembling the matrices $\bf{R}_{\hat \MM_\mu}$
in the typical offline/online fashion. This is possible   when instead of Assumption \ref{assR} the following holds.

\begin{assumption}
\label{assM}
 {The Riesz maps $R_{\HH_\mu}$, $R_{\mnew}$ and hence their inner products  $(\cdot,\cdot)_{\HH_\mu}$, $(\cdot,\cdot)_{\mnew}$ depend affinely on the parameter $\mu\in\mathcal{P}$.}
\end{assumption}

By \eqref{Mu2}, Assumption \ref{assM} is valid if the $\HH_
\mu$-norm can be chosen independent of $\mu$, i.e., when \eqref{allHequiv} holds.  {Moreover Assumption \ref{assM} can also be satisfied for parameter dependent $\HH_\mu$-norms as e.g., in view of  \eqref{specialRiesz}, for  the transport equation.}

Finally, it is important to note that the number of operations used by both algorithms \textsc{Update-Inf-Sup} 
and \textsc{Update-$\delta$} (under Assumption \ref{assM})
only depends on the size of the sample $\mathcal{S}$ and the dimensions $n$ and $m(n)$ of the reduced bases. Especially, it is independent of the dimension of the truth spaces which renders these algorithms feasible. 

Assumption \ref{assM} is clearly more restrictive, i.e. the use of \textsc{Update-$\delta$} is more constrained than 
\textsc{Update-Inf-Sup} which applies under the standard assumptions of affine dependence and for any norm on $\MM_\mu$.

\subsection{Interrelation between both Stabilization Schemes}

We discuss next the interrelation between the schemes \textsc{Update-Inf-Sup} and  \textsc{Update-$\delta$}.

\begin{prop}
  \label{prop:update-equal}
  Assume that we use the $\|\cdot\|_{\mnew}$-norm for $\MM_\mu$ and the spectral decomposition 
  \begin{equation*}
  \bf{R}_{\mnew} = \boldsymbol{L}_{\MM_\mu}^T \boldsymbol{L}_{\MM_\mu} = \bf{Q}_\mu^\top\Lambda_\mu^{1/2}\Lambda_\mu^{1/2}\bf{Q}_\mu.
  \end{equation*}
  in \eqref{Rchol} for the scheme \textsc{Update-inf-sup}, where $\Lambda_\mu$ is the diagonal matrix with the eigenvalues and $\bf{Q}_\mu$ the matrix of corresponding eigenvectors. Then the outputs of $\textsc{Update-inf-sup}~ and~ \textsc{Update-$\delta$}$ coincide.
\end{prop}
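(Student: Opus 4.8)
The plan is to compare the two algorithms one sweep of their \texttt{Repeat}-loop at a time and to prove, by induction on the iteration counter starting from the common input $(\HH_n,\MM_n)$, that at every step they enrich $\HH_n$ by the \emph{same} function and (with the two tolerances matched through $\delta=\sqrt{1-(\zeta\beta_\mathcal{N})^2}$) terminate simultaneously. Since the $\|\cdot\|_{\mnew}$-norm gives $c_M=C_M=1$, Proposition \ref{propstab} already asserts that the two stability conditions are equivalent; what is needed here is the sharper \emph{pointwise} form of that equivalence, so that the two worst-case searches are forced to pick the same pair $(\bar\mu,\bar q)$.

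First I would establish the pointwise Pythagorean identity: using $\|q\|_{\mnew}=\|R_{\oldHs}^{-1}\bo_\mu q\|_{\oldHs}$ from \eqref{Mu2} and splitting through the $\oldHs$-orthogonal projector $\Ps$, one gets for every $0\ne q\in\MM_n$ and every $\mu\in\mathcal{P}$
\[
1=\frac{\|\Ps R_{\oldHs}^{-1}\bo_\mu q\|_{\oldHs}^2}{\|q\|_{\mnew}^2}+\frac{\|(I-\Ps)R_{\oldHs}^{-1}\bo_\mu q\|_{\oldHs}^2}{\|q\|_{\mnew}^2}.
\]
By \eqref{supb} the first summand equals the square of the argument of the inf-sup quotient \eqref{infsupmu} at $(q,\mu)$, while by \eqref{eq:manifold2}--\eqref{eq:stab-greedy} the second summand equals the square of the argument of the $\delta$-proximality quotient. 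In matrix form, combining $\|q\|_{\mnew}^2=\bf{q}^\top\bf{R}_{\mnew}\bf{q}$, Lemma \ref{lemstabcond} and \eqref{supb}, this says precisely that the inf-sup Rayleigh quotient $\bf{q}^\top\bbo_\mu^\top\bf{R}_{\oldHs}^{-1}\bbo_\mu\bf{q}/(\bf{q}^\top\bf{R}_{\mnew}\bf{q})$ and the proximality Rayleigh quotient of \eqref{matrixopt} sum to $1$ identically in $\bf{q}\ne0$ and $\mu$.

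From this identity I would then read off two things. For each fixed $\mu$, the $\bf{q}$ minimizing the inf-sup quotient (which \textsc{Update-inf-sup} computes, via the substitution $\bf{q}\mapsto\boldsymbol{L}_{\MM_\mu}^{-1}\bf{q}$ behind \eqref{infDmu}, as $\boldsymbol{L}_{\MM_\mu}^{-1}$ applied to the smallest right singular vector of $\boldsymbol{D}_\mu$) coincides in direction with the $\bf{q}$ maximizing the proximality quotient, which \textsc{Update-$\delta$} extracts from \eqref{matrixopt}; it is here that the hypothesis that the \emph{spectral} factorization $\bf{R}_{\mnew}=\bf{Q}_\mu^\top\Lambda_\mu^{1/2}\Lambda_\mu^{1/2}\bf{Q}_\mu$ is used in \eqref{Rchol} enters, making the identification of the right singular vectors of $\boldsymbol{D}_\mu$, pulled back through $\boldsymbol{L}_{\MM_\mu}^{-1}$, with the generalized eigenvectors of the pencil $(\bbo_\mu^\top\bf{R}_{\oldHs}^{-1}\bbo_\mu,\ \bf{R}_{\mnew})$ of \eqref{matrixopt} transparent. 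Second, writing $\sigma(\mu)$ for the per-$\mu$ inf-sup value (the smallest singular value of $\boldsymbol{D}_\mu$) and $\delta(\mu)$ for the per-$\mu$ proximality value, the identity yields $\sigma(\mu)^2+\delta(\mu)^2=1$; since $t\mapsto\sqrt{1-t^2}$ is strictly decreasing on $[0,1]$, one gets $\argmin_{\mu\in\mathcal{S}}\sigma(\mu)=\argmax_{\mu\in\mathcal{S}}\delta(\mu)=:\bar\mu$, so both algorithms select the same $\bar\mu$, the same direction $\bar q$, and hence perform the identical enrichment $\HH_n\to\operatorname{span}\{\HH_n,R_{\HH_{\bar\mu}}^{-1}\bo_{\bar\mu}\bar q\}$.

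It remains to match the termination tests, which is routine: after a common enrichment both algorithms recompute the same $\sigma(\bar\mu)$ and $\delta(\bar\mu)=\sqrt{1-\sigma(\bar\mu)^2}$, the loop of \textsc{Update-inf-sup} exits when $\sigma(\bar\mu)\ge\zeta\beta_\mathcal{N}$ (with $\beta_\mathcal{N}$ from \eqref{eq:inf-sup}) and that of \textsc{Update-$\delta$} when $\delta(\bar\mu)\le\delta$, and with $\delta=\sqrt{1-(\zeta\beta_\mathcal{N})^2}$ these are the same test; the induction then gives that the whole sequences of spaces, in particular the returned ones, agree. The step I expect to be the real obstacle, and the reason the spectral factorization is singled out, is exactly the direction-matching above: one must carry the change of variables $\bf{q}\mapsto\boldsymbol{L}_{\MM_\mu}^{-1}\bf{q}$ through both the objective and the \emph{extracted} minimizer and verify that the two algorithms return parallel vectors $\bar q$, not merely equal optimal values $\sigma(\bar\mu)$ and $\delta(\bar\mu)$.
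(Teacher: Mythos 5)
Correct, and essentially the same argument as the paper: both rest on the complementary relation between the inf-sup and the $\delta$-proximality Rayleigh quotients — you derive it as a pointwise Pythagorean identity in $\oldHs$ using $\|q\|_{\mnew}=\|R_{\oldHs}^{-1}\bo_\mu q\|_{\oldHs}$ and the orthogonality of $\Ps$, while the paper writes out the equivalent matrix facts $\bf{D}_\mu^\top\bf{D}_\mu=\Lambda_\mu^{-1/2}\bf{Q}_\mu\bf{M}_\mu\bf{Q}_\mu^\top\Lambda_\mu^{-1/2}$ and $\lambda_{\max}(\mu)=1-\lambda_{\min}(\bf{D}_\mu^\top\bf{D}_\mu)$ with $\bf{M}_\mu=\bbo_\mu^\top\bf{R}_{\oldHs}^{-1}\bbo_\mu$ — and both then read off that the per-$\mu$ optimizers, and hence $\bar\mu$ and $\bar q$, coincide. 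Your explicit matching of the two termination tolerances via the complementary relation is a useful step beyond the paper, whose proof stops at the observation that the per-step enrichments agree.
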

\begin{proof}
Let $\bf{M}_\mu :=   \bbo_\mu^\top \bf{R}_{\oldHs}^{-1} \bbo_\mu$. Clearly, since $\bf{Q}_\mu$ is orthogonal,
\begin{equation*}
\lambda_{\max}(\mu):=\max_{  \bf{q} \in \R^n} \frac{\bf{q}^\top \big(\bf{R}_{\hat \MM_\mu} - \bbo_\mu^\top \bf{R}_{\oldHs}^{-1} \bbo_\mu\big)\bf{q}}{\bf{q}^\top \bf{R}_{\hat \MM_\mu} \bf{q}}
\end{equation*}
is the largest eigenvalue of the matrix $\bf{I} - \Lambda_\mu^{-1/2}\bf{Q}_\mu \bf{M}_\mu\bf{Q}_\mu^\top \Lambda_\mu^{-1/2}$
so that
\begin{equation}
\label{lambdamin}
\lambda_{\max}(\mu) =1- \lambda_{\min}(\Lambda_\mu^{-1/2}\bf{Q}_\mu \bf{M}_\mu\bf{Q}_\mu^\top \Lambda_\mu^{-1/2}).
\end{equation}
On the other hand, using the $\mnew$-norm in \textsc{Update-Inf-Sup}, i.e.,  replacing $\bf{R}_{\MM_\mu}$ by $\bf{R}_{\mnew}$
in \eqref{Rchol}, and using the spectral decomposition $\bf{R}_{\mnew}=\bf{Q}_\mu^\top\Lambda_\mu^{1/2}\Lambda_\mu^{1/2}\bf{Q}_\mu$ for $\bf{L}_{\MM_\mu}^T\bf{L}_{\MM_\mu}$, 
the matrix $\bf{D}_\mu$ in \eqref{infDmu} takes the form
$\bf{D}_\mu = \bf{L}_{H_\mu}^{-\top}\bbo_\mu \bf{Q}_\mu^\top \Lambda^{-1/2}$. Clearly, the smallest singular value of $\bf{D}_\mu$
is just $\lambda_{\min}(\bf{D}_\mu^\top\bf{D}_\mu)^{1/2}$ and the corresponding eigenvector agrees with the right singular vector of
$\bf{D}_\mu$. Since   
\begin{equation*}
\bf{D}_\mu^\top\bf{D}_\mu= \Lambda_\mu^{-1/2}\bf{Q}_\mu \bf{M}_\mu\bf{Q}_\mu^\top \Lambda_\mu^{-1/2}
\end{equation*}
we see that in this case the enrichments procued by both schemes agree, which confirms the claim.
\end{proof}

\subsection{Termination of Stabilizing Greedy Loops}\label{sec:term}
\subsubsection{The general Case}

Under the most general assumptions,  neither insisting on \eqref{allHequiv} nor on \eqref{allMequiv} we 
resort to a very crude argument that ensures termination of the
stabilizations loops
~ \textsc{Update-inf-sup} ~and ~\textsc{Update-$\delta$}.  
Our findings can be summarized as follows.
\begin{prop}
\label{thmterm}
Both schemes \textsc{Update-Inf-Sup} and  \textsc{Update-$\delta$} always terminate after finitely many
steps. 
 \end{prop}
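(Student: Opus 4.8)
The plan is to show that each stabilization loop adds, at every non-terminating step, a test function that reduces a suitable monotone quantity by a definite amount, so that after finitely many steps the stopping criterion must be met; the only subtle point is that the spaces $\HH_\mu$ may vary with $\mu$, so one cannot work with a single greedy sequence as in Remark \ref{rem-greedy}. Since both schemes produce the \emph{same} enrichment under the spectral-decomposition choice (Proposition \ref{prop:update-equal}), and since both search for the worst $(\bar\mu,\bar q)$ in the compact parameter set, it suffices to treat \textsc{Update-$\delta$} and argue via $\delta$-proximality.

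First I would set $m:=m(n)=\dim\HH_n$ and recall that after $k$ enrichment steps $\HH_n$ has grown to dimension $m+k$. At step $k$, if the loop has not terminated, \eqref{matrixopt} (equivalently \eqref{eq:stab-greedy2}) has produced $(\bar\mu,\bar q)$ with
\begin{equation*}
\inf_{\phi\in\HH_n}\|R_{\oldHs}^{-1}\bo_{\bar\mu}\bar q-\phi\|_{\HH_{\bar\mu}}>\delta,\qquad \|R_{\oldHs}^{-1}\bo_{\bar\mu}\bar q\|_{\HH_{\bar\mu}}=1,
\end{equation*}
and we add $\bar v:=R_{\HH_{\bar\mu}}^{-1}\bo_{\bar\mu}\bar q$ to $\HH_n$. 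The key observation is that the newly added vector $\bar v$ has $\|\bar v\|_{\HH_{\bar\mu}}=1$ and $\HH_{\bar\mu}$-distance $>\delta$ from the previous $\HH_n$; hence the \emph{enlarged} space now reproduces $\bar v$ exactly. To turn this into a termination argument I would fix a fixed reference: since $\mathcal{P}$ is compact and the norms $\|\cdot\|_{\MM_\mu}$ are all equivalent to the reference norm $\|\cdot\|_\MM$ (Remark \ref{rem-sol-M}), and $\MM_n\subset\MM_\mathcal{N}$ is fixed and finite-dimensional, the set $S:=\{R_{\HH_\mu}^{-1}\bo_\mu q:\mu\in\mathcal{P},\,q\in\MM_n,\,\|q\|_{\mnew}=1\}$ — viewed inside the fixed finite-dimensional truth space $\HH_\mathcal{N}$ — is the continuous image of the compact set $\mathcal{P}\times\MnOne$ and is therefore compact, hence totally bounded in the $\HH_\mathcal{N}$-reference norm. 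Crucially, all the truth-space norms $\|\cdot\|_{\HH_\mu}$, being norms on the \emph{finite-dimensional} space $\HH_\mathcal{N}$, are mutually equivalent: there are constants $0<c_\mathcal{N}\le C_\mathcal{N}<\infty$ (depending on $\mathcal{N}$, not on $n$) with $c_\mathcal{N}\|\cdot\|_{\HH_\mathcal{N}}\le\|\cdot\|_{\HH_\mu}\le C_\mathcal{N}\|\cdot\|_{\HH_\mathcal{N}}$ for all $\mu$.

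Now I would argue by contradiction: if the loop never terminated, it would produce an infinite sequence $\bar v_1,\bar v_2,\dots\in S$ (in fact in the compact rescaled set, after normalization) such that each $\bar v_{k+1}$ has $\HH_{\bar\mu_{k+1}}$-distance $>\delta$ from $\operatorname{span}\{\HH_n^{(0)},\bar v_1,\dots,\bar v_k\}$, hence in particular $\|\bar v_{k+1}-\bar v_j\|_{\HH_{\bar\mu_{k+1}}}>\delta$ for all $j\le k$ (taking $\phi=\bar v_j$). By the finite-dimensional norm equivalence this gives $\|\bar v_{k+1}-\bar v_j\|_{\HH_\mathcal{N}}>\delta/C_\mathcal{N}$ for all $j<k$, so $(\bar v_k)$ is a $\delta/C_\mathcal{N}$-separated sequence inside a bounded subset of the finite-dimensional space $\HH_\mathcal{N}$, contradicting total boundedness (equivalently, sequential compactness of the closed bounded set). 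Hence the loop terminates after finitely many steps. The same argument applies verbatim to \textsc{Update-Inf-Sup}: its failure to terminate at step $k$ means the worst inf-sup constant is below $\zeta\beta_\mathcal{N}$, which by the equivalent formulation \eqref{eq:infsup-proof-2}–\eqref{stabcond} in Proposition \ref{propstab} forces the corresponding supremizer $\bar v$ to lie at $\HH_{\bar\mu}$-distance $\ge\sqrt{1-\zeta^2}\,\|\bar v\|_{\HH_{\bar\mu}}$ from $\HH_n$, again producing a separated sequence in $\HH_\mathcal{N}$.

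\textbf{Main obstacle.} The one genuinely delicate point is that the separation is measured in the \emph{varying} norms $\|\cdot\|_{\HH_{\bar\mu_k}}$, while compactness/total boundedness is a statement about one fixed topology; the argument goes through precisely because everything ultimately lives in the \emph{fixed finite-dimensional} truth space $\HH_\mathcal{N}$, on which all norms are equivalent with $n$-independent constants — it is this finiteness, not any uniform equivalence on the infinite-dimensional $\HH$, that makes the crude argument work (and this is exactly why Proposition \ref{thmterm} does not claim a bound on the number of steps). I would be careful to note that the constant $C_\mathcal{N}$ and hence the (unquantified) number of stabilization steps may blow up as $\mathcal{N}\to\infty$; bounding it quantitatively is deferred to the refined analysis under \eqref{allHequiv} or \eqref{allMequiv}.
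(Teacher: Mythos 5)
Your proof is correct, but it takes a genuinely different and more elaborate route than the paper. The paper's argument is pure linear algebra: if the new supremizer $\bar v = R_{\HH_{\bar\mu}}^{-1}\bo_{\bar\mu}\bar q$ were already contained in the current test space $\tilde\HH_n$, then since $\bar v$ realizes the truth inf-sup constant for $(\bar q,\bar\mu)$ (Remark \ref{remcond2} together with \eqref{eq:inf-sup}), the discrete inf-sup over $\tilde\HH_n$ for the worst pair would already be $\ge\zeta\beta_\mathcal{N}$, contradicting non-termination. Hence each enrichment strictly increases $\dim\HH_n$, which is bounded by $\dim\HH_\mathcal{N}<\infty$. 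This is shorter and — importantly — sidesteps the one soft spot in your version: your separation argument requires the constants $c_\mathcal{N},C_\mathcal{N}$ in $c_\mathcal{N}\|\cdot\|_{\HH_\mathcal{N}}\le\|\cdot\|_{\HH_\mu}\le C_\mathcal{N}\|\cdot\|_{\HH_\mathcal{N}}$ to be uniform over $\mu\in\mathcal{P}$, which is not an automatic consequence of working in a fixed finite-dimensional space (two norms are always equivalent, but the constants are $\mu$-dependent) and which the paper never explicitly assumes in this generality; you would need some continuity of $\mu\mapsto R_{\HH_\mu}$ and compactness of $\mathcal{P}$ to pin it down. The linear-independence argument needs none of this. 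What your approach does buy is a somewhat more refined lens: the $\delta$-separated-sequence viewpoint foreshadows the metric-entropy bounds of Section \ref{sec:greedy}, whereas the paper's crude dimension count $\le\dim\HH_\mathcal{N}$ makes no attempt at quantification (and indeed both you and the paper note, right after this proposition, that the resulting truth-dependent bound is unsatisfactory). If you keep your route, state the uniform-in-$\mu$ equivalence on $\HH_\mathcal{N}$ as an explicit hypothesis, or supply the one-line compactness-plus-continuity justification.
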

\begin{proof}
 We   prove the assertion only for the scheme \textsc{Update-Inf-Sup}. The argument for \textsc{Update-$\delta$} is identical.
To this end, let $\HH_{n+1}$ and $\MM_{n+1}$ be the spaces obtained by applying \textsc{Update-Inf-Sup} to the input spaces $\HH_n$ and $\MM_n$. According to the update rule \eqref{eq:update} used by \textsc{Update-Inf-Sup}, the enlarged space $\HH_{n+1}$ is contained in the truth space $\HH_\mathcal{N}$. Thus, since $\HH_\mathcal{N}$ is finite dimensional, the statement of the proposition follows if each added function is linearly independent to the previous ones. To this end, assume the algorithm has already grown $\HH_n$ to $\tilde{\HH}_n$ and let $\bar v = R_{\HH_{\bar\mu}}^{-1} \bo_{\bar{\mu}} \bar{q}$ be the next function to be added (see \eqref{supremizer}). Now, assume by contradiction that it is already contained in $\tilde{\HH}_n$. Since $\bar v$ is a supremizer this implies that 
\begin{equation*}
  \sup_{v\in \tilde{\HH}_n}\frac{b_{\bar{\mu}}(\bar{q}, v)}{\|v\|_{\HH_\mu}\|\bar{q}\|_{\hat\MM_\mu}} \ge \zeta \beta_\mathcal{N}.
\end{equation*}
Recalling that $\bar{\mu}$ and $\bar{q}$ are the worst possible choices according to \eqref{arginfsup}, this violates the stopping criterion in Line \ref{stop1} of \textsc{Update-Inf-Sup}. Thus, it follows that $\bar v$ is linearly independent from $\tilde{\HH}_n$ showing finite termination of \textsc{Update-Inf-Sup}.
\end{proof}

The fact that, by the above argument, the number of stabilization steps may depend on the dimension of the truth
space is certainly very pessimistic and not satisfactory from a practical point of view. In fact, much more can be said under some additional assumptions.

\subsection{Uniformly Equivalent Norms }
Suppose now that all the spaces $\HH_\mu$, $\MM_\mu$ agree as sets with $\HH$, $\MM$, respectively (see \eqref{H}), and that 
\eqref{allHequiv}, \eqref{allMequiv})  hold. Then we can replace $\|\cdot\|_{\HH_\mu}$ by an uniformly equivalent reference
norm $\|\cdot\|_{\HH}$. Since the Riesz map $R_{\HH}$ is now independent of $\mu$,
Assumption \ref{assM} holds and the stabilizing schemes 
 \textsc{Update-inf-sup} and \textsc{Update-$\delta$} are equivalent, see Proposition \ref{prop:update-equal}.  
Moreover, recall that, by Remark \eqref{remcond3}, the supremizer for $q_n \in \MM_n$ in the inf-sup condition is given by $R_{\HH}^{-1} \bo_\mu q_n$. The key observation  is that because of the affine decomposition \eqref{affine} of $\bo_\mu$ all these supremizers together 
generate a finite dimensional space.

\begin{rem}
\label{remufinite}
Given $\MM_n\subset \MM$, $\MM_n={\rm span}\,\{\phi_j:j=1,\ldots,n\}$, let
\begin{align}
\label{hatH}
\hat{\HH}_n  & := \left\{ R_{\HH}^{-1}\bo_\mu p: \, p \in \MM_n, \, \mu \in \mathcal{P} \right\} \subseteq {\rm span} \left\{R_{\HH}^{-1} \bo_\mu \phi_j: \,
j=1,\ldots,n, \, \mu \in \mathcal{P} \right\}.
\end{align}
Then $\hat{\HH}_n$ is a finite dimensional space of dimension ${\rm dim}\,\hat \HH_n\leq m_Bn$, where $m_B$ is the number of terms in the affine expansion \eqref{affine}. Hence, 
one has
\begin{equation}
\label{infsup4}
\inf_{\mu\in \mathcal{P}} \inf_{q\in \MM_n}\sup_{v\in \hat \HH_n}\frac{b_\mu(q,v)}{\|q\|_{\MM_\mu} \|v\|_{\HH}} 
= \inf_{\mu\in \mathcal{P}} \inf_{q\in \MM_n}\sup_{v\in  \HH_\mathcal{N}}\frac{b_\mu(q,v)}{\|q\|_{\MM_\mu} \|v\|_{\HH}}\ge \beta_\mathcal{N} ,
\end{equation}
where $\beta_\mathcal{N} >0$ is the inf-sup constant from \eqref{eq:inf-sup} in Remark \ref{rem:truth}.
\end{rem}
\begin{proof} If $\bo_k$ is the operator corresponding to the bilinear form $b_k(\cdot, \cdot)$ in the affine expansion \eqref{affine} and $R_{\HH} = R_{\HH_\mu}$ is independent of $\mu \in \mathcal{P}$, we conclude that  
\begin{equation*}
\hat{\HH}_n  \subseteq {\rm span} \left\{R_{\HH}^{-1} \bo_k \phi_j: \,
j=1,\dots,n, \, k = 1, \dots, m_b \right\}
\end{equation*}
which proves the first part of the claim. Since all optimal test functions are contained in $\hat{\HH}_n$ the discrete inf-sup condition \eqref{infsup4} follows immediately from the assumed inf-sup condition \eqref{eq:inf-sup} of the full problem.
\end{proof}

The following simple observation is an immediate consequence of Remark \ref{remufinite}.

\begin{prop}
\label{propmB}
   Assume that \eqref{allHequiv}, hold. Then the update algorithm \textsc{Update-Inf-Sup}, and hence likewise \textsc{Update-$\delta$}, increases the dimension of the test space in each step and terminates with a test space of dimension at most $n m_B$.
\end{prop}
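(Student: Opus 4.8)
The plan is to invoke Remark~\ref{remufinite} to bound the total number of supremizers that can ever be added. First I would observe that under \eqref{allHequiv} the Riesz map $R_{\HH}$ is parameter independent, so Assumption~\ref{assM} holds and by Proposition~\ref{prop:update-equal} the two schemes produce identical enrichments; hence it suffices to analyze \textsc{Update-Inf-Sup}. At every step the enrichment \eqref{eq:update} adds the function $\bar v = R_{\HH}^{-1}\bo_{\bar\mu}\bar q$ with $\bar q\in \MM_n$ and $\bar\mu\in\mathcal{P}$; since $\MM_n$ is fixed throughout the loop (only $\HH_n$ grows), every such $\bar v$ lies in the space $\hat\HH_n$ of \eqref{hatH}, which by Remark~\ref{remufinite} has dimension at most $n m_B$.

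Next I would argue, exactly as in the proof of Proposition~\ref{thmterm}, that each added supremizer is linearly independent of the previously enriched test space: if $\bar v$ already belonged to the current $\tilde\HH_n$, then since $\bar v$ is the supremizer for $(\bar\mu,\bar q)$ we would have $\sup_{v\in\tilde\HH_n} b_{\bar\mu}(\bar q,v)/(\|v\|_{\HH}\|\bar q\|_{\MM_{\bar\mu}}) = \|\bar v\|_{\HH} \ge \zeta\beta_\mathcal{N}$ (using that $(\bar\mu,\bar q)$ realizes the worst-case inf-sup and the truth-space inf-sup bound \eqref{infsup4}), contradicting the fact that the stopping criterion in Line~\ref{stop1} has not yet been met. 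Therefore each step strictly increases $\dim\HH_n$.

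Combining the two observations: the dimension of the test space increases by exactly one at each stabilization step, yet all added functions are constrained to lie in the fixed finite-dimensional space $\hat\HH_n$ of dimension $\le n m_B$. Hence the loop can run for at most $n m_B - \dim\HH_n^{\mathrm{init}} \le n m_B$ steps and must terminate with $\dim\HH_n \le n m_B$. The statement for \textsc{Update-$\delta$} follows from the equivalence asserted in Proposition~\ref{prop:update-equal}. I do not anticipate a genuine obstacle here; the only point requiring a little care is making sure that the worst-case pair $(\bar\mu,\bar q)$ selected in \eqref{arginfsup} indeed forces the strict inequality against the stopping threshold, which is precisely where the uniform truth-space inf-sup constant $\beta_\mathcal{N}$ from \eqref{eq:inf-sup} enters, and that $\MM_n$ genuinely stays fixed so that Remark~\ref{remufinite} applies verbatim.
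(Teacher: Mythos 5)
Your proposal is correct and follows essentially the same route as the paper: invoke Remark~\ref{remufinite} to confine all added supremizers to the fixed space $\hat\HH_n$ of dimension at most $n m_B$, then reuse the linear-independence argument from the proof of Proposition~\ref{thmterm} to conclude that each enrichment step strictly increases $\dim \HH_n$, hence the loop terminates within $n m_B$ steps. (Minor slip: the supremum over $\tilde\HH_n$ equals $\|\bar v\|_\HH/\|\bar q\|_{\MM_{\bar\mu}}$, not $\|\bar v\|_\HH$, but this does not affect the argument.)
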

\begin{proof} 
The proof is identical to the one of Theorem \ref{thmterm} by noting that all supremizers that are added during the algorithm are not only contained in the truth space $\HH_\mathcal{N}$ but in the much smaller space $\hat{\HH}_n \subset \HH_\mathcal{N}$ which is of dimension $m_B n$.
\end{proof}

The above reasoning applies verbally to other saddle point problems like those appearing in parameter dependent Stokes
systems or constrained optimization problems. The finite dimensionality of $\hat\HH_n$ is also the basis of the {\em a priori}
choice of stabilizers in
 \cite{GV, GV2, Rozza} to guarantee inf-sup stability although the connection with a greedy stabilization
 does not seem to be made there. 
 
 The reason for nevertheless applying  such a greedy stabilization is that a sufficient
 inf-sup stability might actually be achieved at an earlier stage so that in total fewer stabilizers suffice.

\subsection{A Greedy Perspective}
\label{sec:greedy}
As we shall see in later applications, in the context of Section \ref{sec:recipe} it will be important to treat also
the case where only \eqref{allMequiv} holds but \eqref{allHequiv} is {\em not valid}. In this case
the norms $\|\cdot\|_{\MM_\mu}, \|\cdot\|_{\hat\MM_\mu}$ are all equivalent and can be replaced by a parameter independent
reference norm $\|\cdot\|_{\MM}$. For instance, in the case  \eqref{transnorms} one has
$\|\cdot\|_{L_2(\Omega)}= \|\cdot\|_{\hat\MM_\mu} =\|\cdot\|_{\MM}$, $\mu\in \mathcal{P}$, which will be further discussed in later numerical experiments.
For the remainder of this section we assume that only \eqref{allMequiv} is valid.

We have already seen that
 $
 (\t q,\t \mu) := \argmin_{q\in \MM^1_n,  \mu\in\mathcal{P}}\big(\sup_{v\in \HH_n}b_\mu(q,v)/\|v\|_{\HH_\mu}\big)
 $
 agrees with the output of \eqref{eq:stab-greedy} and, on account of Remark \ref{rem-greedy}, of a greedy approximation step
 to the set $\MM^1_n$. Hence, the question of termination of the stabilization loop is equivalent to finding the smallest $j$ for which
 \begin{equation}
 \label{sigmanj}
\sigma_{n,j}:=  \max_{q\in \MM^1_n , \mu\in\mathcal{P}}\big(\inf_{\psi\in \HH_{n}^j}\|q -  R_{ \MM }^{-1}B_\mu^*\psi
\|_{\MM }\big) \leq \delta,
\end{equation}
where $\HH_n^0=\HH_{n-1}$ and $\HH_n^j$ is the   enrichment of $\HH^0_n$ produced by the $j$th stabilization step.
Here we assume that for the preceding pair $(\MM_{n-1},\HH_{n-1})$ 
we have that $\HH_{n-1}= \HH_{n-1}^{\ell_{n-1}}$ satisfies $\sigma_{n-1,\ell_{n-1}}\leq \delta$.
 We wish to see now how $\HH_n^{ {j}}$ evolves from $\HH_{n-1}$. For convenience let $K_\mu := R_{ \MM }^{-1}B_\mu^*$

A straightforward application of the currently available greedy concepts from \cite{BCDDPW,DPW}
is complicated by the fact that the sets $\MM^1_n$ become   ``less compact'' when $n$ grows
and that  the approximating subspaces  {$R_{\MM}^{-1} B_\mu^* Y_n$} depend on $\mu$ through the
application of $B_\mu^*$.  The following discussion is merely to shed some
light on the expected behavior of $\sigma_{n,j}$, in particular, to identify some driving mechanisms,
  while we postpone a more detailed discussion to
forthcoming work.

Our first remarks concern the continuity of the mapping $\mu \mapsto K_\mu$.
To this end, recall that the space $\HH=\bigcap_{\mu\in\mathcal{P}}\HH_\mu$ is endowed with
the norm $\|\cdot\|_\HH$ from \eqref{supnorms} which is here allowed to be stronger than 
the individual norms  $\|\cdot\|_{\HH_\mu}$. 
In view of \eqref{allMequiv} and \eqref{eq:M-equiv}, we have for any $\psi\in \HH$ 
\begin{equation*}
\| K_\mu\psi \|_\MM \leq  C_0 C_M \|K_\mu\psi \|_{\hat\MM_\mu} = C_0 C_M\|\psi\|_{\HH_\mu}\leq C_0 C_M\|\psi\|_{\HH}.
\end{equation*}
 Thus, $K_\mu\in L(\HH,\MM)$ which is equivalent to saying $B_\mu^*\in L(\HH,\MM')$. Now let $B_k^*$ be the
 component of $B_\mu^*$ corresponding to the $k$th bilinear form $b_k(\cdot,\cdot)$ in \eqref{affine} which, by assumption, are smooth.
Obviously, one has  
\begin{equation}
\label{Kcont2}
\|(K_\mu - K_{\mu'})\psi\|_\MM \leq \sum_{k=1}^{m_B} |\Theta_k^b(\mu)-\Theta_k^b(\mu')|\|B_k^*\psi\|_{\MM'} 
\leq C \max_{k=1,\ldots,m_B} |\Theta_k^b(\mu)-\Theta_k^b(\mu')|\|\psi\|_\HH.
\end{equation}
which shows that the mapping $\mathcal{P} \to L(\HH,\MM), \mu\mapsto K_\mu$ is continuous in $\mu$.
By compactness of $\mathcal{P}$, we can find for each $\e >0$ a finite $\e$-net comprised  {of} $N_\e(\mathcal{P})$ centers $\mu_{\e,j}$ such that
for each $\psi\in\HH$ and any $\mu\in \mathcal{P}$ there exists a $j\in \{1,\ldots,N_\e(\mathcal{P})\}$ such that 
\begin{equation}
\label{mu-net}
\|(K_\mu - K_{{\mu_{\e,j}}})\psi\|_\MM \leq \e \|\psi\|_\HH.
\end{equation}
 In order to estimate $\|\psi\|_Y$ we introduce the  constant
\begin{equation}
\label{phiK}
 {C(n,\mathcal{N}):= \max_{\mu\in \mathcal{P}; \; q \in \MnOne}\|K_{\mu}^{-1}q\|_\HH,}
\end{equation}
 which is finite because $K_{\bar\mu}^{-1}\phi_r\in \HH_\truth \subset \HH$. However, a point of concern is
that the quantity
{$C(n, \mathcal{N})$} may depend on the truth space dimension, a point that will be taken up later again.
In particular, we have for any given $n$ and  any $\psi = K_{\bar\mu}^{-1}\phi_r$, $r\leq n$, where $\phi_r$ is any of the orthonormalized reduced basis
functions generated by snapshots from $\mathcal{M}_\MM$,
\begin{equation}
\label{mu-net2}
\|(K_{\bar\mu} - K_{ {\mu_{\e,j}}}) K_{\bar\mu}^{-1}\phi_r\|_\MM \leq \e \|K_{\bar\mu}^{-1}\phi_r\|_\HH\leq \e C(n,\mathcal{N}), \quad \bar\mu\in \mathcal{P}.
\end{equation}
For  $n=1$ the greedy stabilization would determine a sequence $\bar\mu_{1,l}$, $l=1,\ldots,\ell_1$, such that
\begin{equation}
\label{phi1}
\sup_{\mu\in\mathcal{P}}\Big\|\phi_1 - \sum_{l=1}^{\ell_1}c_l(\mu)K_\mu\big(K_{\bar\mu_{1,l}}^{-1}\phi_1)\big)\Big\|_\MM \leq \delta.
\end{equation}
Thus, for $\e \leq \delta/2C(n,\mathcal{N})$, this means that $\ell_1 \leq N_\e(\mathcal{P})$. In fact, as long as \eqref{phi1}
does not hold no two $\bar\mu_{1,l}$ can fall into a single ball of the $\e$-cover of $\mathcal{P}$ and as soon as
every ball contains a $\bar\mu_{1,l}$ \eqref{mu-net2} says that \eqref{phi1} is valid.

It is now easy to display for any given $n$ a space $V_n\subset \HH$ which is $\delta$-proximal for $\MM_n$.
In fact, pick an $\e_n$-net for $\mathcal{P}$ where $\e_n := \delta/(C(n,\mathcal{N})\sqrt{n})$ and let 
\begin{equation*}
V_n := {\rm span}\,\{K_{\mu_{\e_n,l}}^{-1}\phi_k : l=1,\ldots, N_{\e_n}(\mathcal{P}),\, k=1,\ldots,n\}.
\end{equation*}
Hence, for any $\mu\in \mathcal{P}$ there exists a center $\mu_{\e_n,k}$ such that, on account of \eqref{mu-net2}, for every $r=1,\ldots,n$,
\begin{equation*}
\|\phi_r - K_\mu(K_{\mu_{\e_n,k}}^{-1}\phi_r)\|_\MM = \|(K_{\mu_{\e_n,k}}-K_\mu)(K_{\mu_{\e_n,k}}^{-1}\phi_r)\|_\MM \leq \delta/n^{1/2}.
\end{equation*}
Since the $\phi_k$ are $\MM$-orthonormal we obtain for any $q\in \MM^1_n$
\begin{eqnarray*}
\inf_{\psi\in V_n}\|q - K_\mu \psi\|_\MM &\leq & \sum_{r=1}^n |(q,\phi_r)_\MM| \|\phi_r - K_\mu (K_{\mu_{\e_n,k}}^{-1}\phi_r)\|_\MM
\leq \delta \Big(\sum_{k=1}^n |(q,\phi_r)_\MM|^2\Big)^{1/2} \leq \delta.
\end{eqnarray*}
Of course, it is not clear whether the   greedy procedure for building the spaces $\HH_n^j$ would actually produce
a space of similar dimension ${\rm dim}\, V_n \leq n N_{\e_n}(\mathcal{P})=: N_n$. If one did instead a  separate greedy procedure
for each subspace spanned by $\phi_r$ the argument for \eqref{phi1} would say that it terminates after at most $N_n$ steps.
Intuitively, one expects that the actual greedy algorithm terminates earlier since each individual $\phi_r$ has to be
resolved only with accuracy $\delta$, not with accuracy $\e_n = \delta/(n^{1/2}C(n,\mathcal{N}))$ as above.

The perhaps simplest, although grossly pessimistic,  way of rigorously bounding the number of greedy steps providing the spaces $\HH_n^j$ in the stabilization loop,
is to use the above pidgeon hole principle and consider in addition to a $\bar\delta$-net for $\mathcal{P}$ with centers $\mu_{\bar\delta,l}$,
$l=1,\ldots,N_{\bar\delta }(\mathcal{P})$, 
also a $\bar\e$-net for the compact set $\MM^1_n$ with centers $q_i$, $i=1, \dots, N_{\bar\epsilon}(\MnOne)$, where 
$\bar\delta, \bar\e$ will be specified later.
Now suppose that $K_{\bar\mu}^{-1}\bar q$ is the new snapshot added to $\HH_n^{j-1}$ to generate $\HH_n^j$. There exist,
 by construction,  indices 
$l \in \{1, \dots, N_{\bar\delta}(\mathcal{P}) \}$ and $i \in \{ 1, \dots, N_{\bar\epsilon }(\MnOne) \}$ such that $\|\bar{q} - q_i\|_X \le \bar\epsilon $ and $\|(K_{\bar\mu} - K_{\mu_{\bar\delta,l}}) \psi\|_X \le \bar\delta \|\psi\|_Y$.

Then,
for any $(q,\mu)\in \MM_n^1 \times \mathcal{P}$ with $\|q_i-q\|_\MM\leq \bar\e$ and $\|(K_\mu - K_{\mu_{\bar\delta,l}})\psi\|_\MM \leq \bar\delta \|\psi\|_\HH$
where $q_i, \mu_{\bar\delta,l}$ are related to the new snapshot $K_{\bar\mu}^{-1} \bar q$ as above, we obtain
\begin{eqnarray}
\label{second}
  \inf_{\psi \in Y_n^{j}} \| q- K_\mu\psi\|_\MM &\leq &
\|q - K_\mu K_{\bar\mu}^{-1}\bar q\|_\MM  \leq \|q- q_i\|_\MM +\|q_i-\bar q\|_\MM +\|(K_{\bar\mu}-K_\mu)K_{\bar\mu}^{-1}\bar q\|_\MM\nonumber\\
& \le & 2(\bar\e  + \bar\delta C(n,\mathcal{N})).
 \end{eqnarray}
Now  {choose $\bar\e$ and $\bar\delta$ such that $2(\bar\e  + \bar\delta C(n,\mathcal{N}))=\delta$, e.g. by taking} $\bar\e = \delta/4$, $\bar\delta =\delta/(4C(n,\mathcal{N}))$.
It follows from \eqref{second} that a new snapshot $K_{\bar\mu}^{-1}\bar q$ can only satisfy $\inf_{\psi\in \HH_n^{j-1}}\|\bar q - K_{\bar\mu}\psi\|_\MM >\delta$
if it falls into a cover element $B_{q_i,\mu_{\bar\delta,l}}(\bar\e,\bar\delta):= \{(q,\mu): \|q_i-q\|_\MM\leq \bar\e,\, \|(K_\mu - K_{\mu_{\bar\delta,l}})\psi\|_\MM \leq \bar\delta \|\psi\|_\HH\}$ that does not contain any previous snapshot yet. This can happen at most {$N_{\bar\delta }(\mathcal{P})N_{\bar\e}(\MnOne)$}
times which therefore bounds the number of possible greedy steps in the stabilization loop.

As mentioned before, this bound is very pessimistic. In fact,  since $\MnOne$ is isometrically isomorphic to a unit sphere in $\ell_2^n$ 
the covering numbers $N_{\bar\e}(\MM_n^1)$ increase like $(12/\bar\e)^n$, {see \cite[Chapter 13]{LGM}}. 
The numbers $N_{\bar\delta}(\mathcal{P})$ instead depend only on the fixed dimension of the parameter set $\mathcal{P}$ and the smoothness
of the parameter functions $\Theta_k^b(\mu)$.

One way to ameliorate the strong dependence of the $N_{\bar\e}(\MM_n^1)$ on $n$ is to
 relate the problem to a greedy approximation to
 a compact set that is {\em independent} of $n$.  
 To this end, recall the solution set $\mathcal{M} = \mathcal{M}_{\MM}\times \{0\}$, see \eqref{sol-M},
 which under the present assumptions is compact in $\MM \times \{0\}$, independent of the truth spaces.
 As detailed later the spaces $\MM_n$ are generated by a (weak) greedy algorithm.
By compactness, the (weak) greedy errors
 \begin{equation}
 \label{distM}
\sigma_n:=\sigma_{n}( \mathcal{M}_{\MM},\MM_n):=   \dist\,(\mathcal{M}_{\MM},\MM_n)_{\MM} \to 0,\quad n\to \infty,
 \end{equation}
 tend to zero at a rate that is independent of the truth dimension.
  A repeated greedy approximation generates an $\MM$-orthonormal system $\{\phi_j\}_{j=1}^\infty\subset \MM$.
 Let $\MM^\circ$ denote the closure of the span of $\{\phi_j\}_{j=1}^\infty$, i.e.
 \begin{equation*}
\MM^\circ := \bigg\{q\in \MM : \sum_{j\in \N}(q,\phi_j)_{\MM}^2 <\infty \bigg\}.
 \end{equation*}
Let
\begin{align*}
\mathcal{F} & := \left\{q\in \MM^\circ : 
 \, |q|_* <\infty\right\}, & |q|_* & := \sup_{n\in\N}\sigma_n^{-1}\Bigg(\sum_{j=n+1}^\infty (q,\phi_j)_{\MM}^2\Bigg)^{1/2}.
\end{align*}
Obviously, $\mathcal{B} := \{q\in \mathcal{F} : \max\,\{\|q\|_{\MM}, |q|_*\}\leq 1\}$ is a compact subset of $\MM$ and 
by construction
\begin{equation}
\label{distq}
{\rm dist}\,(q,\MM_n)_{\MM} \leq \sigma_{n}|q|_*, \quad q\in \mathcal{F},\, n\in \N.
\end{equation}
Moreover, the greedy errors for $\mathcal{B}$ are comparable to the greedy errors for $\mathcal{M}_{\MM}$.
In particular,
\begin{equation}
\label{Kol1}
\dist\,(\mathcal{B},\MM_n)_\MM  {\leq\dist\,(\mathcal{M}_{\MM},\MM_n)_{\MM}} 
 ,\quad n\in \N.
\end{equation}
Furthermore, 
\begin{equation*}
\MM^1_n:= \{ q\in \MM_n: \| q\|_{\hat\MM_\mu}\leq 1,\, \mu\in\mathcal{P}\}\subseteq \{q\in \MM_n : \|q\|_{\MM}\leq c_M^{-1}\}  \subset \mathcal{F},
\end{equation*} 
since for $q\in \MM_n$
\begin{equation}
\label{Mnsigma}
|q|_* = \max_{j\leq n}\sigma_{j}^{-1}\Big(\sum_{k=j+1}^n(q,\phi_k)_{\MM}^2\Big)^{1/2} \leq \sigma_{n}^{-1}\|q\|_{\MM}
\leq c_M^{-1}\sigma_n^{-1}.
\end{equation}
Therefore, recalling \eqref{sigmanj}, we conclude that
\begin{eqnarray}
\label{sigmanj2}
\sigma_{n,j} &\leq &
 \max_{q\in \MM_n^1\cap \mathcal{B} , \mu\in\mathcal{P}}\big(\inf_{\psi\in \HH^j_{n}}\|q -  R_{\hat\MM_\mu}^{-1}B_\mu^*\psi
\|_{\hat\MM_\mu}\big)(c_M\sigma_n)^{-1}\nonumber\\
&\leq &  \max_{q\in \MM_n^1\cap \mathcal{B} , \mu\in\mathcal{P}}\big(\inf_{\psi\in \HH_{n}^j}\|q -  R_{\MM}^{-1}B_\mu^*\psi
\|_{\MM}\big) C_M(c_M\sigma_n)^{-1},
\end{eqnarray}
where $c_M, C_M$ are the constants from \eqref{Mu2}.
Hence, termination of the stabilization loop reduces to analyzing the the necessary number of
steps needed to enrich $\HH_{n-1}^{\ell_{n-1}}=\HH_n^0$ until
$\max_{q\in \MM_n^1\cap \mathcal{B} , \mu\in\mathcal{P}}\big(\inf_{\psi\in \HH_{n}^j}\|q -  R_{\MM}^{-1}B_\mu^*\psi
\|_{\MM}\big)\leq c_M\sigma_n/C_M$. Clearly, $N_\e( \MM_n^1\cap \mathcal{B})\leq N_\e(\mathcal{B})$ where
$\mathcal{B}$ is now a fixed compact set. We can now apply the same reasoning as above with 
$\MM_n^1$, $\bar\e, \bar\delta$ replaced by $\mathcal{B}$, $\bar\e c_M\sigma_n/C_M, \bar\delta c_M\sigma_n/C_M$,
respectively. This leads to the alternative bound $N_{\delta\sigma_n c_M/(4 C_M)}(\mathcal{B}) N_{\delta \sigma_n c_M/(C_M C(n,\mathcal{N}))}(\mathcal{P})$
for the maximal number of greedy steps. Note that in this case $C(n,\truth)$ can be replaced by
\begin{equation*}
C(\mathcal{B},\truth):= \max_{\mu\in \mathcal{P}, q\in \mathcal{B}}\|K_\mu^{-1}q\|_\HH.
\end{equation*}
Since every $q\in \MM_n^1$ or $q\in \mathcal{B}$ is a   linear combination of   snapshots $B_{\mu_l}^{-1}f = {p}(\mu_l)$
and since the enrichments of the test spaces $\HH_n^j$ are of the form $B_{\t\mu_j}^{-*}R_\MM q_j$, $q_j\in \MM_n^1$, they are
linear combinations of elements of the form $B^{-*}_{\mu'}R_\MM B_{\mu''}^{-1}f$.
Since the operators $B^{-*}_{\mu'}R_\MM B_{\mu''}^{-1}$ at least preserve the regularity of $f$ the
quantities $K_\mu^{-1}q$, $q\in \MM_n^1$ ($q\in \mathcal{B}$), where 
now the inversion is understood  in the infinite dimensional spaces,  possess the required additional regularity in $\HH$ 
when $f$ is sufficiently regular, see the discussion of the transport problem in Section \ref{sect:wcs}.

\begin{summary}
\label{greed-summ}
We can now summarize the above findings as follows:
\begin{enumerate}
\item
If the constants $C(n,\truth)$ or $C(\mathcal{B},\truth)$ are uniformly bounded independently of the choice of the truth spaces the stabilization loops terminate after a number of steps that is independent of the truth spaces. Their dependence on $n$ can be bounded in terms of the metric entropy of $\MM_n^1$ or  
the metric entropies of $\mathcal{B}$ and $\mathcal{P}$, coupled in the latter case with the greedy errors $\sigma_n$ for $\mathcal{M}_\MM$.
\item
The constants $C(n,\truth)$,  $C(\mathcal{B},\truth)$ remain independent of the truth spaces when $f$ is sufficiently regular.
\end{enumerate}
\end{summary}

\section{A Double Greedy Scheme}\label{sec:DG}
We shall discuss now a greedy strategy for constructing reduced spaces $\MM_n, \HH_n$ for the
saddle point problem \eqref{system-full} which is a weak formulation of \eqref{eq:op-eq}.

The basic outline of such a strategy looks as follows:
\begin{itemize}
\item
  {\em Stabilization:} Given a pair $\HH_n, \MM_n$, enrich $\HH_n$ until $\beta_{\HH_n, \MM_n}(\mu)\geq \zeta 
  \beta_{\mathcal{N}}$, $\mu\in\mathcal{P}$, where $\beta_\mathcal{N}$ is given by \eqref{eq:inf-sup} and $\beta_{\HH_n, \MM_n}(\mu)$, $\mu\in\mathcal{P}$, is the inf-sup constant \eqref{eq:unsym-infsup-VW} for the reduced spaces $\MM_n$ and $\HH_n$.
\item
{\em Approximation update:} In view of the best approximation property \eqref{BAP1}, \eqref{BAP2}, we then
 improve the accuracy of the reduced spaces with the aid of a greedy step.
\end{itemize}
That last greedy step, in turn, requires a tight  residual based surrogate as detailed next.

\subsection{Tight Surrogates} 
Suppose now that the pair of spaces $\MM_n\subset\MM$, $\HH_n\subset \HH$ satisfy  the $\delta$-proximality condition \eqref{delta-prox} for some $\delta \in (0,1)$
and abbreviate the corresponding solutions of \eqref{system-disc} as $p_n(\mu) := p_{\MM_n,\HH_n}(\mu)$, $u_n(\mu) := u_{\HH_n,\MM_n}(\mu)$.
By Propositions  \ref{prop:stab2},  \ref{propstab}, 
the definition \eqref{Mu2} of the $\hat\MM_\mu$-norm says then that
\begin{equation*}
  \|p(\mu) - p_{n}(\mu)\|_{\hat\MM_\mu} = \|f - \bo_\mu  p_n(\mu)\|_{\oldHs'}, \quad \mu\in \mathcal{P},
\end{equation*}
i.e., the residual based surrogate
\begin{equation}
\label{R*}
R(\mu, \MM_n \times \HH_n) := \|f - \bo_\mu  p_{n}(\mu)\|_{\oldHs'} 
\end{equation}
is in this case almost ideal. In fact, combining \eqref{R*} with \eqref{BAP1} yields
 \begin{equation}
\label{tightsurr1}
\inf_{q\in W}\|p(\mu) -q\|_{\hat\MM_\mu} \leq R(\mu, \MM_n \times \HH_n) \leq  \frac{1}{1-\delta}\inf_{q\in \MM_n}\|p(\mu) -q\|_{\hat\MM_\mu}.
\end{equation}
Hence, \eqref{surr2} holds with $c_R=1-\delta, C_R= 1$.

\subsubsection{Reduction to Truth-Riesz Maps}

Of course, the dual norm $\|\cdot\|_{\HH_\mu'}$ and hence $R(\mu, \MM_n \times \HH_n)$ cannot be computed exactly.
Instead, defining 
\begin{equation}
\label{truthres}
\|\cdot\|_{\HH_\truth'} := \|P_{\HH_\mu,\HH_\truth} R_{\HH_\mu}^{-1} \cdot \|_{\HH_\mu} =  \sup_{v \in \HH_\truth'} \frac{\langle \cdot, v \rangle}{\|v\|_{\HH_\mu}}, 
\end{equation}
we  consider
the following candidate
\begin{equation}
\label{tightsurr2}
R_n(\mu):= \|f - B_\mu p_{n}(\mu)\|_{\HH_\mathcal{N}'},
\end{equation}
where we continue to assume that  $\HH_\mathcal{N} \in \mathcal{V}(\MM_\mathcal{N}, \delta_\mathcal{N})$, i.e., the truth spaces $\MM_\mathcal{N}, \HH_\mathcal{N}$ comply with Remark \ref{rem:XYchoice}.
 Then, by \eqref{res-truth-2} and \eqref{X-bound}, we conclude that
\begin{eqnarray}
\label{tol}
\|p(\mu)- p_n(\mu)\|_{\hat\MM_\mu} &\leq & (1-\delta_\mathcal{N}^2)^{-1/2}\|P_{\HH_\mu,\HH_\mathcal{N}}(R_{\HH_\mu}^{-1}(f- B_\mu p_n(\mu))\|_{\HH_\mu}
=  (1-\delta_\mathcal{N}^2)^{-1/2} R_n(\mu)\nonumber\\
& \leq & (1-\delta_\mathcal{N}^2)^{-1/2} \| f- B_\mu p_n(\mu)\|_{\HH_\mu'} \leq (1-\delta_\mathcal{N}^2)^{-1/2}(1-\delta)^{-1} \| p(\mu)-P_{\hat\MM_\mu,\MM_n}p(\mu)\|_{\hat\MM_\mu}.
\end{eqnarray}
 This immediately implies the following fact.
\begin{prop}
\label{rem:tight}
Under the above assumptions on the truth spaces the surrogate $R_n(\mu)$ given by \eqref{tightsurr2} is tight
with condition
\begin{equation}
\label{surr-cond}
\kappa(R_n) \leq \frac{1}{(1-\delta_\mathcal{N}^2)^{1/2}(1-\delta)},
\end{equation}
which, in principle, can be driven as close to one as one wishes, at a computational expense
caused by a correspondingly large truth space $\HH_\mathcal{N}$ and a possibly larger number of stabilization steps.
\end{prop}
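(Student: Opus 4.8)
The plan is to read off the two tightness constants of Definition \ref{def:tightsurr} directly from the chain of inequalities \eqref{tol}, which was assembled immediately above the statement out of Propositions \ref{prop:stab2} and \ref{propstab}, Remark \ref{WMX} (in particular \eqref{res-est-W}), and the definition \eqref{truthres} of $\|\cdot\|_{\HH_\mathcal{N}'}$. Concretely, I will verify both inequalities of \eqref{surr2} with $X_n=\MM_n$, $\|\cdot\|_X=\|\cdot\|_{\hat\MM_\mu}$, and $R=R_n$, exhibiting $c_R=1-\delta$ and $C_R=(1-\delta_\mathcal{N}^2)^{-1/2}$, and then invoke \eqref{condR}.

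First I would establish the lower bound $c_R R_n(\mu)\leq\|p(\mu)-P_{\hat\MM_\mu,\MM_n}p(\mu)\|_{\hat\MM_\mu}$. Since $\MM_n\subset\MM_\mathcal{N}$ and, by the standing assumption of Remark \ref{rem:XYchoice}, $\HH_\mathcal{N}\in\mathcal{V}(\MM_\mathcal{N},\delta_\mathcal{N})$, the estimate \eqref{res-est-W} applies with $p=p_n(\mu)$ and gives, in the notation \eqref{truthres}, $R_n(\mu)=\|f-B_\mu p_n(\mu)\|_{\HH_\mathcal{N}'}\leq\|f-B_\mu p_n(\mu)\|_{\HH_\mu'}$. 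The right-hand side equals $\|p(\mu)-p_n(\mu)\|_{\hat\MM_\mu}$ by \eqref{Mu2} together with $B_\mu p(\mu)=f$, and the $\delta$-proximality hypothesis on $(\MM_n,\HH_n)$ bounds it, via \eqref{BAP1} (equivalently the right inequality of \eqref{tightsurr1}), by $(1-\delta)^{-1}\|p(\mu)-P_{\hat\MM_\mu,\MM_n}p(\mu)\|_{\hat\MM_\mu}$. Rearranging gives $c_R=1-\delta$, which is exactly the second line of \eqref{tol}.

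Next I would establish the upper bound $\|p(\mu)-P_{\hat\MM_\mu,\MM_n}p(\mu)\|_{\hat\MM_\mu}\leq C_R R_n(\mu)$. Since $P_{\hat\MM_\mu,\MM_n}p(\mu)$ is the $\hat\MM_\mu$-best approximation of $p(\mu)$ in $\MM_n$ and $p_n(\mu)\in\MM_n$, it suffices to bound $\|p(\mu)-p_n(\mu)\|_{\hat\MM_\mu}$; the first line of \eqref{tol}, which rests on the left inequality of \eqref{res-est-W}, yields $\|p(\mu)-p_n(\mu)\|_{\hat\MM_\mu}\leq(1-\delta_\mathcal{N}^2)^{-1/2}R_n(\mu)$, so $C_R=(1-\delta_\mathcal{N}^2)^{-1/2}$. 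Combining the two bounds, \eqref{condR} gives $\kappa(R_n)\leq C_R/c_R=\big((1-\delta_\mathcal{N}^2)^{1/2}(1-\delta)\big)^{-1}$, with $c_R,C_R$ independent of $\mu\in\mathcal{P}$ and of $n$. The concluding remark — that $\kappa(R_n)$ can be forced arbitrarily close to one — then follows since $\delta_\mathcal{N}$ shrinks as $\HH_\mathcal{N}$ is enlarged within $\mathcal{V}(\MM_\mathcal{N},\delta_\mathcal{N})$ and $\delta$ shrinks as the stabilization loops of Section \ref{sec:stab} are run longer.

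I do not expect a genuine obstacle here, since essentially all the substance is already contained in \eqref{tol}. The one point that needs a moment's care is the legitimacy of applying \eqref{res-est-W} to the \emph{discrete} solution component $p_n(\mu)$: this is exactly what the blanket hypothesis $\HH_\mathcal{N}\in\mathcal{V}(\MM_\mathcal{N},\delta_\mathcal{N})$ secures, because then \eqref{WVdelta} holds on $\mathcal{M}_\MM+\MM_\mathcal{N}\supseteq\MM_n\ni p_n(\mu)$; likewise one must keep in force the $\delta$-proximality of $(\MM_n,\HH_n)$ so that \eqref{BAP1} and \eqref{tightsurr1} are available.
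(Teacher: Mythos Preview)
Your proposal is correct and follows essentially the same approach as the paper: the paper simply states that the proposition ``immediately'' follows from the chain \eqref{tol}, and you have carefully unpacked that chain to extract $c_R=1-\delta$ and $C_R=(1-\delta_\mathcal{N}^2)^{-1/2}$ exactly as intended. Your additional remark on why \eqref{res-est-W} applies to $p_n(\mu)$ via the standing assumption $\HH_\mathcal{N}\in\mathcal{V}(\MM_\mathcal{N},\delta_\mathcal{N})$ is a useful clarification that the paper leaves implicit.
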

The equivalence 
\begin{equation}
  \|f - \bo_\mu q\|_{\HH_\truth'} \sim \|f - \bo_\mu q\|_{\HH_\mu'},\quad q\in \MM_\mathcal{N},
  \label{eq:disc-res}
\end{equation}
which is nothing but a reformulation of \eqref{res-est-W} for  $W=\MM_\mathcal{N}, \HH_\mathcal{N} \in \mathcal{V}(\MM_\mathcal{N},\delta_\mathcal{N})$,
says that the $\|\cdot\|_{\HH_\truth'}$-norm  yields still a meaningful error estimate even in case the truth spaces are not rich enough to resolve all features of the infinite dimensional exact solution which will be seen below in the experiments.

The above findings can be summarized as follows.
\begin{prop}
\label{prop:feasible}
If both, \eqref{allHequiv} holds, then $R_n(\mu)$ defined by \eqref{tightsurr2}
is feasible.
\end{prop}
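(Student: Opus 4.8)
The plan is to check the definition of \emph{feasibility} from Section~\ref{sect2.1}: every online evaluation $\mu\mapsto R_n(\mu)$ must decompose into operations whose cost is controlled by $n$, $m(n)$ and the number $m_B$ of affine terms in \eqref{affine}, with no step scaling with $\dim\MM_\mathcal{N}$ or $\dim\HH_\mathcal{N}$. The expression \eqref{tightsurr2} has two ingredients: the reduced saddle-point solve that yields $p_n(\mu)=p_{\MM_n,\HH_n}(\mu)$, and the evaluation of the discrete dual norm $\|f-B_\mu p_n(\mu)\|_{\HH_\mathcal{N}'}$; I would treat them in turn.

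For the first ingredient, write the reduced system \eqref{system-disc} (with $W=\MM_n$, $V=\HH_n$) in the bases $\Phi_n,\Psi_n$. Its matrix has the $\HH_\mu$-Gramian $(\Psi_n,\Psi_n)_{\HH_\mu}$ in the $(1,1)$ block and the cross-Gramian $b_\mu(\Phi_n,\Psi_n)$ in the off-diagonal blocks, with right-hand side $\langle f,\Psi_n\rangle$. By \eqref{allHequiv} the $\HH_\mu$-inner product may be replaced by the fixed reference inner product $(\cdot,\cdot)_{\HH}$, so $(\Psi_n,\Psi_n)_{\HH}$ is assembled once offline; by \eqref{affine}, $b_\mu(\Phi_n,\Psi_n)=\sum_{k=1}^{m_B}\Theta^b_k(\mu)\,b_k(\Phi_n,\Psi_n)$ is an online-cheap combination of the offline matrices $b_k(\Phi_n,\Psi_n)$; and the load $\langle f,\Psi_n\rangle$ is $\mu$-independent (or, in situations such as \eqref{btransport}, again an affine combination of precomputed vectors). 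Hence the $(n+m(n))\times(n+m(n))$ system is assembled and solved online, returning the coefficients of $p_n(\mu)=\sum_{j=1}^n(p_n(\mu))_j\phi_j$.

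For the second ingredient I would use \eqref{truthres} together with \eqref{res-truth-2}: $R_n(\mu)=\|r_n(\mu)\|_{\HH_\mu}$, where $r_n(\mu)\in\HH_\mathcal{N}$ solves $\langle R_{\HH_\mu}r_n(\mu),z\rangle=\langle f-B_\mu p_n(\mu),z\rangle$ for all $z\in\HH_\mathcal{N}$. By \eqref{affine} one has $f-B_\mu p_n(\mu)=f-\sum_{k=1}^{m_B}\sum_{j=1}^n\Theta^b_k(\mu)(p_n(\mu))_j\,B_k\phi_j$, and by \eqref{allHequiv} the truth-level Riesz inverse $R_{\HH_\mathcal{N}}^{-1}$ (Riesz map of $\HH_\mathcal{N}$ for the $\HH$-inner product) is $\mu$-independent, whence
\[
r_n(\mu)=\rho_0-\sum_{k=1}^{m_B}\sum_{j=1}^n\Theta^b_k(\mu)(p_n(\mu))_j\,\rho_{k,j},\qquad \rho_0:=R_{\HH_\mathcal{N}}^{-1}f,\quad \rho_{k,j}:=R_{\HH_\mathcal{N}}^{-1}B_k\phi_j,
\]
all of the vectors $\rho_0,\rho_{k,j}$ being computed once offline. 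Consequently $R_n(\mu)^2$ is the quadratic form in the $(1+m_Bn)$-dimensional vector $\big(1,\ \Theta^b_k(\mu)(p_n(\mu))_j\big)$ whose matrix is the $\HH$-Gram matrix of $\{\rho_0\}\cup\{\rho_{k,j}\}$, again assembled offline; its online evaluation costs $O((m_Bn)^2)$.

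Putting the two steps together, every evaluation of $R_n(\mu)$ only contracts offline-precomputed, truth-sized data against objects of size $n$, $m(n)$, $m_B$, so no online operation scales with the truth dimension, which is precisely feasibility in the sense of Section~\ref{sect2.1}. I do not expect a genuine obstacle here beyond bookkeeping; the one point to keep in view is the dual norm $\|\cdot\|_{\HH_\mathcal{N}'}$: although the representer $r_n(\mu)$ itself lives in the (large) truth space, its evaluation splits affinely so that online only the precomputed Gram matrix of the $\rho$'s is touched, and it is exactly \eqref{allHequiv} — making $R_{\HH_\mu}$, hence $R_{\HH_\mathcal{N}}^{-1}$, independent of $\mu$ — that legitimizes precomputing the $\rho_{k,j}$ offline.
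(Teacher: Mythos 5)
Your argument is correct and takes the same route as the paper: use \eqref{allHequiv} to replace $\|\cdot\|_{\HH_\mu}$ by the $\mu$-independent reference norm so that the Riesz map $R_{\HH}$ (and hence $R_{\HH_\mathcal{N}}^{-1}$) is parameter independent, then exploit the affine decomposition \eqref{affine}. The paper simply cites the ``standard offline/online decomposition'' (referring to \cite{RHP}) where you spell it out explicitly; the content is the same.
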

\begin{proof} Under the given assumptions the norms $\|\cdot\|_{\HH_\mu}$ can be replaced by a uniformly equivalent
 equivalent reference norm $\|\cdot\|_{\HH}$ so that the Riesz map $R_{\HH}$ is independent of $\mu\in\mathcal{P}$.
 Hence, $R_n(\mu)$ can, in view of \eqref{res-truth-2},  be  
   efficiently evaluated by a standard offline/online decomposition, see e.g. \cite{RHP}. 
 \end{proof}
 
\subsubsection{Iterative Tightening}\label{sec:it-tight}
Recall that in the pure transport problem \eqref{allHequiv} does not hold, see Remark \ref{remYdiffer}.
Hence, the surrogate $R_n(\mu)$ from \eqref{tightsurr2} is no longer feasible in the strict sense.
 Instead a feasible variant would be
\begin{equation}
\label{tightsurr3}
R'_n(\mu):= \| f -B_\mu p_n(\mu)\|_{\HH_n'} = \|P_{\HH_\mu,\HH_n}R_{\HH_\mu}^{-1}( f -B_\mu p_n(\mu))\|_{\HH_\mu},
\end{equation}
 where the dual norm is now induced by the {\em reduced} space $\HH_n$ instead of the truth space $\HH_\mathcal{N}$.
While the $\delta$-proximality of $\HH_n$ for $\MM_n$ (see \eqref{delta-prox}) does ensure the equivalence
$\|B_\mu q\|_{\HH_\mu'}\sim \|B_\mu q\|_{\HH_n'}$, $q\in \MM_n$, (with constants close to one, depending on $\delta$)
the analog is not clear for  $\| f -B_\mu q\|_{\HH_n'}$ since generally $f\notin B_\mu(\MM_n)$.

However, Remark \ref{WMX} immediately tells us at least a criterion for the validity of the desired residual equivalence,
namely with the aid of a somewhat strengthened $\delta$-proximality.
\begin{rem}
\label{rem:MnX}
Assume that for some $\bar\delta \in (0,1)$ one chooses $\HH_n \in \mathcal{V}(\MM_n,\bar\delta)$ so that
\begin{equation}
\inf_{v \in \HH_n} \| p - R_{\hat\MM_\mu}^{-1}B^*_\mu v \|_{\MM_\mu}\leq \bar\delta \|p\|_{\MM_\mu},\quad \forall\,\, p\in \mathcal{M}_X + \MM_n.
\label{eq:estimator-stab-3}
\end{equation}
Then
\begin{equation}
\label{res-est-Mn}
(1-\bar\delta^2)^{1/2} \| f- B_\mu q\|_{\HH_\mu'} \leq \|  f- B_\mu q\|_{\HH_n'} \leq \| f- B_\mu q\|_{\HH_\mu'}, 
\quad q\in \MM_n,\, \mu\in\mathcal{P}.
\end{equation}
and we have $\kappa(R_n')\leq (1-\bar\delta^2)^{-1/2}(1-\delta)^{-1}$.
\end{rem}

Note that we could replace $ \mathcal{M}_X $ in \eqref{eq:estimator-stab-3} by its truth approximation $\mathcal{M}_{\MM,\mathcal{N}}$ since, in
view of \eqref{eq:disc-res}, it suffices to establish $\|  f- B_\mu q\|_{\HH_n'} \sim \|  f- B_\mu q\|_{\HH_\mathcal{N}'}$.
But the main practical issue remains how to find $\HH_n$ satisfying \eqref{eq:estimator-stab-3} at affordable cost. 

To this end,   we propose a systematic   way of successively  substantiating tightness of error estimators at the expense 
of  an additional computational  effort   in the offline phase. We refer to this process as 
  {\em iterative tightening}. The idea is that once a reduced space provides sufficiently accurate approximations to $\mathcal{M}_\MM$,
  condition \eqref{eq:estimator-stab-3} becomes easier to fulfill.
To make use of this observation, assume we have a second pair of reduced spaces $\bar{\MM} \subset \MM_\mathcal{N}$ and $\bar{\HH} \subset \HH_\mathcal{N}$. We now describe how such spaces can give rise to tight surrogates and later discuss their construction. 

\begin{lemma}
\label{lem:tight}
Assume that the pair $\MM_n + \bar{\MM}$ and $\bar{\HH}$   satisfies the (standard) $\delta$-proximality condition \eqref{delta-prox} and that the approximation of $p(\mu)$ from $\MM_n + \bar{\MM}$ is superior to the approximation from $\MM_n$ alone, i.e., one has for some $0 \le \xi < 1$
\begin{equation}
  \|p(\mu) - \bar{p}^n(\mu)\|_{\MM_\mu} \le \xi \|p(\mu) - p^n(\mu)\|_{\MM_\mu},\quad \mu\in \mathcal{P},
  \label{eq:rel-error}
\end{equation} 
where $p^n(\mu)$ and $\bar{p}^n(\mu)$ are the respective best approximations to $p(\mu)$ from  $\MM_n$ and $\MM_n + \bar{\MM}$. Then
\begin{equation}
\label{bardelta}
  \inf_{\bar{v} \in \bar{\HH}} \| p - R_{\MM_\mu}^{-1}B^*_\mu \bar{v} \|_{\MM_\mu}\leq \bar\delta \|p\|_{\MM_\mu},\quad \forall\,\, p\in \mathcal{M}_X + \MM_n,
  \, \mu\in \mathcal{P},
\end{equation}
where $\bar\delta := (1 + \delta) \xi + \delta$. Hence, for $\xi,\delta$ sufficiently small,
 the surrogate $R_n'(\mu)$ from \eqref{tightsurr3} is tight with a condition given by \eqref{surr-cond} with $\delta_\mathcal{N}$ replaced by $\bar\delta$.
\end{lemma}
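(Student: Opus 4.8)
The plan is to derive the enhanced $\bar\delta$-proximality \eqref{bardelta} directly from the (standard) $\delta$-proximality of $\bar\HH$ for the \emph{enlarged} trial space $\MM_n+\bar\MM$, by splitting each $p\in\mathcal{M}_X+\MM_n$ into a component lying in $\MM_n+\bar\MM$ and a remainder which, by \eqref{eq:rel-error}, is small relative to $\|p\|$.

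First I would recast the two statements in the equivalent form underlying Remark \ref{WMX}. Arguing exactly as for \eqref{thesame}, the hypothesis that $\MM_n+\bar\MM$ and $\bar\HH$ satisfy \eqref{delta-prox} is equivalent to: for every $q\in\MM_n+\bar\MM$ and every $\mu\in\mathcal{P}$ there is some $\bar v\in\bar\HH$ with $\|q-R_{\hat\MM_\mu}^{-1}B_\mu^*\bar v\|_{\hat\MM_\mu}\le\delta\|q\|_{\hat\MM_\mu}$; and \eqref{bardelta} is precisely the same statement with $\MM_n+\bar\MM$ and $\delta$ replaced by $\mathcal{M}_X+\MM_n$ and $\bar\delta$. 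Since $\bar\HH\subset\HH_\mathcal{N}\subset\HH=\bigcap_{\mu\in\mathcal{P}}\HH_\mu$, all quantities involved are defined for every $\mu$. Hence it suffices, given $p\in\mathcal{M}_X+\MM_n$ and $\mu\in\mathcal{P}$, to exhibit one $\bar v\in\bar\HH$ attaining relative accuracy $\bar\delta$.

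Next I would carry out the splitting. Write $p=p(\mu')+q_0$ with $\mu'\in\mathcal{P}$ and $q_0\in\MM_n$, and decompose $p(\mu')=\bar p^n(\mu')+e$ with $e:=p(\mu')-\bar p^n(\mu')$; then $p=w+e$ where $w:=\bar p^n(\mu')+q_0\in\MM_n+\bar\MM$. Choosing $\bar v\in\bar\HH$ optimal for $w$ in the sense above and using the triangle inequality together with $\|w\|_{\hat\MM_\mu}\le\|p\|_{\hat\MM_\mu}+\|e\|_{\hat\MM_\mu}$,
\begin{equation*}
\|p-R_{\hat\MM_\mu}^{-1}B_\mu^*\bar v\|_{\hat\MM_\mu}\le\delta\|w\|_{\hat\MM_\mu}+\|e\|_{\hat\MM_\mu}\le\delta\|p\|_{\hat\MM_\mu}+(1+\delta)\|e\|_{\hat\MM_\mu}.
\end{equation*}
It then remains to bound $\|e\|$: applying \eqref{eq:rel-error} at the parameter $\mu'$, and noting that $-q_0\in\MM_n$ so that the best $\MM_n$-approximation error of $p(\mu')$ is at most $\|p(\mu')+q_0\|=\|p\|$, one obtains $\|e\|\le\xi\|p\|$, whence the right-hand side above is $\le(\delta+(1+\delta)\xi)\|p\|=\bar\delta\|p\|$; this is \eqref{bardelta}. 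For the concluding assertion, if $\xi,\delta$ are small enough that $\bar\delta<1$, then \eqref{bardelta} is exactly hypothesis \eqref{eq:estimator-stab-3} of Remark \ref{rem:MnX} (with $\HH_n$ there taken to be $\bar\HH$), so the residual equivalence \eqref{res-est-Mn} holds with constant $(1-\bar\delta^2)^{1/2}$; combining it with the $\delta$-proximality bound \eqref{BAP1}--\eqref{tightsurr1} then gives that $R_n'(\mu)$ from \eqref{tightsurr3} is tight with $\kappa(R_n')\le(1-\bar\delta^2)^{-1/2}(1-\delta)^{-1}$, i.e.\ \eqref{surr-cond} with $\delta_\mathcal{N}$ replaced by $\bar\delta$.

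I expect the only genuinely delicate point to be the norm bookkeeping: \eqref{eq:rel-error} and \eqref{bardelta} are phrased in the possibly $\mu$-dependent norms $\|\cdot\|_{\MM_\mu}$, whereas the residual reformulation naturally lives in $\|\cdot\|_{\hat\MM_\mu}$, and the snapshot parameter $\mu'$ differs in general from the target parameter $\mu$. In the regime where iterative tightening is actually used --- in particular for the transport problem \eqref{transnorms}, where $\|\cdot\|_{\MM_\mu}=\|\cdot\|_{\hat\MM_\mu}=\|\cdot\|_{L_2}$ is independent of $\mu$ --- all of these norms coincide and the constant comes out exactly as $\bar\delta=(1+\delta)\xi+\delta$; in the general case one would additionally carry the equivalence constants from \eqref{eq:M-equiv} and \eqref{allMequiv}, at the price of a slightly larger but still explicit $\bar\delta$.
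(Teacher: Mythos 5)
Your proof is correct and follows essentially the same route as the paper's: you split an element $p\in\mathcal{M}_X+\MM_n$ around the improved best approximation $\bar p^n$ into a part lying in $\MM_n+\bar\MM$ (to which the assumed $\delta$-proximality of $\bar\HH$ applies) plus a small remainder controlled by \eqref{eq:rel-error}, then apply the triangle inequality to obtain $\bar\delta=(1+\delta)\xi+\delta$ and invoke Remark \ref{rem:MnX}; the paper does the same with the decomposition written as $p(\mu)-p=(p(\mu)-\bar p^n(\mu))+(\bar p^n(\mu)-p)$. The parameter/norm mismatch you flag at the end is a genuine subtlety the paper's argument glosses over (it only treats elements $p(\mu)-p$ with snapshot parameter equal to the norm parameter), but as you note, for the downstream application via Remark \ref{rem:MnX} one only evaluates residuals $f-B_\mu q=B_\mu(p(\mu)-q)$ and hence only the diagonal case is needed, while in settings where \eqref{allMequiv} holds and the $\hat\MM_\mu$- and $\MM_\mu$-norms are uniformly equivalent (e.g.\ the transport problem with $\|\cdot\|_{\hat\MM_\mu}=\|\cdot\|_{L_2}$) the full statement \eqref{bardelta} follows with at most an extra equivalence constant in $\bar\delta$.
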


 Note that we use the space $\MM_n + \bar{\MM}$ as opposed to $\bar{\MM}$ alone because for the latter space the condition \eqref{eq:rel-error} would imply that $\MM_n \subset \bar{\MM}$ if both spaces are constructed from snapshots, which would be too restrictive for the application below.

\begin{proof}
Then, for each deviation $p(\mu) - p$, $p \in \MM_n$, we obtain
\begin{eqnarray}
  \inf_{\bar{v} \in \bar{\HH}} \| p(\mu) - p - R_{\MM_\mu}^{-1}B^*_\mu \bar{v} \|_{\MM_\mu} & \leq &
  \|p(\mu) - \bar{p}^n(\mu)\|_{\MM_\mu} 
 + \inf_{\bar{v} \in \bar{\HH}} \| \bar{p}^n(\mu) - p - R_{\MM_\mu}^{-1} B^*_\mu \bar{v} \|_{\MM_\mu}\nonumber \\
  & \leq & \|p(\mu) - \bar{p}^n(\mu)\|_{\MM_\mu} + \delta \|\bar{p}^n(\mu) - p\|_{\MM_\mu} 
 \leq  \big( (1+\delta) \xi + \delta \big) \|p(\mu) - p\|_{\MM_\mu}
\label{eq:estimator-stab-2}
\end{eqnarray}
Thus for $ (1 + \delta) \xi + \delta$ sufficiently small the extended $\delta$-proximality condition \eqref{eq:estimator-stab-3} is satisfied for the trial space $\MM_n$ and test space $\bar{\HH}$. Thus, Remark \ref{rem:MnX} applies  which says that the surrogate $R_n'(\mu)$ from \eqref{tightsurr3} is tight with
the claimed condition.\end{proof}

We shall describe ways of constricting the spaces $\bar{\MM}$ and $\bar{\HH}$ later in Section \ref{sect:appl}.

\subsection{Approximation Update}\label{secAU}
Either scheme \textsc{Update-Inf-Sup} or \textsc{Update-$\delta$} outputs a pair $\MM_n, \HH_n$ that is uniformly inf-sup stable, i.e.,
the corresponding inf-sup constant is uniformly bounded away from zero $\beta_{\HH_n,\MM_n}(\mu)\geq \zeta \beta_\mathcal{N}$,
$\mu\in \mathcal{P}$. 
By Proposition \ref{rem:tight}, the surrogate $R_n(\mu)$, defined by \eqref{tightsurr2}, is tight with a condition 
controlled by the $\delta$-proximality parameters.
 The feasibility of this surrogate depends 
on the way how the spaces $\HH_\mu$ depend on the parameter $\mu\in\mathcal{P}$, see Proposition \ref{prop:feasible}.
In applications, an infeasible surrogate is replaced by $R'_n(\mu)$ from \eqref{tightsurr3} combined with
iterative tightening.

This suggest the following {\em outer} greedy step \textsc{update-approximation}, defined in Algorithm \ref{eq:system-alg}, which  aims at improving on the accuracy of
the reduced model. 

\begin{algorithm}[htb]
  \caption{Update to improve the approximation quality.}
  \label{alg:update-approx}
  \begin{algorithmic}[1]
    \Require{Finite dimensional spaces $\HH_n \subset \HH$, $\MM_n \subset \MM$ that satisfy the inf-sup condition 
      \begin{equation}
        \inf_{\mu\in \mathcal{P}} \inf_{q\in \MM_n}\sup_{v\in \bar \HH_n}\frac{b_\mu(q, v)}{\|q\|_{\MM_\mu}}\geq \zeta\beta_\mathcal{N}
        \label{eq:inf-sup-alg}
      \end{equation}
      for some $0 < \zeta \leq 1$.}
    \Function{Update-approximation}{$\HH_n$, $\MM_n$}
    \State Select a sufficiently large sample $\mathcal{S} \subset \mathcal{P}$.
    \State compute 
      \begin{equation*}
        \hat \mu := \argmax_{\mu\in\mathcal{P}}\, \sur
      \end{equation*}
    \State Compute the solution $[\hat u,\hat p]\in \HH_{\hat \mu}\times \MM_{\hat\mu}$ of  
    \begin{equation}
    \label{eq:system-alg}
    \begin{array}{lcc}
    (u,v)_{\HH_{\hat{\mu}}} + b_{\hat{\mu}}(p, v) & = & \langle f,v\rangle,\quad v\in \HH_{\hat{\mu}},\\
    b_{\hat{\mu}}(q, u) &= & \langle g,q\rangle, \quad q\in \MM_{\hat{\mu}}.
    \end{array}
    \end{equation}
    \State \label{state:update} Set
      \[
      {\rm span}\, \{\MM_n,\hat p\} \to \MM_n 
      \]
      \State If \eqref{allMequiv} holds orthonormalize $\{\phi_1,\ldots,\phi_{n-1}, \hat p_n\}$ in $\MM$.
      \State \Return{$\HH_{n}$, $\MM_{n}$}
    \EndFunction
  \end{algorithmic}
\end{algorithm}

\subsection{Putting Things together}\label{sectogether}
The overall double-greedy method (see Algorithm \textsc{DG-1} below) for computing reduced spaces for \eqref{varprob} consists now in combining
the inner greedy stabilization loop with the outer greedy approximation step for the saddle point formulation \eqref{system-full}.

\begin{algorithm}[htb]
  \caption{Double greedy scheme}
  \label{alg:double-greedy}
  \begin{algorithmic}[1]
    \Function{DG-1}{}
  \State Initialize $\HH_1 = \{0\}$ and $\MM_1 = {\rm span}\,\{u(\mu_1)\}$ for an arbitrary $\mu_1 \in \mathcal{P}$.
  \State $\HH_1 \leftarrow \text{\textsc{Update-inf-sup}}(\HH_1, \MM_1)$.
  \While{$\max_{\mu\in\mathcal{P}}\, R_n(\mu) 
   > \tau$}
    \State $
    \MM_n \leftarrow \text{\textsc{Update-Approximation}}(\HH_n, \MM_n)$.
    \State $\HH_n \leftarrow \text{\textsc{Update-inf-sup}}(\HH_n, \MM_n)$.
  \EndWhile
  \State \Return{$\HH_n$, $\MM_n$}
  \EndFunction
  \end{algorithmic}
\end{algorithm}

To  analyze of algorithm \textsc{DG-1} recall the solution manifold $\mathcal{M} = \mathcal{M}_{\MM}\times \{0\}$ from
\eqref{sol-M}.
Since the inner stabilization loops ensure, by Proposition \ref{prop:stab2}
and \eqref{tightsurr1},  tightness of the surrogates, we can invoke Theorem \ref{thrates}.
The above findings can now be summarized as follows.
\begin{theorem}
\label{thm-M}
 Assume that   \eqref{allMequiv}  holds.
Let $p_n(\mu):= p_{\MM_n,\HH_n}(\mu)$, $u_n(\mu):= u_{\HH_n,\MM_n}(\mu)$ denote the solution components of 
\eqref{system-disc}
for $W=\MM_n$, $V=\HH_n$, 
were $[\HH_n,\MM_n]$ are the reduced spaces produced by algorithm \textsc{DG-1} using the surrogate \eqref{tightsurr2}.
Let
\begin{equation*}
\sigma_n(\mathcal{M}_{\MM}):= \sup_{\mu\in \mathcal{P}} 
\| p(\mu)- p_n(\mu)\|_{\hat \MM_\mu},
\quad d_n(\mathcal{M}_{\MM}):=  \inf_{{\rm dim}(Z_n)=n}\Big(  \dist(\mathcal{M}_{\MM},Z_n)_{\MM }\Big).
\end{equation*}
(a)   Then,
if $d_n(\mathcal{M}_{\MM}) =O(n^{-\alpha})$, for some
$\alpha >0$ or if $d_n(\mathcal{M}_{\MM}) =O(e^{-cn^\alpha})$, for some
$c, \alpha >0$, one has 
\begin{equation}
\label{rates2}
\sigma_n(\mathcal{M}_{\MM})= O(n^{-\alpha}),\quad \sigma_n(\mathcal{M})= O(e^{-\t cn^{ {\alpha} }}),\quad n\to \infty,
\end{equation}
respectively, with constants depending on the   parameters  $\delta_\mathcal{N}, \delta, \zeta$ in \textsc{Update-$\delta$} or
\textsc{Update-Inf-Sup}, and on the constants in 
\eqref{allMequiv}, 
\eqref{eq:M-equiv}. Moreover,   \eqref{rates2} remains valid for $\sigma_n(\mathcal{M}_{\MM})$ replaced by
\begin{equation}
\label{greedyerror-2}
\hat \sigma_{n}(\mathcal{M}_{\MM}) := \sup_{\mu\in\mathcal{P}} \big\{\|p(\mu)- p_n(\mu)\|_{\MM}+ \|u(\mu)- u_n(\mu)\|_{\HH_\mu}\big\}.
\end{equation}
(b)
Assume that  both \eqref{allHequiv} and \eqref{allMequiv} hold. 
Then, the assertion (a) holds where in addition ${\rm dim}\,(\HH_n\times \MM_n)\leq (1+m_B)n$, $n\in \N$. 
All bounds remain valid up to the tolerance ${\rm tol}^*$ when all computations are carried
out within this accuracy. Moreover, the surrogate \eqref{tightsurr2} in algorithm \textsc{DG-1} is feasible.
 \end{theorem}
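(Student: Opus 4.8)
The plan is to reduce the theorem to the abstract weak‑greedy convergence theory, i.e.\ to Theorem~\ref{thrates} together with Remark~\ref{remweakgreedy}, by showing that the outer loop of \textsc{DG-1} is a weak greedy algorithm for the compact set $\mathcal{M}_{\MM}$ measured in the reference norm $\|\cdot\|_{\MM}$ supplied by \eqref{allMequiv}, and that its weak greedy constant does not degrade with $n$. The two structural ingredients that make this work are Proposition~\ref{rem:tight} (tightness of $\sur$ with condition \eqref{surr-cond}) and Proposition~\ref{prop:stab2} (the best‑approximation properties \eqref{BAP1}, \eqref{BAP2}), glued together by the norm equivalences \eqref{eq:M-equiv}, \eqref{allMequiv}.

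First I would record the combinatorial skeleton of \textsc{DG-1}. By Proposition~\ref{thmterm} (and, under \eqref{allHequiv}, by Proposition~\ref{propmB}) every call of \textsc{Update-inf-sup} or \textsc{Update-$\delta$} terminates after finitely many steps with a pair $(\HH_n,\MM_n)$ for which $\HH_n$ is $\delta$-proximal for $\MM_n$, where $\delta<1$ is fixed a priori (directly in \textsc{Update-$\delta$}, or via the target $\zeta\beta_\mathcal{N}$ and Proposition~\ref{propstab} in \textsc{Update-inf-sup}) and in particular is \emph{independent of $n$}. Since the snapshot produced by \textsc{Update-Approximation} is appended to $\MM_n$ only after the inner argmax, at every point where the while‑condition of \textsc{DG-1} or the maximisation in \textsc{Update-Approximation} evaluates $\sur$ the current pair is $\delta$-proximal, so Proposition~\ref{rem:tight} applies and $\sur$ obeys \eqref{surr2} relative to $\|\cdot\|_{\hat\MM_\mu}$ with the condition \eqref{surr-cond}. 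Passing through $\|\cdot\|_{\hat\MM_\mu}\sim\|\cdot\|_{\MM_\mu}\sim\|\cdot\|_{\MM}$ (by \eqref{eq:M-equiv}, \eqref{allMequiv}), together with $\inf_{q\in\MM_n}\|p(\mu)-q\|\le\|p(\mu)-p_n(\mu)\|$ and \eqref{BAP1}, one upgrades this to: $\sur$ is a tight surrogate for $\dist\,(p(\mu),\MM_n)_{\MM}=\|p(\mu)-P_{\MM,\MM_n}p(\mu)\|_{\MM}$ with a condition $\kappa$ bounded by the right‑hand side of \eqref{surr-cond} times the four fixed equivalence constants, hence uniformly in $n$. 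As $\hat\mu=\argmax_{\mu}\sur$, the new snapshot $p_{n+1}=p(\hat\mu)$ satisfies the weak greedy condition \eqref{gamma} with $\gamma=\kappa^{-1}$ (Remark~\ref{remweakgreedy}), and Theorem~\ref{thrates} then yields $\dist\,(\mathcal{M}_{\MM},\MM_n)_{\MM}=O(n^{-\alpha})$, resp.\ $O(e^{-\t c n^{\alpha}})$, under the two hypotheses on $d_n(\mathcal{M}_{\MM})$. To convert this into the stated estimates, I would use \eqref{BAP1} once more, $\|p(\mu)-p_n(\mu)\|_{\hat\MM_\mu}\le(1-\delta)^{-1}\inf_{q\in\MM_n}\|p(\mu)-q\|_{\hat\MM_\mu}\lsim\dist\,(p(\mu),\MM_n)_{\MM}$ uniformly in $\mu$, so $\sigma_n(\mathcal{M}_{\MM})\lsim\dist\,(\mathcal{M}_{\MM},\MM_n)_{\MM}$ inherits the rate; \eqref{BAP2} supplies the auxiliary component $u(\mu)-u_n(\mu)$ at the cost of the remaining equivalence constants, giving the same bound for $\hat\sigma_n(\mathcal{M}_{\MM})$. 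That all bounds persist up to ${\rm tol}^*$ when the snapshots are only computed as truth solutions of \eqref{eq:system-alg} is inherited verbatim from the last assertion of Theorem~\ref{thrates}. This establishes part (a).

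For part (b) I would additionally invoke Remark~\ref{remufinite}/Proposition~\ref{propmB}: under \eqref{allHequiv} all optimal test functions for $\MM_n$ lie in the finite‑dimensional space $\hat\HH_n$ of dimension at most $m_B n$, so each \textsc{Update-inf-sup} (equivalently \textsc{Update-$\delta$}) call terminates with $\dim\HH_n\le m_B n$; since $\dim\MM_n=n$ this gives $\dim(\HH_n\times\MM_n)\le(1+m_B)n$ for all $n$. Feasibility of $\sur$ under \eqref{allHequiv} is precisely Proposition~\ref{prop:feasible}, and the ${\rm tol}^*$ statement is as in (a).

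I expect the principal obstacle to be making rigorous the claim that the weak greedy constant $\kappa$ genuinely stays bounded as $n\to\infty$: this hinges on the $\delta$ returned by the stabilization loop being controlled only by the a priori parameters ($\zeta$, or the target $\delta$) and the fixed constants $c_M,C_M,c_\circ,C_\circ$. It is immediate and clean when $\MM_\mu$ is given the $\|\cdot\|_{\hat\MM_\mu}$‑norm (then $c_M=C_M=1$, $\beta_\mathcal{N}=\sqrt{1-\delta_\mathcal{N}^2}$, and Proposition~\ref{propstab} returns $\delta=\sqrt{1-\zeta^2(1-\delta_\mathcal{N}^2)}<1$), while for the general‑norm variant one needs the benign condition that the relaxed target be realisable, i.e.\ $C_M\zeta\beta_\mathcal{N}\le1$. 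A second, more structural point to get right is the reconciliation of the genuinely parameter‑dependent energy norms $\|\cdot\|_{\hat\MM_\mu}$ with the single Hilbert reference norm demanded by the weak‑greedy machinery of \cite{BCDDPW,DPW}; this is exactly where \eqref{allMequiv} is indispensable, which is why the rate statement is not claimed without it.
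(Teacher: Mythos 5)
Your proposal is correct and follows essentially the same route as the paper's (much terser) proof: tightness of the surrogate after each stabilization pass via Proposition~\ref{rem:tight}, transfer to the reference $\MM$-norm via \eqref{eq:M-equiv} and \eqref{allMequiv}, invocation of the weak-greedy machinery (Remark~\ref{remweakgreedy}, Theorem~\ref{thrates}), and \eqref{BAP1}--\eqref{BAP2} for the stated error quantities, with Proposition~\ref{propmB} and Proposition~\ref{prop:feasible} supplying part~(b). The technical caveats you raise at the end (the $n$-independence of $\delta$, and the realisability constraint on $\zeta$ under a general $\MM_\mu$-norm) are genuine points the paper glosses over, but they do not represent a gap — they are resolved exactly as you indicate.
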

\begin{proof} The output $[\HH_n,\MM_n]$ of step 6 in \textsc{DG-1} is uniformly inf-sup stable so that the surrogate
\eqref{tightsurr2} used in step 5 is uniformly tight, with a condition depending on the stabilization thresholds $\delta, \delta_\mathcal{N}$. 
Concerning $\hat \sigma_{n}(\mathcal{M}_{\MM})$ we use \eqref{BAP2}. By 
\eqref{allMequiv} 
the surrogates remain uniformly tight for the reference norm $\|\cdot\|_{\MM}$. 
Hence, Theorem \ref{thrates} applies. The rest of the assertion follows
from Proposition \ref{propmB} and Proposition \ref{prop:feasible}.\end{proof}

In general, under the assumption (a), the well conditioned surrogate \eqref{tightsurr2} is not feasible. Employing the feasible surrogate 
\eqref{tightsurr3} instead, requires, in order to gurarantee rate optimality, an additional iterative tightening as described in Section \ref{sec:it-tight} and later in connection with numerical experiments.
Note also that under the  assumption (a)   ${\rm dim}\,\HH_n$   could be significantly larger than $n$, see the discussion in Section \ref{sec:greedy}.   In the case of uniformly equivalent norms, i.e., when both conditions 
\eqref{allHequiv} and \eqref{allMequiv} hold, the  dimension of the stabilizing spaces $\HH_n$ remains proportional to the dimension
of the reduced primal space.

We conclude this section with a remark on the online evaluation of $p_n(\mu)$. Recall that the corresponding
component $u_n(\mu)\in \HH_n$ is only an auxiliary variable tending to zero.
\begin{rem}
\label{remaltcomp} 
Assume that \eqref{allHequiv} and \eqref{allMequiv} hold, i.e. the spaces $\MM_\mu$ and $\HH_\mu$ can be choose parameter independent. Instead of solving for a given $\mu$ the 
saddle point problem \eqref{system-disc} for $W=\MM_n$, $V=\HH_n$, whose dimension is $n+m(n)$,
 one can compute in the offline phase the {\em test basis functions} $\psi_{k,j}$, $j=1,\ldots n$, $k=1,\ldots,m_B$
 \begin{equation}
 \label{opttestbasis}
 (\psi_{k,j},v)_{\HH}= b_k(\phi_j,v),\quad v\in \HH_n,\,\, j=1,\ldots, n,
 \end{equation}
 where $b_k$ are the components of the affine decomposition \eqref{affine}. Then, defining
\begin{equation}
\label{psinmu}
\psi^n_j(\mu):= \sum_{k=1}^{m_B}\Theta^b_k(\mu)\psi_{k,j},
\end{equation}
on account of Proposition \ref{propPG}, for each $\mu \in \mathcal{P}$, the solution $p_n(\mu) = p_{\MM_n,\HH_n}(\mu)$ of the saddle point problem \eqref{system-disc}, also solves 
the Petrov-Galerkin problem
\begin{equation}
\label{PGn}
b_\mu(p_n(\mu),\psi^n_j(\mu))= \langle f_\mathcal{N},\psi^n_j(\mu)\rangle,\quad j=1,\ldots, n.
\end{equation}
Hence the online complexity is indeed determined by the size $n$ of the trial basis.
\end{rem}
 
\section{Application to the Model Problems}\label{sect:appl}
\subsection{Singularly Perturbed Convection-Diffu\-sion Problems}
We refer to the setting in Section \ref{ssect:con-diff} and consider  {the convection-diffusion} problem \eqref{eq:cd}
for large Peclet numbers.

To this end, we shall briefly discuss two scenarios concerning the truth spaces, namely (a) boundary layers are to be
resolved completely by the truth spaces, and (b) due to a possibly very small diffusion, even the truth spaces 
are not required to resolve boundary layers.

In case (a) solutions in  the truth spaces could be obtained by simple standard Galerkin discretizations and
a modified variational formulation according to \eqref{Mu2} is only needed for the computation
of reduced basis functions which then also resolve  boundary layers well. 

In this example we prescribe  
  the space $\oldHs$ and adjust $\MM_\mu$ according to \eqref{Mu2}. We first decompose $\bo_\mu$ into its symmetric and skew-symmetric parts:
\begin{equation*}
s_\mu(u,v)  := \frac{1}{2} \dualp{\bo_\mu  u, v} + \dualp{\bo_\mu  v, u},\quad
k_\mu(u,v)  := \frac{1}{2} \dualp{\bo_\mu  u, v} - \dualp{\bo_\mu  v, u},
\end{equation*}
and define 
\begin{equation}
\label{norms}
\|v\|_{\oldHs}^2 := s_\mu(v, v) =   \epsilon |v|_{H^1(\Omega)}^2 + \left\|\left(c - \frac{1}{2} \vdiv b(\mu)\right)^{1/2} v \right\|_{L_2(\Omega)}^2,
\end{equation}
see \cite{Verfurth05,Sangalli05,cdw} for details. $\|\cdot\|_{\HH_\mu}$ is then equivalent to the standard $H^1(\Omega)$-norm with constants depending on the diffusion $\epsilon$. This works perfectly when the discretization (adaptive or not) resolves
the boundary layers.
However,  when layers are not resolved, 
although stable, the scheme \eqref{system-disc} would give rise to unpleasant
numerical artifacts, due to the nature of the involved norms, see the detailed discussion in \cite{cdw, W}.

Therefore, we briefly recall next an alternative variational formulation of \eqref{eq:condiff} avoiding the numerical artifacts, regardless of choosing sufficiently large truth spaces that fully resolve boundary layers or not. 
 In essence, in case the finite element truth space does not resolve boundary layers this scheme behaves like a solver of the corresponding transport problem for $\epsilon = 0$ which is, however, ill-posed when insisting on zero boundary conditions on
 all of $\partial\Omega$.  
 We resort to a remedy proposed in \cite{cdw,W}.  {We retain the construction of the norms} but modify the outflow boundary condition. Instead of building them into the trial space, we impose them only weakly. To this end, let
\begin{equation*}
  \Gamma_+(\mu) := \{ x \in \partial \Omega: \, n(x) \cdot b(\mu, x) > 0 \} 
\end{equation*}
be the outflow boundary where $n(x)$ is the outward unit normal at $x$. Now, we take 
\begin{equation}
\label{barM}
  \bar{\MM}_\mu := \{ q \in H^1(\Omega): \, q|_{ \Gamma_-(\mu)} = 0 \}
\end{equation}
as a set with norm defined below. Here and in the following, restrictions to the boundary are implicitly considered in a trace sense. Thus, zero boundary conditions are only built into $\bar{\MM}_\mu$ on part of the boundary. To find a weak form of the boundary conditions at the outflow boundary recall from \eqref{err-res2} the connection of \eqref{Mu2}
with the optimization 
  problem   
\begin{equation}
\label{eq:opt}
  \|f - \bo_\mu  \bar p\|_{\oldHs'}^2 \to \min
\end{equation}
where $\bar p$ belongs  now to the larger space $\bar{\MM}_\mu$. So far we have not changed $\oldHs$ which is still $H^1_0(\Omega)$ endowed with the norm \eqref{norms}. Due to the missing outflow boundary conditions, $\bo_\mu $ has a nontrivial kernel so that the optimization problem is not uniquely solvable. One simple remedy is to add the outflow boundary condition as a penalty term:
\begin{equation}
  \|f - \bo_\mu  \bar p\|_{\oldHs'}^2 + \omega \|\bar p\|_{\outn(\mu)}^2 \to \min,
  \label{eq:opt+}
\end{equation}
where $\|\cdot\|_{\outn(\mu)}$ is a norm for $H^{1/2}_{00}(\Gamma_+(\mu))$ and $\omega > 0$, see \cite{cdw}. Practically, this weak enforcement of the outflow boundary condition applied to a subspace $W\subset \bar \MM_\mu$ has the following effect: typically boundary layers are found at the outflow boundary which are too narrow to be resolved at affordable cost. If $\omega$ is chosen small, then the enforcement of the outflow boundary condition has little weight so that it is almost ignored which, in turn, 
removes layer artifacts.
 If, however,  $W$ is sufficiently rich so as to resolve  layers,   $\inf_{q\in W}\|f - \bo_\mu   q\|_{\oldHs'}$ becomes so
 small  that the boundary penalty becomes important and the boundary conditions are approximately satisfied, see Figure \ref{fig:condiff-1}.
The rationale is that as long as the layer is not resolved the error with respect to conventional norms (including the SUPG-norm)
is mostly concentrated in the layer region, which   therefore stays, roughly speaking, as large as not realizing the boundary conditions
at the outflow boundary at all. Putting a small weight on this error contribution   actually increases accuracy away from the outflow boundary, see \cite{cdw}. 
Putting it in a slightly different way,   by allowing more freedom in the outflow boundary layer,   the $\MM_\mu$-norm is changed in such a manner that the error in the boundary layer has very small weight. This in turn allows one to better control the error away form the layer. 

To apply the theory of Section \ref{sec:recipe}, we define the test space
\begin{equation}
\label{barH} 
\begin{array}{rcl}
  \bHs & :=  & \HH_\mu\times {\outn(\mu)}' = H^1_0(\Omega) \times H^{1/2}_{00}(\Gamma_+(\mu))' , \\[3mm]
  \|[v,g]\|_{\bHs}^2 & := &  \|v\|_{\oldHs}^2 + \omega \|g\|_{{\outn}(\mu)'}^2,
\end{array}
\end{equation}
and the operator
\begin{equation*}
  \opOut p := [\bo_\mu  p, p|_{\Gamma_+(\mu)}]. 
\end{equation*}
According to the definition of the graph-norm \eqref{Mu2}, this yields the norm
\begin{equation*}
  \|p\|_{\bar{\MM}_\mu}^2 := \|\opOut p\|_{\bHs'}^2 = \| \opOut p\|_{\oldHs'}^2 + \|p\|_{{\outn}(\mu)}^2
\end{equation*}
for the trial space $\bar{\MM}_\mu$. 

Note that in the optimization problem \eqref{eq:opt+} the norm of the boundary penalty is not a dual norm. This allows us to replace the system \eqref{system-disc}, which in or case is a $3 \times 3$ block system, by the simpler system 
\begin{equation}
\label{system-disccondiff}
\begin{array}{lclccl}
  \langle R_{\oldHs} u_V,v\rangle  & + & \langle \bo_\mu  p_W,v \rangle & = & \langle f,v \rangle, & v \in \hf,\\
  \langle \bo_\mu^* u_V, q\rangle  & - & \mu \langle p_W, q \rangle_{\outn(\mu)} &=& 0,& q \in \mf,
\end{array}
\end{equation}
in all practical computations. This system is derived by the same reasoning as in Section \ref{sec:recipe} applied to the first term $\|f - \bo_\mu p_W\|_{\HH_\mu'}$ of the optimization problem \eqref{eq:opt+} only.  In \cite{cdw,W}, it is shown how to transform this system to the equivalent saddle point problem \eqref{system-disc}, so that the theory of the present paper still applies to \eqref{system-disccondiff}.

In summary, we have found a stable variational formulation of the convection-diffusion problem \eqref{eq:condiff} that fits into the general framework of Section \ref{sec:recipe}.
However, note that the spaces $\bar \HH_\mu$ and $\bar \MM_\mu$ may differ even as sets for different $\mu\in \mathcal{P}$, see \eqref{barH},
\eqref{barM}. Specifically, the dependence of $\bar \HH_\mu$ on $\mu$ lies only in the boundary conditions.
However, for a polyhedral domain $\Omega$ one can find a finite cover $\{ \mathcal{P}_l:   l=1,\ldots, P\}$ of 
$\mathcal{P}$ so that the outflow boundary portions $\Gamma_+(\mu)=\Gamma_{+,l}$ stay the same for $\mu\in \mathcal{P}_l$.
Hence, the spaces $\bar \HH_\mu, \bar \MM_\mu$ all agree as sets for $\mu\in \mathcal{P}_l$. Clearly, the solution manifold
$\mathcal{M}$ (see \eqref{sol-M}) is a finite union of solution manifolds $\mathcal{M}(l)$ corresponding to the subsets $\mathcal{P}_l$.
Since each $\mathcal{M}(l)$ is compact so is the finite union $\mathcal{M}$. Note that for $\mu\in\mathcal{P}_l$ the Riesz map $R_{\bar \HH_\mu}$
is independent of $\mu$. Therefore, we can apply Theorem \ref{thm-M} to each component $\mathcal{P}_l$ leading
to the following result.
\begin{cor}
\label{corcondiff}
The scheme \textsc{DG-1} based on \eqref{system-disccondiff} is rate-optimal.
\end{cor}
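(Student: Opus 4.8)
The plan is to deduce Corollary \ref{corcondiff} by reducing the parameter-dependent convection-diffusion problem to finitely many subproblems, each of which falls directly under Theorem \ref{thm-M}. First I would recall the construction already laid out in this section: the stable variational formulation \eqref{system-disccondiff} is equivalent (as shown in \cite{cdw,W}) to a saddle point problem of the form \eqref{system-disc}, with trial space $\bar\MM_\mu$ normed by the graph norm \eqref{Mu2} relative to the test space $\bHs$. The key structural observation is that the only $\mu$-dependence of the spaces $\bar\HH_\mu$ and $\bar\MM_\mu$ enters through the inflow/outflow boundary portions $\Gamma_\pm(\mu)$. For a polyhedral domain $\Omega$ the map $\mu\mapsto\Gamma_+(\mu)$ takes only finitely many values, so there is a finite partition $\{\mathcal{P}_l\}_{l=1}^P$ of $\mathcal{P}$ on which $\Gamma_+(\mu)\equiv\Gamma_{+,l}$ is constant; consequently $\bar\HH_\mu$, $\bar\MM_\mu$, and in particular the Riesz map $R_{\bar\HH_\mu}$, agree as sets/operators for all $\mu$ in a fixed cell $\mathcal{P}_l$.

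Next I would verify that on each cell $\mathcal{P}_l$ the hypotheses of Theorem \ref{thm-M} are met. Restricting to $\mathcal{P}_l$, the spaces $\bar\HH_\mu$ all coincide with a single reference space $\bar\HH_{(l)}$, and the norms $\|\cdot\|_{\bar\HH_\mu}$ from \eqref{barH} are uniformly equivalent over $\mathcal{P}_l$ — the $\epsilon$-weighted $H^1$ part is $\mu$-independent in its domain, and the coefficient $(c-\tfrac12\vdiv b(\mu))^{1/2}$ varies continuously over the compact set $\overline{\mathcal{P}_l}$ with the nonnegativity \eqref{eq:assumption-2}. Hence both \eqref{allHequiv} and \eqref{allMequiv} hold on $\mathcal{P}_l$ (the latter via the norm equivalence \eqref{eq:M-equiv} between $\|\cdot\|_{\bar\MM_\mu}$ and any fixed reference norm on the common set $\bar\MM_{(l)}$), and $R_{\bar\HH_\mu}$ is $\mu$-independent so Assumption \ref{assM} is satisfied. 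The affine dependence \eqref{affine} of $b_\mu$ is inherited on $\mathcal{P}_l$. Therefore Theorem \ref{thm-M}(b) applies to the solution manifold $\mathcal{M}(l):=\{p(\mu):\mu\in\mathcal{P}_l\}$: running \textsc{DG-1} on $\mathcal{P}_l$ produces reduced spaces whose greedy error $\sigma_n(\mathcal{M}(l))$ matches the decay of $d_n(\mathcal{M}(l))$ up to the constants depending on $\delta,\delta_\mathcal{N},\zeta$ and the equivalence constants, with $\dim(\HH_n\times\MM_n)\le(1+m_B)n$.

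Finally I would assemble the pieces. Each $\mathcal{M}(l)$ is compact (closed parameter cell, continuous dependence of solutions), so the full solution manifold $\mathcal{M}=\bigcup_{l=1}^P\mathcal{M}(l)$ is a finite union of compacts, hence compact, and the $n$-width benchmark applies to it. Since $d_n(\mathcal{M})_{\bar\MM}\le P\max_l d_n(\mathcal{M}(l))_{\bar\MM}$ up to constants — or more simply, running the double greedy on all of $\mathcal{P}$ is dominated cell by cell — polynomial or exponential decay of $d_n(\mathcal{M})$ forces the same decay on each $d_n(\mathcal{M}(l))$, and applying Theorem \ref{thm-M} on each cell and combining gives rate-optimal performance of \textsc{DG-1} over all of $\mathcal{P}$, which is the assertion of the corollary. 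The main obstacle I anticipate is purely bookkeeping rather than conceptual: one must be careful that the finitely many cells $\mathcal{P}_l$ can genuinely overlap only on measure-zero boundary sets where $n(x)\cdot b(\mu,x)=0$, and that the greedy algorithm's behavior on the union is controlled by its behavior on the cells — this is where a slightly delicate argument relating $\sigma_n(\mathcal{M})$ to $\max_l\sigma_n(\mathcal{M}(l))$ (accounting for the fact that \textsc{DG-1} does not a priori know the partition) is needed, but it follows from the standard observation that a greedy error over a union of sets is bounded by the sum of the greedy errors over the pieces.
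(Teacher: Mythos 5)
Your proposal follows essentially the same route as the paper: exploit the finitely many values of $\Gamma_+(\mu)$ on a polyhedral domain to partition $\mathcal{P}$ into finitely many cells $\mathcal{P}_l$ on which $\bar\HH_\mu,\bar\MM_\mu$ (and $R_{\bar\HH_\mu}$) are $\mu$-independent, observe that $\mathcal{M}$ is a finite union of the compact $\mathcal{M}(l)$, and apply Theorem~\ref{thm-M} cell by cell. You go slightly further than the paper by explicitly flagging the gluing subtlety (how the greedy error over the union relates to the cell-wise greedy errors), which the paper glosses over with ``apply Theorem~\ref{thm-M} to each component'' — this is a reasonable point, though your closing claim that the greedy error on a union is bounded by the sum of the cell-wise greedy errors would itself need a more careful justification if one insisted on a single run of \textsc{DG-1} over all of $\mathcal{P}$ rather than a piecewise run.
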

In this case the online evaluations can be based on Remark \ref{remaltcomp}.
Some first numerical experiments are presented in the following section.
\subsection{Numerical Experiments for Convection-Diffusion Problems}

We consider the convection-diffusion problem
\begin{equation}
  -\epsilon \Delta p + \begin{pmatrix} \cos \mu \\ \sin \mu \end{pmatrix} \cdot \nabla p + p  = 1
      , \,\,  \text{in } \Omega = (0,1)^2,\quad
    p  = 0, \, \text{ on } \partial \Omega.
\label{eq:ex1}
\end{equation}
In all test cases we use the variational fomulation based on \eqref{eq:opt+} regardless of the choice of the truth spaces.}
First, we treat scenario (a), i.e.,   with $\epsilon = 2^{-5}$ which is already convection dominated. 
However, we use a truth space that completely resolves  the layers. 
Specifically, for $\MM_\truth$ and $\HH_\truth$ we choose bilinear finite elements which are continous on a rectangular uniform grid of meshsize $2^{-9}$ and $2^{-10}$, respectively. 
For all computations we used an equidistant sample set $\mathcal{S} \subset \mathcal{P}=[0.2, \pi-0.2]$ of cardinality 500.
Using finite element a-posteriori error estimators from \cite{cdw}, based on \eqref{R*}, the respective truth-accuracy is bounded by $0.00569966$. 
We note that these a-posteriori bounds represent the truth residual and hence the energy error only within some fixed constants. 
This is in contrast to the surrogate bounds for the reduced spaces which are much tighter.
The number of adaptively generated basis functions for the reduced test space together with the corresponding constant of the $\delta$-proximality, as well as the maximal  surrogate are given in Table \ref{tab:condiff}\subref{tab:condiff-1}. 
Figure \ref{fig:condiff-1} shows a reduced basis solution for the angle $\mu = 0.885115$. 
The parameter dependent direction of the first order term is visualized by a plane.
\begin{figure}[h]
    \centering
    \subfloat[$\epsilon = 2^{-5}$]{
        \includegraphics[width=0.3\textwidth]{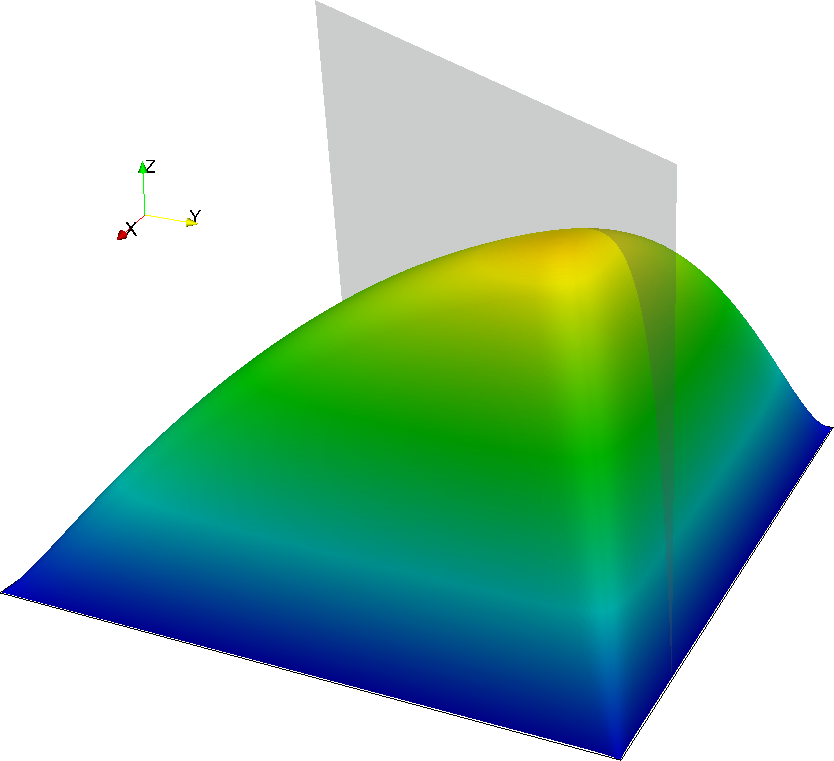}
        \label{fig:condiff-1}}\qquad
    \subfloat[$\epsilon = 2^{-7}$]{
        \includegraphics[width=0.3\textwidth]{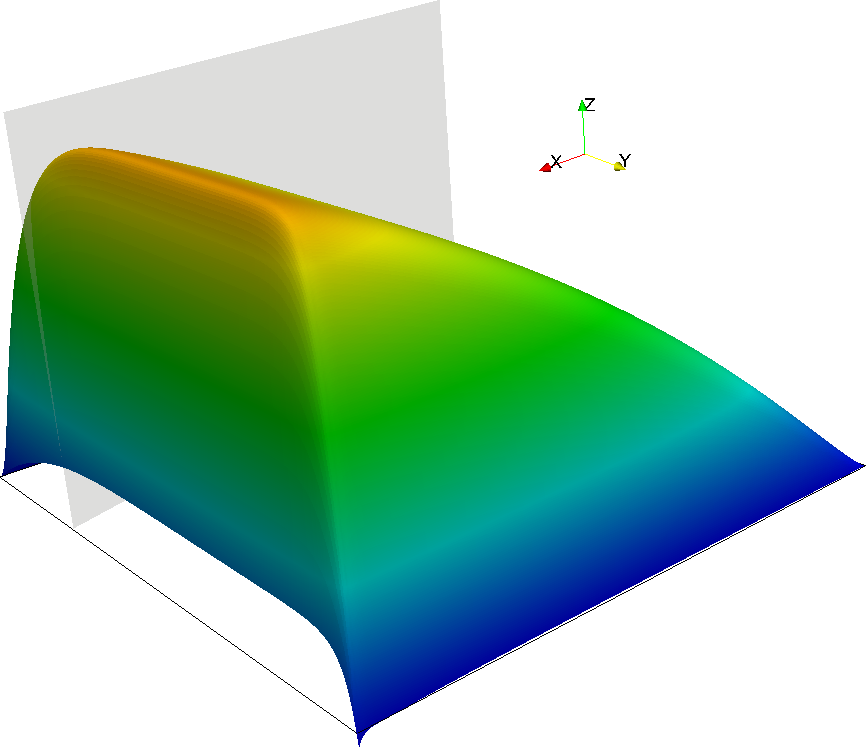}
        \label{fig:condiff-3}}
    \subfloat[$\epsilon = 2^{-26}$]{
        \includegraphics[width=0.3\textwidth]{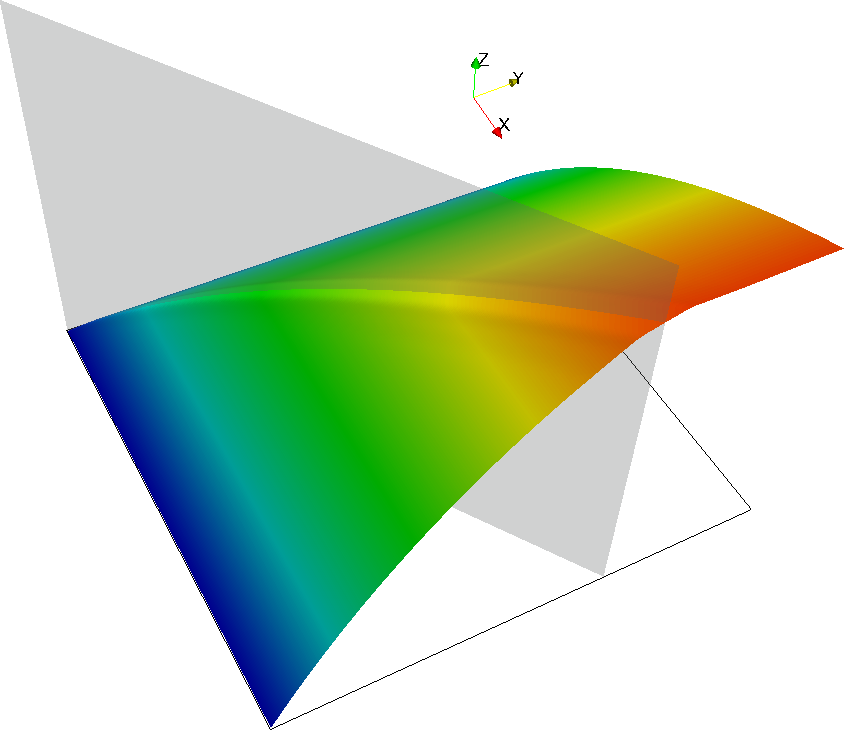}
        \label{fig:condiff-2}}
    \caption{Solutions of the convection-diffusion problem \eqref{eq:ex1}. 
    (a) RB dimension $n=6$, $m(n)=13$, angle $\mu = 0.885115$,
    (b) RB dimension $n=7$, $m(n)=20$, angle $\mu = 0.257484$,
    (c) RB dimension $n=20$, $m(n)=57$, angle $\mu = 0.587137$}
    \label{penalty}
    \begin{tikzpicture}
        \begin{axis}[
                title={$\epsilon=2^{-5}$},
                ymin=0,
                xlabel={reduced basis trial dimension},
                width=0.32\textwidth
            ]
            \addplot table[x=dim_trial,y=max_residual] {pictures/all-cases-14};
        \end{axis}
    \end{tikzpicture}~
    \begin{tikzpicture}
        \begin{axis}[
                title={$\epsilon=2^{-7}$},
                ymin=0,
                xlabel={reduced basis trial dimension},
                width=0.32\textwidth
            ]
            \addplot table[x=dim_trial,y=max_residual] {pictures/all-cases-8};
        \end{axis}
    \end{tikzpicture}~
    \begin{tikzpicture}
        \begin{axis}[
                title={$\epsilon=2^{-26}$},
                ymin=0,
                xlabel={reduced basis trial dimension},
                width=0.32\textwidth
            ]
            \addplot table[x=dim_trial,y=max_residual] {pictures/all-cases-9};
        \end{axis}
    \end{tikzpicture}
    \caption{Surrogates of the reduced basis approximation for the convection-diffusion problem \eqref{eq:ex1}.}
    \label{fig:condiff-errors}
\end{figure}
\begin{table}[htb]
    \centering
    \subfloat[$\epsilon = 2^{-5}$, maximal a-posteriori error 0.00569966]{
        \begin{tabular}{|c|c|c|c|c|}
            \hline
            \multicolumn{2}{|c|}{dimension} & & maximal & surr / \\
            trial & test & $\delta$ & surrogate & a-post \\
            \hline
            \hline
            2 & 3 & 2.51e-01 & 7.04e-02 & 1.24e+01 \\
            3 & 6 & 3.74e-01 & 3.08e-02 & 5.40e+00 \\
            4 & 7 & 3.74e-01 & 7.43e-03 & 1.30e+00 \\
            5 & 10 & 3.51e-01 & 5.81e-03 & 1.02e+00 \\
            6 & 13 & 1.86e-01 & 5.70e-03 & 1.00e+00 \\
            \hline
        \end{tabular}
        \label{tab:condiff-1}
    }
    \subfloat[$\epsilon = 2^{-7}$, maximal a-posteriori error 0.0197011]{
        \begin{tabular}{|c|c|c|c|c|}
            \hline
            \multicolumn{2}{|c|}{dimension} & & maximal & surr / \\
            trial & test & $\delta$ & surrogate & a-post \\
            \hline
            \hline
            2 & 5 & 8.92e-03 & 1.25e-01 & 6.37e+00 \\
            3 & 8 & 1.22e-01 & 9.65e-02 & 4.90e+00 \\
            4 & 11 & 1.13e-02 & 3.21e-02 & 1.63e+00 \\
            5 & 14 & 1.27e-02 & 2.61e-02 & 1.32e+00 \\
            6 & 17 & 5.55e-03 & 2.21e-02 & 1.12e+00 \\
            7 & 20 & 4.82e-03 & 1.97e-02 & 1.00e+00 \\
            \hline
        \end{tabular}
        \label{tab:condiff-3}
    }\\
    \subfloat[$\epsilon = 2^{-26}$, maximal a-posteriori error 0.001055]{
        \begin{tabular}{|c|c|c|c|c||c|c|c|c|c|}
            \hline
           \multicolumn{2}{|c|}{dimension} & & maximal & surr / & \multicolumn{2}{|c|}{dimension} & & maximal & surr / \\ 
           trial & test & $\delta$ & surrogate & a-post &trial & test & $\delta$ & surrogate & a-post \\
            \hline
            \hline
            2  & 5  & 1.35e-03 & 2.11e-01 & 2.00e+02 & 12 & 33 & 3.47e-04 & 1.60e-02 & 1.52e+01 \\
            4  & 9  & 1.09e-02 & 7.58e-02 & 7.19e+01 & 14 & 39 & 1.10e-04 & 8.46e-03 & 8.02e+00 \\
            6  & 15 & 1.61e-03 & 5.02e-02 & 4.76e+01 & 16 & 45 & 9.39e-05 & 7.87e-03 & 7.46e+00 \\
            8  & 21 & 7.99e-04 & 2.39e-02 & 2.26e+01 & 18 & 51 & 6.11e-05 & 7.69e-03 & 7.29e+00 \\
            10 & 27 & 3.55e-04 & 2.10e-02 & 2.00e+01 & 20 & 57 & 5.28e-05 & 6.35e-03 & 6.02e+00 \\
            \hline
        \end{tabular}
        \label{tab:condiff-2}
    }
    \caption{Numerical results for the convection-diffusion problem (\ref{eq:ex1}).}
    \label{tab:condiff}
\end{table}

The intermediate case $\epsilon = 2^{-7}$ shown in Figure \ref{fig:condiff-3} demonstrates how the formulation handles a not
fully resolved boundary layer which is not far-off being resolved either.

The other example, scenario (b), refers to the same problem \eqref{eq:ex1} again, however, with a very small viscosity $\epsilon = 2^{-26}$.
Hence,  this case is even more strongly  convection dominated and poses difficulties for resolving the boundary layers even for the truth space itself. 
Since the boundary layers are not resolved \eqref{eq:opt+} does not strictly enforce
strong boundary conditions at the outflow boundary even in the truth space. Accordingly, the approximate solutions 
from the reduced space do not satisfy the boundary conditions in a strict sense either.
In fact, we choose the same truth spaces as in the preceding experiment. 
Thus we have to employ the norms \eqref{barH} based on the variational formulation \eqref{system-disccondiff}. 
The numerical results are summarized in Table \ref{tab:condiff}\subref{tab:condiff-2} 
and a corresponding reduced basis solution is displayed in Figure \ref{fig:condiff-2}.
 
Figure \ref{fig:condiff-errors} displays a surrogate plot for the values $2^{-5}$, $2^{-7}$ and $2^{-26}$ of $\epsilon$. 
One observes that the error of the reduced basis approximation decays rapidly already for small reduced bases.
In fact, since    
$ \inf_{p \in \MM_\truth} \|f - B_\mu p\|_{\HH'_\mu} \le \inf_{p \in \MM_n} \|f - B_\mu p\|_{\HH'_\mu}$,
the error of the truth approximation is always a lower bound for the error of the reduced basis approximation. This contrasts standard reduced basis methods where one, in our terminology, chooses $\HH_\mathcal{N} = \MM_\mathcal{N}$. Instead, we assume a larger space $\HH_\mathcal{N} \in \mathcal{V}(\MM_\mathcal{N}, \delta_\mathcal{N})$ which, according to Remark \ref{WMX}, implies that the surrogate \eqref{tightsurr2} is equivalent to the true error with respect to the infinite dimensional solution,
regardless of whether the truth space resolves all solution features like boundary layers or not.
Comparing the surrogate plots with the Tables \ref{tab:condiff}, 
one sees that the reduced basis errors stagnate roughly at the error level of the truth solution.
Due to the very small $\delta$-proximality thresholds, the surrogates reflect the true reduced errors very accurately, see \eqref{tol},
Proposition \ref{rem:tight}.

Note that in all cases the inner stabilization loop produces at most $m_B=3$ addidional test basis functions for the test space, see
Proposition \ref{propmB}.

\subsection{Transport problems - the Worst Scenario}\label{sect:wcs}
We address now the transport equation \eqref{eq:transport} in Section \ref{ssect:transport}.
Aside from its essential appearance 
in more general kinetic models and 
Boltzmann type equations, it can be viewed as a ``limit'' of convection-diffusion problems. The particular interest lies
in the complete lack of viscosity as a ``classical'' stabilizing ingredient, see e.g. \cite{PU}. 
Moreover, as we shall see, the conditions \eqref{allMequiv} and \eqref{allHequiv} do {\em not}
hold simultaneously, not even for suitable subsets of $\mathcal{P}$. Moreover, the parameter dependence will be seen to
be significantly less smooth.
 
We have already proposed a variational formulation \eqref{btransport} along with the spaces $\HH_\mu, \MM_\mu$
in \eqref{transspaces} endowed with the norms \eqref{transnorms}. With these definitions, the operator $\bo_\mu   : \MM_\mu \to \HH_\mu$  is an isomorphism with condition number $1$, i.e. $\|\cdot\|_{\MM_\mu} = \|\cdot\|_{\hat\MM_\mu}= \|\cdot\|_{\HH_\mu'}$, see \cite{DHSW}.
Notice that in this case the Riesz map $R_{\HH_\mu}$ are given by
\begin{equation*}
R_{\HH_\mu}= B_\mu B_\mu^*,\quad \mbox{i.e.,} \,\,\,  (v,w)_{\HH_\mu}= \langle B_\mu^*v,B_\mu^*w\rangle,\quad
\|\cdot\|_{\hat\MM_\mu}= \|\cdot\|_{L_2(\Omega)},
\end{equation*}
so that \eqref{affine} and \eqref{allMequiv} are  valid. Finally,
  the Riesz maps $R_{\HH_\mu}$ and $R_{\MM_\mu}$ depend affinely on the parameter so that the double greedy scheme can be applied. 
  
However, since the $\HH_\mu$-norm is {\em not} independent of $\mu$, we cannot evaluate the surrogate given by 
\eqref{tightsurr2} 
in the usual way.
As a remedy, we use the surrogate $R_n'(\mu)$ from \eqref{tightsurr3}, i.e.,  we approximate this inverse Riesz map by projecting on the reduced basis space 
$\HH_n$ instead of the truth space
$\HH_\mathcal{N}$. To ensure that this surrogate is also tight we take up the criterion in Remark \ref{rem:MnX}.
Specifically, we wish to apply Lemma \ref{lem:tight} and try to construct suitable pairs $\bar\MM, \bar\HH$ as follows.

We run the double greedy scheme (possibly) several times which yields the sequences of 
reduced spaces $\MM^i_1, \MM^i_2, \dots$ and $\HH^i_1, \HH^i_2, \dots$, $i=0,1,2,..$ in the $i$th run of the full double-greedy algorithm. Now, say we stop the first run at index $N$ and define $\bar{\MM} := \MM^0_N$. For the second run, we use the same inital spaces as for the first run, however, the calls of \textsc{Update-Inf-Sup$(\HH^1_n, \MM^1_n)$} are replaced by \textsc{Update-Inf-Sup$(\HH^1_n, \bar{\MM} + \MM^1_n)$}, so that $\delta$-proximality is guaranteed for the larger space $\bar{\MM} + \MM^1_n$. Then, with the $n$-dependent choice $\bar{\HH} = \HH^1_n$, the estimate \eqref{eq:estimator-stab-2} implies that for the second run the surrogates are tight as long as the condition \eqref{eq:rel-error} is satisfied. 

Of course, neither can  this latter condition   be rigorously checked since we cannot rely on the surrogates, nor
have we specified the terminating index $N=N_0$. We briefly sketch now   several options
of iteratively tightening the surrogates $R_n'(\mu)$. One could stop the first run $i=0$ at the  smallest $N_0$ for which
$R'_{N_0}(\mu)/\tau_\mathcal{N} \leq \alpha$ for some $\alpha \ll 1$, where $\tau_\mathcal{N}$ is the truth error tolerance.  The second run $i=1$ with $\bar\MM=\bar\MM^1=\MM_{N_0}^0$ will stop at step $N_1$. In general,
the $i$th run with $\bar\MM^i =\bar\MM^{i-1} + \MM_{N_{i-1}}^{i-1}$ stops at $N_i$.
 One expects that $N_{i+1} \geq N_i$ since the surrogates, being lower bounds for the true residuals, become tighter as long as $\bar\MM^i$ grows.
A practical stopping criterion would be, for instance, that $N_{i+1}\leq N_i$, or $R'_{N_i}(\mu)/\|f- B_\mu p_{N_i}(\mu)\|_{\HH_\mathcal{N}'}
\sim R'_{N_{i+1}}(\mu)/\|f- B_\mu p_{N_{i+1}}(\mu)\|_{\HH_\mathcal{N}'}$.

An alternative strategy is to apply the double greedy scheme to the defect problem
\begin{equation*}
B_\mu \bar p(\mu) = f- B_\mu p_{N_0}(\mu),\quad \mu\in \mathcal{P},
\end{equation*}
and form $\bar\MM$ as the sum of $\MM_{N_0}^0$ and the largest reduced space for the defect problem. Since the relative accuracy to be achieved for the 
defect problem only needs to meet the constant $\xi$ in \eqref{eq:rel-error} one expects that a few steps suffice. Since $\bar\MM$ now contains
``complementary'' information $\MM_{N_0}^0$ is enlarged more effectively than in the first method.

The upshot of these comments is  that investing additional computational offline effort is guaranteed to tighten the surrogates
and thereby improves the choice of the reduced spaces. This is  in contrast to greedy strategies based on surrogates that are not based
on well-conditioned variational formulations
and  therefore most likely fail to detect   the most effective snapshots. 
These issues will be addressed in forthcoming work.

 Since the basis function $\phi_j\in \MM_n$ can now be
orthonormalized in $L_2(\Omega)$ and $\|\cdot\|_{\hat \MM_\mu}=\|\cdot\|_{L_2(\Omega)}$,  
 Theorem \ref{thm-M} applies and yields the following result.

\begin{cor}
\label{cor-transport}
If $R_n'(\mu)$ from \eqref{tightsurr3} is based on iterative tightening with $\bar\MM_{i(n)}$ satisfying \eqref{eq:rel-error}
for sufficiently small $\xi$, 
then the scheme \textsc{DG-1}
using \textsc{Update-inf-sup} is rate-optimal for $\mathcal{M}_{\MM}$.
\end{cor}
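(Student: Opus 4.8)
The plan is to reduce the statement to Theorem~\ref{thm-M}(a), the one remaining gap being that for the transport equation the well-conditioned surrogate \eqref{tightsurr2} is infeasible and must be replaced by $R_n'(\mu)$ from \eqref{tightsurr3}. First I would record the structural facts already collected above: by \eqref{transnorms} and \eqref{specialRiesz} one has $\|\cdot\|_{\hat\MM_\mu}=\|\cdot\|_{L_2(\Omega)}$ for every $\mu\in\mathcal{P}$, so \eqref{allMequiv} holds with $c_\circ=C_\circ=1$ and reference norm $\|\cdot\|_\MM=\|\cdot\|_{L_2(\Omega)}$, while $\bo_\mu$, $R_{\HH_\mu}=B_\mu B_\mu^*$ and $R_{\hat\MM_\mu}$ all depend affinely on $\mu$; in particular Assumption~\ref{assM} is in force and the $L_2$-orthonormalisation in \textsc{Update-approximation} is legitimate. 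Only \eqref{allHequiv} fails (Remark~\ref{remYdiffer}), which is exactly why $R_n'(\mu)$ has to be used in place of \eqref{tightsurr2}.

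The core step is to show that, under the iterative-tightening construction preceding the corollary, $R_n'(\mu)$ is \emph{uniformly} tight. At each outer index $n$ the construction supplies a pair $\bar\MM_{i(n)}\subset\MM_\mathcal{N}$ and $\bar\HH=\HH_n$ such that the modified stabilisation call \textsc{Update-inf-sup}$(\HH_n,\bar\MM_{i(n)}+\MM_n)$, which terminates after finitely many steps by Proposition~\ref{thmterm}, yields (via Propositions~\ref{prop:stab2} and \ref{propstab}, with $C_M=1$ here) standard $\delta$-proximality of $\bar\MM_{i(n)}+\MM_n$ and $\bar\HH$ for some fixed $\delta<1$; in particular $(\MM_n,\HH_n)$ is itself $\delta$-proximal, so $p_n(\mu)$ enjoys the best-approximation property \eqref{BAP1}. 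Feeding this $\delta$-proximality together with the assumed relative-error estimate \eqref{eq:rel-error} (for $\xi$ small) into Lemma~\ref{lem:tight} produces \eqref{bardelta} with $\bar\delta=(1+\delta)\xi+\delta<1$, hence the strengthened proximality \eqref{eq:estimator-stab-3}, and therefore (Remark~\ref{rem:MnX}) the residual equivalence $\|f-B_\mu q\|_{\HH_n'}\sim\|f-B_\mu q\|_{\HH_\mu'}$ on $\MM_n$; Lemma~\ref{lem:tight} then concludes that $R_n'(\mu)$ is tight with $\kappa(R_n')\le[(1-\bar\delta^2)^{1/2}(1-\delta)]^{-1}$. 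With $\delta$ and $\xi$ fixed once and for all, this bound, and hence $\kappa(R_n')$, is a constant independent of $n$ and $\mu$.

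With uniform tightness in hand the conclusion follows as for Theorem~\ref{thm-M}. By Remark~\ref{remweakgreedy} the snapshots selected by \textsc{Update-approximation}, now with $\hat\mu=\argmax_{\mu\in\mathcal{P}}R_n'(\mu)$, satisfy the weak-greedy inequality \eqref{gamma} with rate $\gamma=\kappa(R_n')^{-1}$, measured in the parameter-independent norm $\|\cdot\|_\MM=\|\cdot\|_{L_2(\Omega)}$. Since $d_n(\mathcal{M}_\MM)$ is a lower bound for the attainable accuracy, Theorem~\ref{thrates} --- i.e.\ the argument of Theorem~\ref{thm-M}(a) applied verbatim with $R_n'$ in place of \eqref{tightsurr2} --- delivers the algebraic respectively exponential rates \eqref{rates2} for $\sigma_n(\mathcal{M}_\MM)$, and, using \eqref{BAP2}, for $\hat\sigma_n(\mathcal{M}_\MM)$, down to the prescribed tolerance; this is precisely rate-optimality for $\mathcal{M}_\MM$.

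The hard part is the $n$-dependence hidden in the second step: $\bar\HH$ is chosen anew at every outer index, so one must check that the strengthened proximality \eqref{eq:estimator-stab-3} is really inherited by $(\MM_n,\HH_n)$ for \emph{every} $n$ and, more delicately, that $\kappa(R_n')$ stays bounded \emph{uniformly in $n$} --- which forces $\bar\delta$, hence $\xi$, to remain below a fixed threshold throughout the whole run. It is exactly this uniformity that is postulated (``for sufficiently small $\xi$'') rather than established here, because \eqref{eq:rel-error} cannot be certified online without the very surrogate one is trying to validate; the discussion preceding the corollary sketches the heuristics (repeated runs of \textsc{DG-1}, or a defect formulation) by which $\xi$ is driven down in practice. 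The remaining items --- feasibility of evaluating $R_n'$ via the reduced Riesz map under Assumption~\ref{assM}, and the bookkeeping of the norm-equivalence constants --- are routine.
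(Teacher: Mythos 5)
Your proposal follows essentially the same route as the paper: verify \eqref{allMequiv} (so $\|\cdot\|_{\hat\MM_\mu}=\|\cdot\|_{L_2(\Omega)}$ and $L_2$-orthonormalization is admissible), invoke Lemma~\ref{lem:tight} together with Remark~\ref{rem:MnX} to deduce uniform tightness of $R_n'(\mu)$ from the iterative-tightening hypothesis \eqref{eq:rel-error} with small $\xi$, and then apply Theorem~\ref{thm-M}(a) (via Theorem~\ref{thrates} and Remark~\ref{remweakgreedy}) with $R_n'$ in place of \eqref{tightsurr2}. The paper compresses all of this into the one-line remark that ``Theorem~\ref{thm-M} applies''; you have simply made the chain of references and the uniformity-in-$n$ caveat explicit, which is consistent with and faithful to the intended argument.
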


We could also reverse the roles of the spaces $\HH_\mu, \MM_u$, choosing $L_2(\Omega)$ as the test space,
see \cite{EG04, W}.
In this case the trial spaces would essentially depend on the parameter $\mu$ so that the understanding of the
solution set $\mathcal{M}$ is less clear. On the other hand, this choice would correspond to the limit   of the formulation
\eqref{system-disccondiff} for vanishing viscosity.

Since \eqref{allHequiv} does not hold we cannot apply Proposition \ref{propmB} to predict   a strict a priori bound on the number of stabilization steps in \textsc{Update-$\delta$} 
or \textsc{Update-Inf-Sup}. Adhering to the notation in Section \ref{sec:greedy},
we have here $K_\mu = B_\mu^*$, see \eqref{transnorms}. 
Since in the present case $R_\MM$ is the identity, as pointed out there, the enrichments of the test spaces $\HH_n$ are 
are
linear combinations of elements of the form $B^{-*}_{\mu'}B_{\mu''}^{-1}f$ where $\mu', \mu''$  
are different most of the time, due to the greedy selection. As a consequence, when $f\in L_2(\Omega)$, this means
that indeed $B^{-*}_{\mu'}B_{\mu''}^{-1}f\in H^1(\Omega)=\HH$. Of course, the $H^1$-norm may deteriorate
when $\mu', \mu''$ get closer, which however may be offset to some extent by the expectation that these snapshots are most relevant for the stabilization of solutions with nearby parameters.
It is also clear that higher regularity of $f$ would indeed ensure sufficient regularity of the $q\in \MM_n^1$ (or $q\in \mathcal{B}$), independently of $\mu', \mu''$ and hence
allows one to  control 
the constants $C(n,\mathcal{N})$ (or $C(\mathcal{B},\truth)$). This effect is reflected to some extent by the experiments below. 
\subsection{ Numerical Experiments for transport problems}

We consider the analog of the convection-diffusion problem \eqref{eq:ex1} with zero diffusion $\epsilon = 0$ and corresponding boundary conditions, i.e.
\begin{equation}
  \begin{pmatrix} \cos \mu \\ \sin \mu \end{pmatrix} \cdot \nabla p + p  = 1
      , \,\, \text{in } \Omega = (0,1)^2, \quad
    p  = 0, \,  \text{ on } \Gamma_-.
\label{eq:ex2}
\end{equation}
We employ a truth trial space with mesh size $2^{-8}$, using  
discontinuous piecewise bilinear finite elements with proper boundary conditions.
To ensure stable truth discretizations, the test truth space is comprised of globally continuous piecewise bilinear finite elements,
therefore being contained in  $\HH= \bigcap_{\mu\in\mathcal{P}}\HH_\mu$,
on a finer mesh with mesh size $2^{-9}$ to ensure $\delta$-proximality.
Recall that the spaces $\HH_\mu$ now
differ even as sets.
The results are shown in Table \ref{tab:transport-1} and a reduced basis solution for the angle $\mu = 0.244579$ is given in Figure \ref{fig:transport}. 
Specifically, in addition to the dimensions of the trial and test spaces in columns (1)  ``trial'', (2) ``test'', it records
the values of the surrogates in column (4)  ``surr'', the error between the reduced basis solution and the best $L_2$-approximation
of the exact solution in the truth space in column (6)  ``rb L2'', the error between the 
reduced basis solution and the truth solution in column (5)  ``rb truth'', and finally in column (7) ``surr/err'' 
the ratio between the computed surrogate and the error in ``rb L2''. All values reflect the worst case over the parameter range. 

As pointed out above, unlike the convection-diffusion problem, the surrogate \eqref{tightsurr3} for the transport problem is not necessarily well-conditioned
 from the start. 
Therefore, Table \ref{tab:transport-1} contains one column which shows the ratio of the surrogate compared to the true error of the reduced basis approximation, 
maximized over a sample of the angles with the largest values of the surrogate. 
 Although this is at this point not founded rigorously, we see that this ratio stays uniformly bounded with respect to
  the size of the reduced basis.  Hence it already does reflect the accuracy of the reduced model.
  However, the ratio ist not close to one yet, as it would be for 
  a well-conditioned  surrogate given by the truth-exact evaluation of the residual corresponding to a well-conditioned variational formulation.
  To further improve this ratio by approximating the   residual more accurately, we resort to iterative tightening as described above.

The results for a single iteration are recorded in Table \ref{tab:transport-2}. 
It is seen that already after a single run the ratio of the surrogate and the true error between the reduced basis approximation and true solution has become much closer to one. 

In agreement with the discussion in Section \ref{sec:greedy} the experiments show that a slightly larger number of test basis functions than for the convection-diffusion problem is needed here.
In particular, unlike in the elliptic case non-smooth data (right hand side, boundary shape, and boundary conditions) affect 
the smoothness of the dependence of the solutions on the parameter. In our examples at most a low order polynomial decay
of the $n$-widths can be expected.  According to Summary \ref{greed-summ} in Section \ref{sec:greedy}, since the right hand side is actually smooth,   we expect that  $\sigma_{n,j}$, defined in \eqref{sigmanj}, that controls the termination of the inner stabilization loop  drops below the desired $\delta <1$ after an acceptable bounded number of steps independent of the dimension of the truth space.
In fact, one observes that 
  the growth of the test basis stays surprisingly moderate.

\begin{table}
  \centering
  \begin{tabular}{|c|c|c|c|c|c|c|}
      \hline
      \multicolumn{2}{|c|}{dimension} & & maximal & \multicolumn{2}{|c|}{maximal error between} & surr / \\
      trial & test & $\delta$ & surr &  rb truth &  rb L2 & err \\
      \hline
      \hline
      4 & 11 & 3.95e-01 & 8.44e-03 & 2.45e-02 & 2.45e-02 & 3.45e-01 \\
      6 & 17 & 4.49e-01 & 7.06e-03 & 1.40e-02 & 1.40e-02 & 5.04e-01 \\
      8 & 25 & 4.87e-01 & 4.16e-03 & 9.05e-03 & 9.05e-03 & 4.60e-01 \\
      10 & 33 & 4.32e-01 & 3.37e-03 & 5.74e-03 & 5.74e-03 & 5.87e-01 \\
      12 & 40 & 4.83e-01 & 2.65e-03 & 4.65e-03 & 4.65e-03 & 5.71e-01 \\
      14 & 48 & 4.23e-01 & 1.64e-03 & 3.39e-03 & 3.39e-03 & 4.83e-01 \\
      16 & 57 & 4.32e-01 & 1.50e-03 & 2.56e-03 & 2.56e-03 & 5.84e-01 \\
      18 & 65 & 4.66e-01 & 1.17e-03 & 2.33e-03 & 2.33e-03 & 5.03e-01 \\
      20 & 74 & 4.16e-01 & 1.21e-03 & 2.10e-03 & 2.10e-03 & 5.77e-01 \\
      22 & 83 & 3.83e-01 & 1.02e-03 & 1.93e-03 & 1.93e-03 & 5.29e-01 \\
      24 & 91 & 4.05e-01 & 7.27e-04 & 1.58e-03 & 1.58e-03 & 4.61e-01 \\
      \hline
  \end{tabular}
  \caption{Numerical results for the transport problem \ref{eq:ex2}, maximal error truth L2 0.000109832.}
  \label{tab:transport-1}
\end{table}

\begin{figure}[htb]
    \hfill
    \begin{minipage}[c]{0.49\textwidth}
  \begin{center}
      \includegraphics[width=0.75\textwidth]{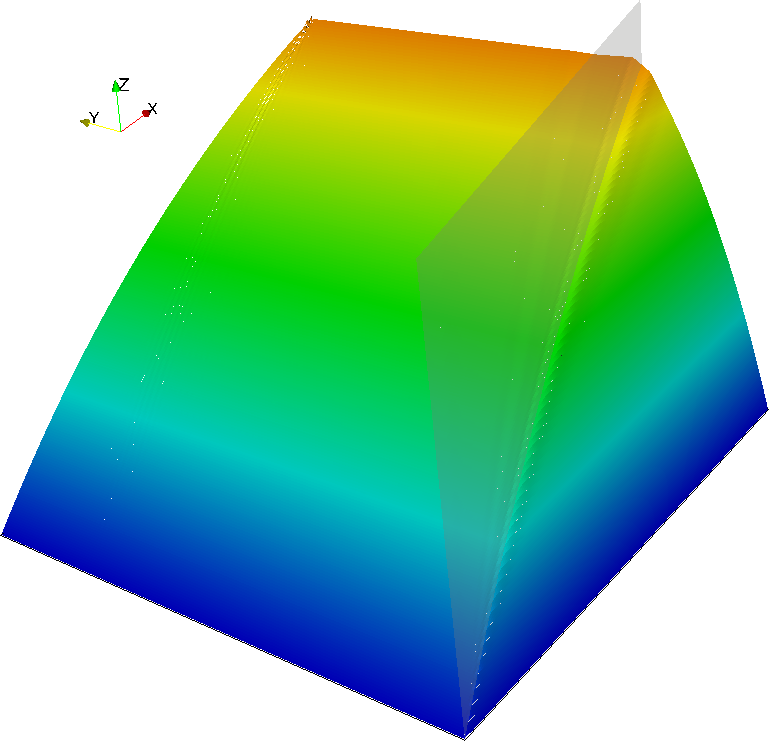}
  \end{center}
  \caption{Solution of the transport problem \eqref{eq:ex2}, with reduced basis of dimension $n=24$, $m(n)=91$ and angle $\mu = 0.244579$.}
  \label{fig:transport}
  \end{minipage}
  \hfill
    \begin{minipage}[c]{0.49\textwidth}
  \begin{center}
      \includegraphics[width=0.75\textwidth]{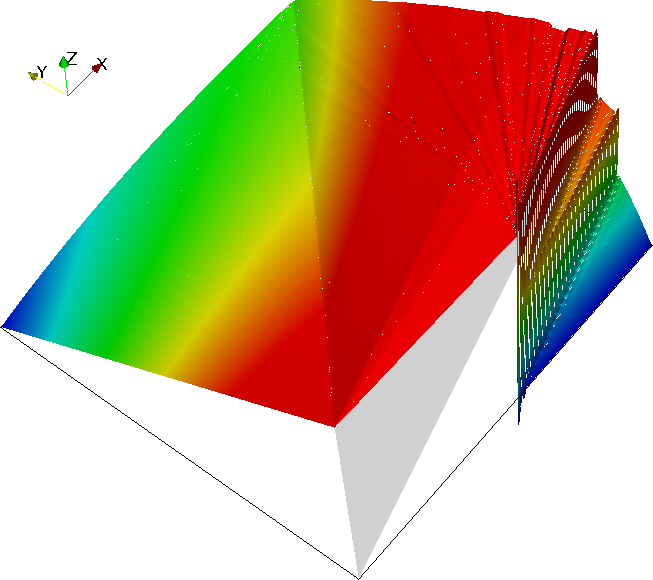}
  \end{center}
  \caption{Solution of the transport problem \eqref{eq:ex3}, with reduced basis of dimension $n=24$, $m(n)=96$ and angle $\mu = 0.256311$.}
  \label{fig:transport-3}
  \end{minipage}
  \hfill~
\end{figure}

\begin{table}
  \centering
  \begin{tabular}{|c|c|c|c|c|c|c|}
      \hline
      \multicolumn{2}{|c|}{dimension} & & maximal & \multicolumn{2}{|c|}{maximal error between} & surr / \\
      trial & test & $\delta$ & surr & rb truth & rb L2 & err \\
      \hline
      \multicolumn{7}{|c|}{first reduced basis creation} \\ 
      \hline
      20 & 81 & 3.73e-01 & 2.71e-02 & 5.46e-02 &  5.62e-02 & 4.82e-01 \\
      \hline
      \multicolumn{7}{|c|}{second reduced basis creation} \\ 
      \hline
      10 & 87 & 3.51e-01 & 6.45e-02 & 7.40e-02 &  7.53e-02 & 8.57e-01 \\
      \hline
  \end{tabular}
  \caption{Numerical results for the transport problem \ref{eq:ex2} after a single cycle of iterative tightening.
  Maximal error truth L2 0.0154814}
  \label{tab:transport-2}
\end{table}

Finally, Table \ref{tab:transport-3} and Figure \ref{fig:transport-3} show the results for the problem 
\begin{equation}
\begin{aligned}
  \begin{pmatrix} \cos \mu \\ \sin \mu \end{pmatrix} \cdot \nabla p + p  & = 
      \left\lbrace
      \begin{array}{cl}
          0.5 & x < y \\
          1 & x \geq y
      \end{array}
      \right.
     , & & \text{ in } \Omega = (0,1)^2, \quad
     p &=  
    \left\lbrace
    \begin{array}{cl}
        1-y &  x \leq 0.5 \\
        0 &  x > 0.5
    \end{array}
    \right.
    , & & \text{ on } \Gamma_-.
\end{aligned}
\label{eq:ex3}
\end{equation}
Now the right hand side as well as the boundary conditions exhibit  {jump discontinuities where the latter is transported trough the domain}. 
This causes a further significant reduction of the smoothness of the dependence on the solutions on the parameter.
Problem \eqref{eq:ex3} therefore represents an extreme example involving interacting jump discontinuities caused
by the right hand side and by the boundary conditions.
The small  ripples observed in the solution plot  Figure \ref{fig:transport-3} originate from the superposition of the jumps of the various snapshots involved in the solution.
Since they do neither grow nor expand   one can conclude that the scheme is in fact stable. 

As indicated before,  varying the transport direction for such data
shows that the dependence of the solution on the parameter is even less smooth than in the previous example
so that the Kolmogorov widths of the solution manifold are expected to decay more slowly. Hence the greedy errors cannot decay too rapidly either. Again, by Summary \ref{greed-summ}, the quantities $\sigma_{n,j}$ in \eqref{sigmanj}, estimating the
number of stabilization steps for $\MM_n$, are expected to
decay even more slowly than in the case of zero boundary conditions. Tabel \ref{tab:transport-3} confirms this in that slightly more test 
basis functions are generated than in example \ref{eq:ex2}.
Nevertheless, one observes that in the initial phase already  a few reduced basis functions decrease the error
very effectively so that a reduced space with trial dimension as low as ten realizes an accuracy that would require a conventional  finite element space
of much larger dimension. Overall, the performance, at least in the given range of truth accuracy, is  only slightly weaker than 
for the milder case of
zero inflow boundary conditions. The precise implications on the approximation of functionals of the solution and possible 
strategies for alternative ways of enriching the trial dictionary will be explored in forthcoming work.

\begin{table}
  \centering
  \begin{tabular}{|c|c|c|c|c|c|c|}
      \hline
      \multicolumn{2}{|c|}{dimension} & & maximal & \multicolumn{2}{|c|}{maximal error between} & surr / \\
      trial & test & $\delta$ & surr &  rb truth &  rb L2 & err \\
      \hline
      \hline
      4 & 14 & 4.97e-01 & 5.91e-02 & 1.29e-01 & 1.30e-01 & 4.54e-01 \\
      6 & 23 & 4.92e-01 & 4.29e-02 & 1.00e-01 & 1.02e-01 & 4.22e-01 \\
      8 & 31 & 4.29e-01 & 4.34e-02 & 7.78e-02 & 7.95e-02 & 5.46e-01 \\
      10 & 40 & 4.15e-01 & 3.84e-02 & 7.78e-02 & 7.95e-02 & 4.83e-01 \\
      12 & 49 & 3.71e-01 & 3.48e-02 & 7.40e-02 & 7.53e-02 & 4.63e-01 \\
      14 & 57 & 3.76e-01 & 3.12e-02 & 6.20e-02 & 6.41e-02 & 4.87e-01 \\
      16 & 64 & 3.74e-01 & 2.99e-02 & 6.20e-02 & 6.41e-02 & 4.67e-01 \\
      18 & 73 & 4.63e-01 & 2.86e-02 & 6.20e-02 & 6.41e-02 & 4.47e-01 \\
      20 & 81 & 3.73e-01 & 2.71e-02 & 5.46e-02 & 5.62e-02 & 4.82e-01 \\
      22 & 87 & 4.09e-01 & 2.42e-02 & 5.46e-02 & 5.62e-02 & 4.32e-01 \\
      24 & 96 & 3.91e-01 & 2.51e-02 & 4.51e-02 & 4.79e-02 & 5.25e-01 \\
      \hline
  \end{tabular}
  \caption{Numerical results for the transport problem \ref{eq:ex3}, maximal error truth L2 0.0154814.}
  \label{tab:transport-3}
\end{table}

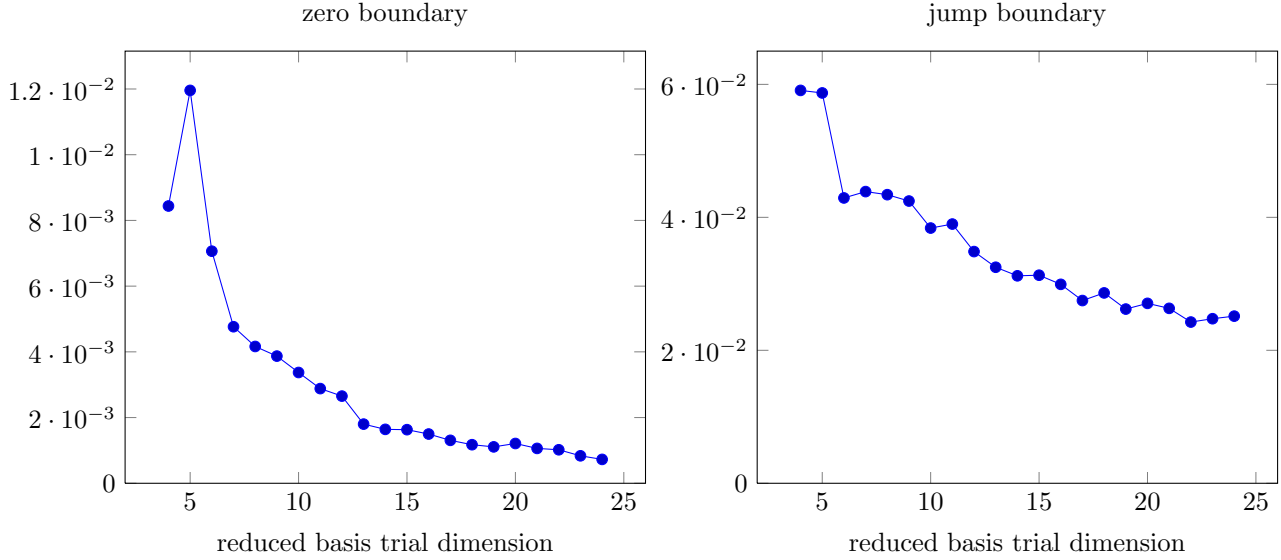
\begin{figure}[htb]
    \begin{center}
        \begin{tikzpicture}
            \begin{axis}[
                title={zero boundary},
                ymin=0,
                xlabel={reduced basis trial dimension},
                width=0.5\textwidth
                ]
                \addplot table[x=dim_trial,y=m_res] {pictures/all-cases-11};
            \end{axis}
        \end{tikzpicture}~
        \begin{tikzpicture}
            \begin{axis}[
                title={jump boundary},
                ymin=0,
                xlabel={reduced basis trial dimension},
                width=0.5\textwidth
                ]
                \addplot table[x=dim_trial,y=m_res] {pictures/all-cases-12};
            \end{axis}
        \end{tikzpicture}
    \end{center}
    \caption{Surrogates of the reduced basis approximation for the transport problem \eqref{eq:ex2} and \eqref{eq:ex3}.}
    \label{fig:transport-errors}
\end{figure}

\section{General  Saddle Point Problems}\label{sectsaddle}
The crucial role of saddle point problems for the generation of well-conditioned variational formulations
is apparent from the preceding discussion. On the other hand, the concepts developed in this context
have an immediate bearing on more general saddle point problems of ``classical type''.
By this we mean (parameter dependent) variational formulations e.g.  of the Stokes system or those
arising in mixed formulations and constrained optimization problems. To see this, it is useful to point
out the main distinctions between the two settings when considering the following general formulation
for parameter dependent   bilinear forms $a_\mu(\cdot,\cdot): \HH_\mu\times \HH_\mu\to \R$, $b_\mu(\cdot,\cdot):\MM_\mu\times \HH_\mu\to
\R$
such that
for $\mu\in \mathcal{P}$ 
\begin{equation}
\label{varprob}
\begin{array}{lcc}
a_\mu(u(\mu),v) + b(p(\mu), v) & = & \langle f,v\rangle,\quad v\in \HH_\mu,\\
b_\mu(q, u(\mu)) &= & \langle g,q\rangle, \quad q\in \MM_\mu.
\end{array}
\end{equation}
For classical problems the following conditions
\begin{equation}
\label{bcont}
|a_\mu(v,w)| \leq C_a(\mu) \|v\|_{\HH_\mu} \|w\|_{\HH_\mu},\quad  |b_\mu(q, v)| \leq  C_b(\mu) \|v\|_{\HH_\mu} \|q\|_{\MM_\mu}, \quad v, w \in \HH_\mu\quad
q\in \MM_\mu,
\end{equation}
as well as
 \begin{equation}
\label{ellip}
\inf_{q\in\MM_\mu}\sup_{v\in\HH_\mu}\frac{b_\mu(q,v)}{\|q\|_{\MM_\mu}\|v\|_{\HH_\mu}}\geq \beta(\mu),\quad\quad
 a_\mu(v,v) \geq  c_a(\mu)  \|v\|^2_{\HH_\mu},\,\,v\in V(\mu),
\end{equation}
where
\begin{equation*}
V(\mu):= \{ v\in \HH_\mu: b_\mu(q, v)=0,\,\forall \, q\in M\}= {\rm ker}\,\bo_\mu^*,
\end{equation*}
 are usually met.

In comparison, the specific structure of the ``stabilizing'' saddle point problem \eqref{system-full} is the following.
\begin{rem}
\label{remellip}
$B_\mu$ is an isomorphism and for $a_\mu(v,w):= \langle R_{\HH_\mu}v,w\rangle$ one has $V(\mu)=\{0\}$ and condition \eqref{ellip} holds with $c_a(\mu)=C_a(\mu)=1$
even on $\HH_\mu$. Moreover, when using the norm $\|\cdot\|_{\hat\MM_\mu}$ on $\MM_\mu$, \eqref{bcont}
holds with $C_b(\mu)=\beta(\mu)=1$. However, on the downside, one may encounter failure of either \eqref{allHequiv} or \eqref{allMequiv}.
\end{rem}

For the classical problems considered in \cite{GV,GV2,Rozza} one can state the following.
\begin{rem}
\label{remclassical}
The conditions \eqref{allHequiv}, \eqref{allMequiv} are both satisfied so that a single reference norm
$\|\cdot\|_{\HH \times \MM}$ can be used. Hence,  that renormation \eqref{Mu2} is not necessary for achieving
tightness of residual based surrogates which now involve both component spaces $\HH'\times \MM'$
which can be evaluated by the standard offline-online decomposition, see e.g. \cite{GV, GV2}.
\end{rem}

It is well known  (see e.g. \cite{BF}) that, given \eqref{bcont} and \eqref{ellip}, the validity of the mapping property {\bf MP} and
the best approximation property {\bf BAP} hinges again on the inf-sup condition
\begin{equation}
\label{infsupVW}
 \inf_{q\in W}\sup_{v\in V}\frac{b_\mu(q, v)}{\|v\|_{\HH_\mu}\|q\|_{\MM_\mu}} \geq \beta_{V,W}(\mu).
\end{equation}
For classical problems it is known for $V=\HH, W=\MM$,
has to be ensured for the truth spaces $V=\HH_\mathcal{N}$, $W=\MM_\mathcal{N}$ through suitably chosen finite element spaces, say, and
again need to be ensured by stabilizing strategies for the reduced spaces 
$V=\HH_n$, $W=\MM_n$. 

In view of Remark \ref{remclassical}, both schemes \textsc{Update-$\delta$} and \textsc{Update-inf-sup}
can be applied. Since the spaces $\HH_n$ no longer just serve as stabilizers but need to contribute to the target approximation
accuracy of the full solution manifold
 \begin{equation}
\label{solmanifold}
\mathcal{M} := \{[u(\mu),p(\mu)]:\,\,\mbox{solves}\,\, \eqref{varprob}, \, \mu\in\mathcal{P}\} =: \mathcal{M}_{\HH}\times \mathcal{M}_{\MM}, 
\end{equation}
the only changes that need to be incorporated in a slightly modified version {\bf DG-2} 
of {\bf DG-1} are:
\begin{itemize}
\item
In {\bf Algorithm} \eqref{alg:update-approx} replace step 5 by:\\
Set
\begin{equation*}
{\rm span}\,\{\MM_n,\hat p\} \rightarrow \MM_n,\qquad {\rm span}\,\{\HH_n,\hat u\} \rightarrow \HH_n,
\end{equation*}
i.e., both component spaces are updated in the outer greedy step.
\item
In {\bf Algorithm} \eqref{alg:double-greedy} step 5 is replaced by \\
$\HH_n,  \MM_n \leftarrow \text{\textsc{Update-Approximation}}(\HH_n, \MM_n)$
\item Replace the surrogate by
\[
R^*(\mu, V\times W) := \|f - A_\mu u_{W,V}(\mu) - \bo_\mu p_{W,V}(\mu)\|_{\HH_\mathcal{N}'} + \|g - \bo^* u_{W,V}(\mu) \|_{\MM_\mathcal{N}'},
\]
see \cite{GV, GV2}.
\end{itemize}

Clearly, under the given assumptions \eqref{allHequiv}, \eqref{allMequiv}, $\mathcal{M}$ is compact. Denoting again by $p_n(\mu), u_n(\mu)$ the solution
components produced by the scheme {\bf DG-2} and comparing the greedy errors  \eqref{greedyerror-2}
\begin{equation*}
\sigma_n(\mathcal{M}):= \sup_{\mu\in \mathcal{P}}\big\{\|p(\mu)-p_n(\mu)\|_{\MM}
+ \|u(\mu)-u_n(\mu)\|_{\HH}\big\},
\end{equation*}
with the $n$-widths $d_n(\mathcal{M})_{\MM\times\HH}$  
and keeping Proposition \ref{propmB} 
 in mind,  we extend the results in \cite{GV,GV2,Rozza} as
follows.

\begin{cor}
The scheme {\bf DG-2} applied to \eqref{varprob} is under the above assumptions rate-optimal.
\end{cor}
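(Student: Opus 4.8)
The plan is to reduce the statement to Theorem~\ref{thrates} by checking its two hypotheses — a compact solution manifold and a tight residual-based surrogate produced along the greedy run — and then to control the growth of $\dim(\HH_n\times\MM_n)$ so that the stabilizing enrichments do not spoil rate-optimality with respect to $d_n(\mathcal{M})_{\MM\times\HH}$. First I would record that under the standing assumptions \eqref{allHequiv}, \eqref{allMequiv} all the spaces $\HH_\mu$, $\MM_\mu$ coincide, as sets, with fixed $\HH$, $\MM$ carrying parameter-independent reference norms, and that, by \eqref{bcont}, \eqref{ellip} and compactness of $\mathcal{P}$, the operator associated with \eqref{varprob} is a uniform isomorphism from $\HH\times\MM$ onto $\HH'\times\MM'$; hence $\mathcal{M}=\mathcal{M}_\HH\times\mathcal{M}_\MM$ from \eqref{solmanifold} is compact, the $n$-width benchmark is applicable, and the weak greedy machinery of \cite{BCDDPW,DPW} is available.

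Next I would establish tightness of the surrogate $R^*(\mu,\HH_n\times\MM_n)$ used by \textbf{DG-2}. Upon termination the stabilization loop \textsc{Update-inf-sup} delivers the uniform reduced inf-sup condition \eqref{infsupVW} with $\beta_{\HH_n,\MM_n}(\mu)\ge\zeta\beta_\mathcal{N}$, $\mu\in\mathcal{P}$, where $\beta_\mathcal{N}>0$ is the truth inf-sup constant from \eqref{eq:inf-sup}. Together with the (kernel-)ellipticity of $a_\mu$ in \eqref{ellip} — which for the classical problems of \cite{GV,GV2,Rozza} is coercivity on all of $\HH_\mu$, so that its discrete analogue on $\HH_n$ is automatic — Brezzi's theory (see \cite{BF}) yields the best approximation property for \emph{both} solution components of the reduced saddle point problem, i.e. $\|u(\mu)-u_n(\mu)\|_\HH+\|p(\mu)-p_n(\mu)\|_\MM$ is bounded by a $\mu$- and $n$-independent multiple of $\inf_{v\in\HH_n}\|u(\mu)-v\|_\HH+\inf_{q\in\MM_n}\|p(\mu)-q\|_\MM$. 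Combining this with the uniform mapping property of the saddle point operator gives a two-sided error–residual estimate; passing from the infinite-dimensional dual norms to the computable truth norms $\|\cdot\|_{\HH_\mathcal{N}'}$, $\|\cdot\|_{\MM_\mathcal{N}'}$ in the definition of $R^*$ costs only a factor close to one, provided the truth spaces are rich enough, exactly as in Remark~\ref{WMX} and \eqref{eq:disc-res}. Thus $R^*$ is tight with a finite condition $\kappa(R^*)$ depending only on the constants in \eqref{bcont}, \eqref{ellip}, on $\zeta$, $\beta_\mathcal{N}$ and on the truth-proximality threshold $\delta_\mathcal{N}$.

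With tightness established, Remark~\ref{remweakgreedy} shows that the snapshots $[u(\hat\mu_n),p(\hat\mu_n)]$ selected by the outer greedy step of \textbf{DG-2} obey the weak greedy inequality \eqref{gamma} for the compact manifold $\mathcal{M}=\mathcal{M}_\HH\times\mathcal{M}_\MM$, so Theorem~\ref{thrates} applies and yields the polynomial, respectively sub-exponential, rates \eqref{rates} for $\sigma_n(\mathcal{M})$ in terms of $d_n(\mathcal{M})_{\MM\times\HH}$. Finally, Proposition~\ref{propmB} (valid since \eqref{allHequiv} holds) shows that every call of \textsc{Update-inf-sup} only adds supremizers, all of which lie in the space $\hat\HH_n$ of dimension at most $m_B n$ from Remark~\ref{remufinite}; together with the at most $n$ primal and $n$ dual snapshots accumulated by the outer loop this bounds $\dim(\HH_n\times\MM_n)$ by a fixed multiple of $n$, so the rates survive up to a constant. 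The usual robustness argument of \cite{BCDDPW} then guarantees that all bounds persist up to the truth tolerance ${\rm tol}^*$ when snapshots are computed only within that accuracy.

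The hard part will be the clean verification of the BAP / tightness of $R^*$ in the general classical setting: one has to make sure that the inner loop produces a discrete inf-sup constant that is genuinely uniform in $\mu$ — which is where $\beta_\mathcal{N}>0$ and the affinity/$\e$-net considerations underlying Section~\ref{sec:greedy} enter — and that the discrete kernel-ellipticity required by Brezzi's lemma is not lost when $\HH_n$ is enlarged by supremizers and snapshots. For coercive $a_\mu$ (the situation in \cite{GV,GV2,Rozza}) this is immediate, but phrasing it precisely for the general hypotheses \eqref{bcont}, \eqref{ellip} is the delicate point; the remainder is bookkeeping on constants.
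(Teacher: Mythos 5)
Your proposal is correct and mirrors the argument that the paper assembles implicitly from the material preceding the corollary: compactness of $\mathcal{M}=\mathcal{M}_\HH\times\mathcal{M}_\MM$ under \eqref{allHequiv}, \eqref{allMequiv}; tightness of the combined residual surrogate via Brezzi/Babu\v{s}ka theory once \textsc{Update-inf-sup} guarantees the uniform reduced inf-sup constant $\zeta\beta_\mathcal{N}$; invocation of Theorem~\ref{thrates} through the weak-greedy inequality; and Proposition~\ref{propmB} (plus the extra snapshot $\hat u$ added per outer step) to keep $\dim(\HH_n\times\MM_n)\lesssim n$. You are somewhat more explicit than the paper about the discrete kernel-ellipticity needed for Brezzi's lemma, but since the targeted problems from \cite{GV,GV2,Rozza} have $a_\mu$ coercive on all of $\HH_\mu$ this is automatic, and your caveat is correctly placed rather than a gap.
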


\section{Concluding Remarks}
The generation of well-conditioned variational formulations for non-coercive or indefinite problems
has been proposed as the central ingredient of a general strategy for constructing tight surrogates for RBMs
also for such problem classes. 
In contrast to previous work,  well-conditioned  tight surrogates are obtained in a feasible way 
in all settings warranting a near-optimal performance of the corresponding RBM, which does not seem to be achievable 
with the aid of previously known concepts.   We emphasize that these concepts apply as well to space-time discretizations of unsteady problems
(see \cite{DHSW})
offering interesting perspectives with regard to robustly capturing long-term dynamics.
The presented application to two simple model problems is to be viewed
as a first proof of concept.   The two examples are to bring out some  essential obstructions and raise
issues that have so far not been addressed in this context.  In particular, they hint at the principal limitations of RBMs
in their standard formulations, especially regarding the smoothness of the parameter dependence.

\end{document}